\numberwithin{equation}{section}
\DeclareMathOperator{\bigo}{O}
\DeclareMathOperator{\Ai}{Ai}
\DeclareMathOperator{\Bi}{Bi}
\DeclareMathOperator{\WKB}{WKB}
\DeclareMathOperator{\Airy}{Airy}
\DeclareMathOperator{\Asymp}{Asymp}
\newtheorem{theorem}{Theorem}
\newtheorem{remark}[theorem]{Remark}
\newtheorem{proposition}[theorem]{Proposition}
\numberwithin{equation}{section}
\numberwithin{theorem}{section}
\newcommand{\R}{\mathbb{R}}
\newcommand{\C}{\mathbb{C}}
\newcommand{\abs}[1]{\left\vert#1\right\vert}
\newcommand{\Id}{\mathcal{I}}
\newcommand{\paren}[1]{\left(#1\right)}
\newcommand{\bracket}[1]{\left[#1\right]}
\newcommand{\set}[1]{\left\{#1\right\}}
\newcommand{\eps}{\epsilon}
\newcommand{\ximax}{\xi_{\max}}
\newcommand{\xmax}{x_{\max}}
\title[Singular profiles of a DNLS equation]{Local structure of singular profiles for a
  Derivative Nonlinear Schr\"odinger Equation}
\author[Cher]{Yuri Cher}
\address{Department of Mathematics, University of Toronto, Toronto ON,
  M5S 2E4 Canada,  \ ycher@math.toronto.edu}
\author[Simpson]{Gideon Simpson}
\address{Department of Mathematics, Drexel Unixersity, Philadelphia PA, 19104, \  simpson@math.drexel.edu}
 \thanks{G.S. is partially supported by NSF through grant number
   DMS-1409018.  Work reported here was run on hardware supported by Drexel's University Research Computing Facility}
\author[Sulem]{Catherine Sulem}
\address{Department of Mathematics, University of Toronto, Toronto ON,
  M5S 2E4 Canada,  \ sulem@math.toronto.edu}
 \thanks{C.S. is  partially  supported by NSERC through grant
  number 46179--13.}
\subjclass{35Q55, 37K40, 35Q51, 65M60}
\keywords{Derivative Nonlinear Schr\"odinger Equation, Blowing-up Solutions,
Boundary-value Problems}
\date{\today}
\begin{document}

\maketitle

\begin{abstract}
  The Derivative Nonlinear Schr\"odinger equation is an $L^2$-critical
  nonlinear dispersive equation model for Alfv\'en waves in space
  plasmas.  Recent numerical studies \cite{Liu:2013ej} on an
  $L^2$-supercritical extension of this equation provide evidence of
  finite time singularities. Near the singular point, the solution is
  described by a universal profile that solves a nonlinear elliptic
  eigenvalue problem depending only on the strength of the
  nonlinearity.  In the present work, we describe the deformation of
  the profile and its parameters near criticality, combining
  asymptotic analysis and numerical simulations.

\end{abstract}

\section{Introduction}
The derivative nonlinear Schr\"odinger (DNLS) equation

\begin{align}\label{eqn:DNLS_original}
  \begin{cases}
    i u_t + u_{xx} + i \left(|u|^2 u\right)_x =0 ,  \quad x\in \mathbb{R} \\
    u(x,0) = u_0(x).
  \end{cases}
\end{align}
is a canonical equation arising from the Hall-Magnetohydrodynamics
equations. It appears in the context of Alfv\'en waves propagating
along an ambient unidirectional magnetic field in a long wavelength
regime \cite{Sulem:1999kx}. More recently, it was used to model rogue waves and plasma
turbulence \cite{SanchezArriaga:2010jy}.  Under the gauge
transformation,
\begin{align}
  \label{e:gauge}
  \psi(x,t) = u(x,t) \exp\set{\frac{ i}{2} \int_{-\infty}^{x}
  |u(y,t)|^2 dy },
\end{align}
\eqref{eqn:DNLS_original} becomes
\begin{align}
  \label{eqn:DNLS_gauged}
  i \psi_t + \psi_{xx} +i |\psi|^2 \psi_x =0.
\end{align}
Eq. \eqref{eqn:DNLS_gauged} has appeared as a model for ultrashort optical
pulses, \cite{Anderson:1983wg,Moses:2007vv,Tzoar:1981vq}.

Solutions to the DNLS equation exist locally in time in $H^1(\R)$ and
they can be extended for all time if the initial conditions are
sufficiently small in $L^2$, namely $\|u_0\|_2 < \sqrt{2\pi}$
\cite{Hayashi:1992wl,Hayashi:1993vj}.  The global in time result
relies on two invariants of the equation,
\begin{align}
  \text{Mass:}\quad M[u] & \equiv \int \abs{u}^2 ~dx,\\
  \text{Hamiltonian:}\quad H[u]&\equiv \int \left(\abs{u_x}^2 + \tfrac{3}{2} \Im (
    % \bar{u}^{2} \partial_x (u^2)) \Big)~dx,
                                 |u|^2 u\bar{u}_x ) + \tfrac{1}{6} |u|^6 \right) dx
\end{align}
and the sharp constant in a Gagliardo-Nirenberg inequality.  Very
recently, Wu \cite{Wu:2014uc} %\cite{Wu:2013fu}
showed that the upper bound on the $L^2$-norm of the initial
conditions can be increased to $\| u_0 \|_2 < \sqrt{4\pi}$ using the
conservation of momentum
\begin{equation}\label{mom}
  \text{Momentum:}\quad I[u]\equiv \int  \left( \Im(\bar{u}u_x) - \tfrac{1}{2} 
    % \bar{u}^{2} \partial_x (u^2)) \Big)~dx,
    |u|^4\right) dx
\end{equation}
and a different Gagliardo-Nirenberg inequality.  As discussed below,
DNLS has a two-parameter family of solitary waves that decay
exponentially fast at infinity (bright solitons) as well as algebraic
solitons (lumps). It is interesting to notice that $\sqrt{4\pi}$ is
the $L^2$-norm of the lump soliton (see \eqref{LumpSigma} with
$\sigma=1$).  Furthermore, DNLS is completely integrable via the
inverse scattering transform \cite{Kaup1978} and has an infinite
number of conserved quantities. Recent works using the inverse
scattering method provide global solutions for initial conditions in a
spectrally determined (open) subset of weighted Sobolev spaces
containing a neighborhood of zero,
\cite{Pelinovsky:2015aa,Liu:2015aa}.  Global well-posedness for large
data remains an open problem.

Equation \eqref{eqn:DNLS_gauged}, along with
\eqref{eqn:DNLS_original}, is invariant to the scaling transformation
$ \psi \mapsto \psi_\lambda = \lambda^{-\frac{ 1}{2}}
\psi(\lambda^{-1} x, \lambda^{-2} t)$.
It is $L^2$-critical in the sense that
$\|\psi_\lambda\|_{L^2} = \|\psi\|_{L^2}$ and has the same scaling
properties as the focusing nonlinear Schr\"odinger equation,
\begin{equation}\label{NLS}
  i u_t + \Delta u + \abs{u}^{2\sigma} u = 0, \quad u: (x,t) \in  \R^{d}\times \R \to \C,
\end{equation}
with $d\sigma =2$.  However, it has very different structural
properties, such as the aforementioned integrability.  In contrast, it
is well known that for the $L^2$-critical and supercritical NLS
equations ($d\sigma\ge 2$), blowup occurs for initial conditions with
$L^2$-norm exceeding that of the ground state.

In the context of dispersive equations, the comparative study of
equations with critical and supercritical nonlinearities has been very
fruitful, \cite{MRS2011,Sulem:1999kx}. From this perspective, and to
gain additional insight into the properties of solutions to the DNLS
equations, a generalization of \eqref{eqn:DNLS_gauged} was introduced,
\begin{align}\label{eqn:gDNLS}
  i\psi_t + \psi_{xx} + i |\psi|^{2\sigma} \psi_x = 0,
\end{align}
that we will refer to as ``gDNLS'', \cite{Liu:2013cq,Liu:2013ej}. If
$\sigma >1$, the gDNLS equation is $L^2$-supercritical. 
Recent work  by Hayashi and Ozawa \cite{HO16} shows that it is locally well-posed in $H^1$ and globally well-posed if the initial conditions
are small enough; see  Ambrose and Simpson for related results on the
periodic problem, \cite{Ambrose:2015en}. Numerical
simulations performed in \cite{Liu:2013ej} strongly indicate that
\eqref{eqn:gDNLS} may present finite time singularities when
$\sigma >1$. More specifically, near the singular point, $(x^*, t^*)$,
the solution is locally approximated by
\begin{align}
  \label{e:blowup_asympt}
  \psi(x,t) \approx \left(\frac{1}{2 a (t^* - t)}\right)^{1/4\sigma} Q
  \left(\frac{x -x^*}{\sqrt{2 a (t^* - t)}} + \frac{b}{a}\right) e^{i
  \left(\theta + \frac{1}{2a} \ln \frac{t^*}{t^* - t} \right)}.
\end{align}
The blow-up profile $Q$ is a complex-valued function solving the
nonlinear eigenvalue problem
\begin{align}
  \label{QProfile}
  Q_{\xi \xi} - Q + ia \left(\tfrac{1}{2\sigma} Q+ \xi Q_\xi\right) - ib
  Q_\xi + i |Q|^{2\sigma} Q_\xi = 0.
\end{align}
The coefficient $b$ can be changed, or even eliminated by translating
the independent variable (as long as $a\ne 0$).  It was observed
numerically that the amplitude $|Q|$ of the profile has only one
maximum. In this work, we will choose the coefficient $b$ so that
$\max|Q| $ is at the origin.  The coefficients $a$, $b$, and the
function $Q$ all depend on $\sigma$, but were observed in the
simulations to be universal (up to simple scalings), in the sense that
the same values emerged, regardless of the initial conditions, for the
time dependent problem.

The local structure of $\psi$, near the singularity, can be extracted
using time dependent rescaling.  First, we note that gDNLS is
invariant under the transformation
$\psi \mapsto \psi_\lambda = \lambda^{-\frac{ 1}{2\sigma}}
\psi(\lambda^{-1} x, \lambda^{-2} t).  $
This motivates the introduction of the scaled dependent and
independent variables:
\begin{align}\label{eqn:DynamRescalingTransformation}
  \psi(x,t ) = \lambda(t)^{-\frac{ 1}{2\sigma}} v( \xi, \tau) , \quad
  \xi = \frac{x - x_0(t)}{\lambda(t)}, \quad \tau = \int_{0}^{t} \frac{dt^\prime}{\lambda^2(t^\prime)}.
\end{align}
The scaling factor $\lambda(t)$ is chosen to be proportional to
$\|\psi_x\|_{L^2} ^{-q}$ , $q= 2\sigma/(\sigma+1)$, while the shift
$x_0(t)$ is used to keep the bulk of the solution at the origin. The
rescaled function $v$ satisfies
\begin{align}\label{eqn:DynamRescaledDNLS}
  \begin{cases}
    i v_\tau + v_{\xi\xi} + i \alpha(\tau) \left(\frac{v}{2\sigma} + \xi v_\xi\right) - i\beta(\tau) v_\xi + i |v|^{2\sigma} v_\xi =0 \\
    \alpha = - \lambda \frac{d\lambda}{dt} ,\quad \beta = \lambda
    \frac{dx_0}{dt}.
  \end{cases}
\end{align}
For large $\tau$, it was observed that $v \sim e^{i C \tau} Q(\xi)$
and the parameters $\alpha (\tau)$, $\beta(\tau)$ tend to constant
values independent of the initial conditions. Substituting in
$v \sim e^{i C \tau} Q(\xi)$, canceling out the time harmonic piece,
and applying a simple rescaling turns \eqref{eqn:DynamRescaledDNLS}
into \eqref{QProfile}.  Under the transformation
\eqref{eqn:DynamRescalingTransformation} the conserved quantities
scale like
\[
M(\psi) = \lambda^{1-\frac{1}{\sigma}} M(v), \quad H(\psi) =
\lambda^{-1-\frac{1}{\sigma}} H(v), \quad I(\psi) =
\lambda^{-\frac{1}{\sigma}} I(v).
\]
In particular, since $\lambda \to 0$ as $\tau \to \infty$, we have
that $M(Q) =\infty$ while $I(Q) = H(Q) = 0$. The blow-up profile
equation is reminiscent of the profile describing the singularity
structure of radially symmetric $L^2$-supercritical NLS equations
\eqref{NLS}, with $\sigma d>2$ satisfying
\begin{align}
  S_{\xi\xi} + \tfrac{d-1}{\xi}S_\xi - S + i a\left(\tfrac{1}{\sigma} S+ \xi S_\xi\right) + |S|^{2\sigma}S = 0, \quad \xi>0,
\end{align}
derived in \cite{McPSS} and
studied in \cite{BCR}.
An asymptotic analysis near the critical value $\sigma d \to 2$
\cite{Sulem:1999kx} provides the behavior of the parameter $a$
\begin{align}\label{eqn:NLS_a_relation}
  \sigma d - 2 \propto a^{-1}e^{ -\frac{\pi}{a}}, \quad \sigma d \to 2,
\end{align}
and the profile $Q$ is asymptotically close to NLS ground state.  This
approach was key to predicting the generic blowup of solutions to
critical NLS and its $\log-\log$ correction.

The simulations performed in \cite{Liu:2013ej} suggested that, as the
nonlinearity $\sigma\to 1$, the coefficients $a$ and $b$, viewed as
functions of $\sigma$, behave as $a\to 0$ and $b\to b_0 >0$.  In the
present work, we perform a detailed asymptotic study complemented by
numerical results to describe the deformation of profile $Q$ and the
behaviour of the parameters $a(\sigma)$, $b(\sigma)$ in the
$\sigma \to 1$
limit.  We find that
\begin{align}\label{e:QSimLumpConclusion}
  Q(\xi) \sim L(\xi) \exp\left\{- i\left(\frac{a \xi^2}{4} - \frac{b \xi}{2} + \frac{1}{4} \int_{0}^{\xi} |Q|^{2}\right)\right\}
\end{align}
where $L(\xi) = \sqrt{\frac{8}{1+4\xi^2}}$ is the algebraic soliton of
the DNLS solving
\begin{equation}
  \label{e:cubic_lump}
  L_{\xi\xi} - L^3 + \tfrac{3}{16} L^5 =0.
\end{equation}
The parameters $a(\sigma)$ and $b(\sigma)$ behave like power laws of
$(\sigma - 1)$, namely:
\begin{subequations}
  \label{e:relations}
  \begin{align}
    \label{e:arelation} a &\propto (\sigma - 1)^{\gamma_a}, \ \gamma_a \approx 3.2,\\
    \label{e:brelation} 2-b &\propto (\sigma-1)^{\gamma_b}, \ \gamma_b = 2.
  \end{align}
\end{subequations}

Our paper is organized as follows. In Section 2, we describe basic
properties of the profile $Q$ solution of \eqref{QProfile}.  In
Section 3, we present numerical simulations of
\eqref{QProfile}. Motivated by these calculations, we analyze the
deformation of the profile as $\sigma\to 1$ and connect the behavior
of $Q$ at $\pm \infty$ using asymptotic methods in Section 4.  We
complement it by a careful analysis of the numerical data to predict
relations between the parameters $a$ and $b$ and $\sigma$ in the limit
$\sigma \rightarrow 1$.  In Section 5, we impose the vanishing momentum
condition to extract another relation between the parameters.
Concluding remarks are presented in Section 6.  Finally, in Appendix
A, we give the proof of Proposition \ref{prop:Asymptotics} on the
behavior of the profile $Q(\xi)$ for large $\xi$, and in Appendix B,
we provide details of the numerical methods, in particular how we deal
with solutions that decay slowly at infinity.

\section{Preliminary Results}
We recall basic properties of solutions to the profile equation
\eqref{QProfile}, give a preliminary discussion on the relation
between them and soliton solutions to the gDNLS equation.

\subsection{gDNLS Soliton Solutions}
The gDNLS equation \eqref{eqn:gDNLS} has a two-parameter family of
soliton solutions in the form
\begin{equation*}\label{e:gdnls_soliton_form}
  \psi_{\omega,c} (x,t) 
  = R_{\omega,c}(x-ct) \exp\set{i\Big(\omega t +\frac{c}{2} (x-ct)
    - \frac{1}{2\sigma+2} \int_{-\infty}^{x-ct}
    R^{2\sigma}_{\omega,c} \Big)},
\end{equation*}
where $R_{\omega,c}$, satisifies
\begin{equation}
  \label{e:gdnls_soliton}
  \partial_{\xi\xi}R_{\omega,c}  - \paren{\omega - \tfrac{c^2}{4}}
  R_{\omega,c} - \frac{c}{2}\abs{R_{\omega,c}}^{2\sigma}R_{\omega,c} +
  \tfrac{2\sigma+1}{(2\sigma+2)^2}\abs{R_{\omega,c}}^{4\sigma} R_{\omega,c}=0,
\end{equation}
subject to the boundary conditions $R_{\omega,c}\to 0$ as
$\xi \to \pm \infty$.  Eq. \eqref{e:gdnls_soliton} has smooth, real
valued, solutions expressed in terms of hyperbolic functions for all
$c$ and $\omega >c^2/4$. Without loss of generality, we fix
$\omega=1$, denote $c=b$, and suppress the subscripts.  The equation
for $R$ is then
\begin{align}
  \label{SolitonEqn}
  R_{\xi\xi} - \left(1 - \tfrac{ b^2}{4}\right) R - \frac{b}{2}
  R^{2\sigma+1} + \tfrac{2\sigma+1}{(2\sigma+2)^2} R^{4\sigma+1} = 0.
\end{align}
For $\abs{b}<2$, the solutions are smooth and exponentially decaying,
\begin{align}
  \label{BrightSigma}
  R = B_\sigma \equiv \left(\frac{ (\sigma+1)(4-b^2)}{2(\cosh (\sigma
  \sqrt{4- b^2} \xi) - \frac{b}{2})}\right)^\frac{1}{2\sigma}.
\end{align}
We refer to these as ``bright'' soliton solutions.  In the limit
$b\nearrow 2$, another solution emerges, the algebraic ``lump''
soliton
\begin{align}
  \label{LumpSigma}
  R = L_\sigma \equiv \left(\frac{4 (\sigma+1)}{1+4\sigma^2
  \xi^2}\right)^\frac{1}{2\sigma}.
\end{align}
Both types of solitons play roles in our study of the blowup profile.

\subsection{Properties of the blow-up profile}

We recall properties of solutions to the profile equation
\eqref{QProfile}.  Details of the proofs can be found in
\cite{Liu:2013ej}.
\begin{proposition}
  \label{prop:ZeroEnergy} Let $Q$ be a classical bounded solution of
  \eqref{QProfile} with $a>0$, such that $Q_\xi \in L^2$ and
  $Q \in L^{4\sigma+2}$. Then its energy and momentum vanish:
  \begin{subequations}
    \begin{align}
      \label{ZeroEnergyQ}
      H(Q) &\equiv \int_\mathbb{R} \left(|Q_\xi|^2 + \tfrac{1}{\sigma+1}|Q|^{2\sigma}
             \Im (\bar{Q} Q_\xi)\right)d\xi = 0\\
      \label{ZeroMomentumQ}
      I(Q) &\equiv \Im \int_\mathbb{R} \bar{Q} Q_\xi d\xi =0.
    \end{align}
  \end{subequations}
\end{proposition}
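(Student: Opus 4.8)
The plan is to derive the two vanishing identities from three ``virial''-type relations obtained by multiplying the profile equation \eqref{QProfile} by $\bar Q_\xi$ and by $\bar Q$ and separating real and imaginary parts. What makes this less routine than a textbook Pohozaev computation is that $M(Q)=\int_{\R}|Q|^2=\infty$, so several of the truncated integrals and boundary terms that appear are individually infinite and must be carried along and cancelled against one another. I would begin by fixing a sequence $R_n\to\infty$ along which $Q(\pm R_n)$, $Q_\xi(\pm R_n)$ and $R_n|Q_\xi(\pm R_n)|^2$ all tend to $0$; such a sequence exists because $Q_\xi\in L^2$ and $Q\in L^{4\sigma+2}$ force $|Q_\xi(\xi)|^2+|Q(\xi)|^{4\sigma+2}$, together with its reflection, to lie in $L^1(\R)$, so that $\xi$ times this sum has vanishing $\liminf$ as $\xi\to\infty$. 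Along this sequence every boundary term below vanishes, with the single exception of $\xi|Q|^2$.

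For the momentum \eqref{ZeroMomentumQ}, I would multiply \eqref{QProfile} by $\bar Q_\xi$ and take the real part: the terms $ia\xi Q_\xi$, $-ibQ_\xi$ and $i|Q|^{2\sigma}Q_\xi$ each contribute a purely imaginary multiple of $|Q_\xi|^2$ and drop out, while $\bar Q_\xi Q_{\xi\xi}-Q\bar Q_\xi$ is exact, so the identity collapses to
\[
\frac{d}{d\xi}\Big(\tfrac12|Q_\xi|^2-\tfrac12|Q|^2\Big)=-\frac{a}{2\sigma}\,\Im(\bar QQ_\xi).
\]
Integrating over $[-R_n,R_n]$ kills the left-hand side in the limit, which simultaneously shows that $I(Q)$ converges as an improper integral and that $\frac{a}{2\sigma}I(Q)=0$; since $a>0$ this gives $I(Q)=0$.

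For the energy \eqref{ZeroEnergyQ} I would combine two further relations. Taking the real part of $\bar Q$ times \eqref{QProfile} and using $\Re(\bar QQ_{\xi\xi})=\big(\Re(\bar QQ_\xi)\big)'-|Q_\xi|^2$ expresses $|Q_\xi|^2$ in terms of $|Q|^2$, $\xi\,\Im(\bar QQ_\xi)$, $\Im(\bar QQ_\xi)$, $|Q|^{2\sigma}\Im(\bar QQ_\xi)$ and a total derivative; I would then use the momentum identity to rewrite $\Im(\bar QQ_\xi)$ and, via $\xi f'=(\xi f)'-f$, also $\xi\,\Im(\bar QQ_\xi)$ as total derivatives plus multiples of $|Q_\xi|^2$ and $|Q|^2$. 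Integrating over $[-R_n,R_n]$ and using $\int_{-R_n}^{R_n}\Im(\bar QQ_\xi)\to I(Q)=0$, I expect to land on
\[
(1+\sigma)\,H(Q)=\lim_{n\to\infty}\Big((\sigma-1)\,M_{R_n}-\sigma\,\big[\xi|Q|^2\big]_{-R_n}^{R_n}\Big),\qquad M_{R_n}:=\int_{-R_n}^{R_n}|Q|^2 .
\]
Meanwhile the imaginary part of $\bar Q$ times \eqref{QProfile} produces the conserved current
\[
\frac{d}{d\xi}\Big(\Im(\bar QQ_\xi)+\tfrac a2\xi|Q|^2-\tfrac b2|Q|^2+\tfrac1{2(\sigma+1)}|Q|^{2\sigma+2}\Big)=\frac{a(\sigma-1)}{2\sigma}\,|Q|^2 ,
\]
whose integral over $[-R_n,R_n]$ reads $\big[\xi|Q|^2\big]_{-R_n}^{R_n}=\tfrac{\sigma-1}{\sigma}M_{R_n}+o(1)$. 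Feeding this into the previous display, the divergent mass and the divergent boundary term cancel exactly, leaving $(1+\sigma)H(Q)=0$ and therefore $H(Q)=0$.

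The step I expect to be the main obstacle is precisely this infinite-mass bookkeeping: since $\int|Q|^2$ and $\int\xi\,\Im(\bar QQ_\xi)$ both diverge, one cannot integrate $\bar Q$ times \eqref{QProfile} over $\R$ outright, and the computation only closes once the conserved current is used to pin $\big[\xi|Q|^2\big]$ and $M_{R_n}$ to the same divergence rate. The ancillary points --- finiteness of $H(Q)$ (immediate from $Q_\xi\in L^2$, $Q\in L^{4\sigma+2}$ and Cauchy--Schwarz), existence of the sequence $R_n$, and the decay of $Q$ and $Q_\xi$ along it --- are routine; alternatively one may simply invoke the large-$\xi$ asymptotics of Proposition \ref{prop:Asymptotics} to justify the vanishing of the boundary terms. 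Finally, I would remark that $H(Q)=I(Q)=0$ can also be anticipated heuristically from the scaling laws $H(\psi)=\lambda^{-1-1/\sigma}H(v)$, $I(\psi)=\lambda^{-1/\sigma}I(v)$ together with $\lambda\to 0$ recorded in the introduction, but the multiplier argument above is what makes it rigorous and intrinsic to \eqref{QProfile}.
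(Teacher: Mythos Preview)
Your argument for $I(Q)=0$ is exactly the paper's: multiply \eqref{QProfile} by $\bar Q_\xi$, take the real part, observe that only the $\tfrac{ia}{2\sigma}Q$ term survives apart from the exact derivative $\tfrac{d}{d\xi}\big(\tfrac12|Q_\xi|^2-\tfrac12|Q|^2\big)$, and integrate.

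Your argument for $H(Q)=0$ is correct but takes a genuinely different route from the paper. You multiply by $\bar Q$, which immediately brings in the divergent mass $\int|Q|^2$, and then repair this by deriving the current identity
\[
\partial_\xi\Big(\Im(\bar QQ_\xi)+\tfrac a2\xi|Q|^2-\tfrac b2|Q|^2+\tfrac{1}{2(\sigma+1)}|Q|^{2\sigma+2}\Big)=\tfrac{a(\sigma-1)}{2\sigma}|Q|^2
\]
and using it to show that $(\sigma-1)M_{R_n}$ and $\sigma\,[\xi|Q|^2]_{-R_n}^{R_n}$ cancel exactly. This works, and your handling of the sequence $R_n$ is sound. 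The paper, however, sidesteps the infinite-mass bookkeeping entirely by choosing a different multiplier: it multiplies \eqref{QProfile} by $\bar Q_{\xi\xi}$ and takes the \emph{imaginary} part. After integration this yields
\[
-a\,\tfrac{\sigma+1}{2\sigma}\int|Q_\xi|^2+\int|Q|^{2\sigma}\Re(Q_{\xi\xi}\bar Q_\xi)=0,
\]
and substituting $Q_{\xi\xi}$ from \eqref{QProfile} into the second integral gives $\int|Q|^{2\sigma}\Re(Q_{\xi\xi}\bar Q_\xi)=-\tfrac{a}{2\sigma}\int|Q|^{2\sigma}\Im(\bar QQ_\xi)$ directly. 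Dividing by $a\neq0$ produces $(\sigma+1)H(Q)=0$ in two lines, with every integral finite from the start under the stated hypotheses. So the paper's trick is the choice of multiplier $\bar Q_{\xi\xi}$: it is tuned so that $|Q|^2$ never appears, whereas your multiplier $\bar Q$ forces you to introduce and then cancel it. What your approach buys is that it makes explicit the current identity above (essentially \eqref{eqn:Psi_PhaseAmp} in disguise), which is itself useful later in the paper; what the paper's approach buys is a proof that is shorter and requires no care about divergent boundary terms.
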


\begin{proof}
  We multiply \eqref{QProfile} by $\bar{Q}_{\xi\xi}$ and integrate the
  imaginary part of the equation to get
  \begin{align*}
    -a \left(\tfrac{ \sigma+1}{2\sigma}\right) \int
    |Q_\xi|^2 + \int |Q|^{2\sigma} \Re(Q_{\xi\xi}
    \bar{Q}_\xi) = 0.
  \end{align*}
  In the second term we replace $Q_{\xi\xi}$ using \eqref{QProfile}
  leading to
  \begin{align*}
    \int  |Q|^{2\sigma}\Re (Q_{\xi\xi} \bar{Q}_\xi) =
    -\frac{ a}{2\sigma} \int |Q|^{2\sigma} \Im (\bar{Q}
    Q_\xi).
  \end{align*}
  If $a \neq 0$ the identity \eqref{ZeroEnergyQ} follows.
  
  Similarly, multiplying \eqref{QProfile} by $\bar{Q}_\xi$ and taking
  the real part of the equation gives
  \begin{align*}
    \partial_\xi |Q_\xi|^2 + \partial_\xi |Q|^2 + \tfrac{a}{\sigma} \Im \left(\bar{Q}Q_\xi\right) = 0.
  \end{align*}
  If $a >0$, integrating over the real line gives
  \eqref{ZeroMomentumQ}.
\end{proof}

\begin{proposition}
  If $Q$ is a solution of \eqref{QProfile} with $a>0$ and $\sigma >1$,
  and $Q \in H^1 \bigcap L^{2\sigma+2}$, then $Q \equiv 0$.
\end{proposition}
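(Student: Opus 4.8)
The plan is to extract one further integral identity from \eqref{QProfile}, complementing Proposition \ref{prop:ZeroEnergy}, by multiplying the equation by $\bar Q$ and integrating over $\R$. The decisive observation is that, after integration by parts, the \emph{imaginary} part of the resulting identity isolates a single term proportional to $\int_\R\abs{Q}^2$, whose coefficient $a\paren{\tfrac1{2\sigma}-\tfrac12}$ is nonzero precisely in the regime $a>0$, $\sigma>1$ under consideration.

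Carrying this out, multiply \eqref{QProfile} by $\bar Q$ and integrate. One has $\int Q_{\xi\xi}\bar Q = -\int\abs{Q_\xi}^2$, and, using $\Re(\bar Q Q_\xi)=\tfrac12\partial_\xi\abs{Q}^2$,
\begin{align*}
  \int_\R \xi\,\bar Q Q_\xi\,d\xi &= -\tfrac12\int_\R\abs{Q}^2\,d\xi + i\int_\R\xi\,\Im(\bar Q Q_\xi)\,d\xi, \\
  \int_\R\abs{Q}^{2\sigma}\bar Q Q_\xi\,d\xi &= i\int_\R\abs{Q}^{2\sigma}\Im(\bar Q Q_\xi)\,d\xi,
\end{align*}
the second identity because $\int\abs{Q}^{2\sigma}\partial_\xi\abs{Q}^2$ is a constant multiple of $\int\partial_\xi\abs{Q}^{2\sigma+2}=0$. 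Hence the terms $i\abs{Q}^{2\sigma}Q_\xi$ and $-ibQ_\xi$ contribute only real quantities to the identity (the latter contributes $b\,I(Q)$, which in any case vanishes by \eqref{ZeroMomentumQ}), and taking imaginary parts leaves
\begin{equation*}
  \tfrac{a}{2\sigma}\int_\R\abs{Q}^2\,d\xi - \tfrac{a}{2}\int_\R\abs{Q}^2\,d\xi = 0 .
\end{equation*}
Since $a>0$ and $\tfrac1{2\sigma}-\tfrac12=\tfrac{1-\sigma}{2\sigma}<0$ when $\sigma>1$, this forces $\int_\R\abs{Q}^2=0$, i.e. $Q\equiv 0$.

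The only point needing care is the justification of the integrations by parts and the absolute convergence of the integrals carrying the weight $\xi$: that $\xi\,\bar Q Q_\xi\in L^1(\R)$ and that the boundary terms $\bracket{\bar Q Q_\xi}$ and $\bracket{\xi\abs{Q}^2}$ at $\xi=\pm\infty$ vanish (the boundary term $\bracket{\abs{Q}^{2\sigma+2}}$ is immediate from $H^1(\R)\hookrightarrow L^\infty(\R)$, which gives $Q(\xi)\to0$). I would deduce these from the asymptotic description of solutions of \eqref{QProfile} obtained in Section~4 and Appendix~A (Proposition \ref{prop:Asymptotics}): for an $L^2$ solution, the decay of $Q$ and $Q_\xi$ there is far stronger than what is needed. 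A softer argument using only $Q,Q_\xi\in L^2(\R)$ — integrating over $[-R,R]$ and sending the endpoints to $\pm\infty$ along sequences on which $\abs{\xi}\abs{Q(\xi)}^2\to0$ — also works, with a little extra care in the choice of sequences. I expect this bookkeeping to be the only real obstacle, and it is routine; the conclusion is in any case consistent with the discussion following \eqref{eqn:DynamRescaledDNLS}, where genuine blow-up profiles have $M(Q)=\infty$ and hence lie outside $L^2$, the proposition sharpening this to the statement that no nonzero $H^1$ profile exists when $\sigma>1$.
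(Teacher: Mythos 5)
Your argument is correct and is essentially the standard one: the paper does not reproduce a proof of this proposition (it defers to \cite{Liu:2013ej}), and the proof there is exactly what you propose — multiply \eqref{QProfile} by $\bar Q$, take the imaginary part, integrate, and observe that after integration by parts everything cancels except $a\paren{\tfrac{1}{2\sigma}-\tfrac12}\int_\R\abs{Q}^2$, which forces $Q\equiv 0$ when $a>0$, $\sigma>1$. One caveat on your bookkeeping: Proposition \ref{prop:Asymptotics} is not the right tool to justify the boundary terms, since it describes zero-energy solutions decaying like $\abs{\xi}^{-\frac{1}{2\sigma}}$ (hence not in $L^2$ for $\sigma>1$) and is an ansatz-based expansion rather than a decay statement applicable to a hypothetical $H^1$ solution. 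Your ``softer'' fallback is the right route and does close the argument: work on $[-R,R]$, note the boundary terms $\abs{Q}^2$ and $\abs{Q}^{2\sigma+2}$ vanish because $H^1(\R)$ functions tend to zero at infinity, and choose a single sequence $R_n\to\infty$ along which both $R_n\abs{Q(\pm R_n)}^2$ and $\abs{\bar Q Q_\xi(\pm R_n)}$ tend to zero — such a sequence exists because $\abs{Q}^2$ and $\bar Q Q_\xi$ lie in $L^1(\R)$ (apply the $\liminf R\,h(R)=0$ argument to $h(R)=\abs{Q(R)}^2+\abs{Q(-R)}^2+\abs{\bar Q Q_\xi(R)}+\abs{\bar Q Q_\xi(-R)}$); note also that in this truncated, imaginary-part-first version the questionable integral $\int\xi\,\Im(\bar Q Q_\xi)$ never appears, so no absolute convergence with the weight $\xi$ is actually needed.
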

Consequently, there are no nontrivial solutions that belong to
$H^1 \cap L^{2\sigma+2}$.  The behaviour of solutions to
\eqref{QProfile} as $\xi \rightarrow \pm \infty$ can be written as
$Q = A_{\pm} Q_1 + B_{\pm} Q_2$ where $Q_1$ and $Q_2$ behave at
leading order as
$Q_1 \approx |\xi|^{-\frac{ 1}{2\sigma} - \frac{ i}{a}}$ and
$Q_2 \approx e^{-i \frac{a \xi^2}{2}} |\xi|^{1-\frac{1}{2\sigma} +
  \frac{i}{a}}$.
Note that for $\sigma > 1$, $ a>0$,  $Q_1 \notin L^2$ and
$Q_{2_\xi} \notin L^2$.  We are interested in solutions of
\eqref{QProfile} with $B_{\pm} =0$, {\it i.e.} those that behave like
$Q_1$ as $|\xi|\rightarrow \infty$. These are the types of profiles
which correspond to finite energy solutions to the gDNLS equation.
\begin{proposition}
  \label{prop:Asymptotics}
  The large $\xi$ behaviour of zero-energy solutions to
  \eqref{QProfile} is
  \begin{align}\label{eqn:Asymptotics2ndOrder}
    Q = A_{\pm} Q_1 \approx A_\pm |\xi|^{-\frac{ 1}{2\sigma}} \left(1 \pm \frac{b}{2a\sigma |\xi|}\right) e^{-\frac{ i}{a} \left(\ln |\xi| \pm \frac{b}{a|\xi|}\right)}, \quad \xi \to \pm \infty.
  \end{align}
\end{proposition}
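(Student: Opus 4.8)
The plan is to treat \eqref{QProfile}, for $|\xi|$ large, as a perturbation of the first-order ``dominant balance''
\[
  -\,Q + i a\Big(\tfrac{1}{2\sigma}Q + \xi Q_\xi\Big) = 0 ,
\]
whose solution $Q \sim |\xi|^{g_0}$ with $g_0 := -\tfrac1{2\sigma} - \tfrac{i}{a}$ (the root of $i a\,g_0 = 1 - \tfrac{ia}{2\sigma}$) is the leading term of $Q_1$. I would start from the fact, available from the discussion preceding the Proposition, that for the solutions of interest (those with $B_\pm = 0$) one has $Q(\xi) = A_\pm|\xi|^{g_0}(1+o(1))$ and $Q_\xi(\xi) = g_0 A_\pm|\xi|^{g_0-1}(1+o(1))$ as $\xi\to\pm\infty$. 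In particular $|Q(\xi)|^{2\sigma} = O(|\xi|^{-1})$, so both the nonlinear term $i|Q|^{2\sigma}Q_\xi$ and the dispersive term $Q_{\xi\xi}$ are $O(|\xi|^{-2}|Q|)$, i.e.\ of \emph{relative} size $O(|\xi|^{-2})$; the first correction, of relative size $O(|\xi|^{-1})$, is therefore produced entirely by the drift term $-i b Q_\xi$.

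The main step is to peel off the leading behaviour by setting $Q(\xi) = |\xi|^{g_0}P(\xi)$, so that $P$ is bounded with $P(\xi)\to A_\pm$. Because $|\xi|^{g_0}$ solves the dominant balance, the coefficients of $P$ of sizes $O(\xi)$ and $O(1)$ cancel identically, and (writing things for $\xi>0$, the case $\xi<0$ being the same) $P$ satisfies a second-order equation of the schematic form
\begin{equation*}
  P_{\xi\xi} + \Big( i a \xi + \tfrac{2 g_0}{\xi} - i b + i|Q|^{2\sigma}\Big) P_\xi
  + \Big( \tfrac{g_0^{2} - g_0}{\xi^{2}} - \tfrac{i b g_0}{\xi} + i |Q|^{2\sigma}\tfrac{g_0}{\xi}\Big) P = 0 .
\end{equation*}
The structural point is that $P$ itself is multiplied only by coefficients of size $O(|\xi|^{-1})$, the slowest being $-i b g_0/\xi$; this is exactly the source of the $b$-dependent correction.

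Setting $u = P_\xi$, the first line of the displayed equation is a scalar linear first-order ODE for $u$ whose integrating factor $\mu(\xi) = \exp\!\big(\int^{\xi}(ias + 2g_0/s - ib + i|Q(s)|^{2\sigma})\,ds\big)$ carries the oscillatory factor $e^{ia\xi^{2}/2}$, with $|\mu(\xi)|\asymp|\xi|^{-1/\sigma}$. Requiring $P$ bounded (equivalently $B_\pm=0$: the companion solution $Q_2 \sim e^{-ia\xi^{2}/2}|\xi|^{\,1-1/2\sigma+i/a}$ corresponds to a $P$ that grows like $|\xi|$) converts the equation into the coupled Volterra system
\begin{gather*}
  u(\xi) = -\,\mu(\xi)^{-1}\int_{\xi}^{\infty} \mu(s)\Big(\tfrac{g_0^{2}-g_0}{s^{2}} - \tfrac{i b g_0}{s} + \cdots\Big)P(s)\,ds ,\\
  P(\xi) = A_+ - \int_{\xi}^{\infty} u(s)\,ds ,
\end{gather*}
which I would solve by a contraction mapping in the class $\{\,P-A_+ = O(1/|\xi|),\ u = O(1/|\xi|^{2})\,\}$ on a half-line $[\xi_0,\infty)$ (resp.\ $(-\infty,-\xi_0]$). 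The one genuinely nontrivial estimate is that the weighted oscillatory integrals $\int_{\xi}^{\infty}\mu(s)\,s^{-k}\,ds$ are $O\!\big(\mu(\xi)\,|\xi|^{-k-1}\big)$ for $k=1,2$, which follows from a single integration by parts against the quadratic phase; the non-smooth coefficient $|Q|^{2\sigma}$ is harmless, entering the estimates only through the bound $|Q|^{2\sigma}=O(|\xi|^{-1})$.

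With the fixed point in hand, the dominant balance $i a\xi\,u \approx i b g_0\,\xi^{-1}A_\pm$ gives $u(\xi) = \tfrac{b g_0}{a}A_\pm\,\xi^{-2}+o(\xi^{-2})$, and integrating, $P(\xi) = A_\pm\big(1 \mp \tfrac{b g_0}{a}\,|\xi|^{-1}\big)+o(|\xi|^{-1})$, the sign alternating with the end because $\int^{\xi}s^{-2}\,ds$ is evaluated at arguments of opposite sign at $\pm\infty$. Substituting $g_0 = -\tfrac1{2\sigma}-\tfrac{i}{a}$, the correction $\mp\tfrac{b g_0}{a}|\xi|^{-1}$ splits into a real part $\propto \tfrac{b}{a\sigma|\xi|}$ and an imaginary part $\propto \tfrac{b}{a^{2}|\xi|}$; collecting the real part into an amplitude factor, absorbing the imaginary part into a small phase, and multiplying back by $|\xi|^{g_0} = |\xi|^{-1/2\sigma}e^{-\frac{i}{a}\ln|\xi|}$ yields \eqref{eqn:Asymptotics2ndOrder}. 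I expect the crux to be precisely the isolation of the recessive mode in the previous step, i.e.\ showing that the $1/|\xi|$ correction to $Q_1$ is clean and not contaminated by $O(1)$ oscillatory contributions coming from the $Q_2$ direction; this is what forces the stationary-phase bounds on the $e^{ias^{2}/2}$-weighted integrals and the choice of $\pm\infty$ as base point of integration. Once those are in place, the remaining algebra is routine bookkeeping.
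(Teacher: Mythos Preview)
Your approach is correct and genuinely different from the paper's. The paper (Appendix~A) extracts a \emph{quadratic} phase by writing $Q = XZ$ with $X = \exp\{-i(\tfrac{a\xi^2}{4}-\tfrac{b\xi}{2}+\tfrac12\int|Z|^{2\sigma})\}$, then decomposes $Z = Ae^{i\phi}$ into real amplitude and phase, \emph{postulates} the form of the next-order corrections in $A$ and $\theta=\phi_\xi$, and checks that the remainders are consistent at order $\bigo(\xi^{-2})$---a formal ansatz-verification rather than a construction. You instead peel off the complex power $|\xi|^{g_0}$ directly, reduce to a second-order ODE for the bounded remainder $P$, and recast it as a coupled Volterra system closed by a contraction argument with stationary-phase bounds on the $e^{ia s^2/2}$-weighted integrals. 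Your route is more work but is constructive and, in principle, rigorous: it actually \emph{produces} the recessive solution and isolates it from $Q_2$ rather than assuming the expansion and checking balances. The paper's route is algebraically lighter and makes the separate roles of the amplitude and phase corrections transparent, at the price of being only a consistency argument. One small point: in your Volterra system the coefficient $|Q|^{2\sigma}=|P|^{2\sigma}|\xi|^{-1}$ is not a given datum but depends on $P$, so the fixed point is mildly nonlinear; this is harmless at the order you compute (the nonlinear contribution enters only at relative order $|\xi|^{-2}$), but should be said explicitly when you close the contraction. Your final answer agrees with the appendix's concluding display $Q\approx A_\pm|\xi|^{-1/2\sigma}(1+\tfrac{b}{2\sigma a\xi})\exp\{-\tfrac{i}{a}\ln|\xi|+\tfrac{ib}{a^2\xi}\}$; the apparent sign mismatch with \eqref{eqn:Asymptotics2ndOrder} in the phase correction is a typo in the Proposition's statement, not an error on your part.
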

This result is a slight refinement of Proposition 4.1 of
\cite{Liu:2013ej}.  A proof is presented in Appendix
\ref{App:Asymptotics}.

\subsection{Phase--Amplitude Decomposition}

To further analyse the profile equation, we introduce the function $P$
defined by
\begin{equation}
  \label{e:QP_relation}
  Q = P \exp\left\{-i\left(\frac{a\xi^2}{4} - \frac{b\xi}{2} +
      \frac{1}{2\sigma+2} \int_{0}^{\xi} |Q|^{2\sigma}\right)\right\}.
\end{equation}
We have extracted a portion of the phase corresponding to the
gDNLS soliton as well as a quadratic part as is often the case in the
study of NLS equations.  $P$ is complex valued and solves
\begin{equation}
  \label{PEquation}
  \begin{split}
    &P_{\xi\xi} + \left(\tfrac{1}{4}(a\xi-b)^2 -1\right) P - i
    \tfrac{a
      (\sigma-1)}{2\sigma}P + \tfrac{1}{2}(a\xi - b) |P|^{2\sigma} P \\
    &\quad+ \tfrac{2\sigma+1}{(2\sigma+2)^2} |P|^{4\sigma} P - \tfrac{
      \sigma}{\sigma+1} |P|^{2(\sigma-1)} \Im(\bar{P} P_\xi) P =0.
  \end{split}
\end{equation}
When $a=0$, we can also assume that $P$ is real valued, and
\eqref{PEquation} becomes \eqref{SolitonEqn}. This illustrates the
connection between the blowup profile and the soliton.  When
$a\neq 0$, the function $P$ is complex valued and it is useful to
decompose it into a real valued amplitude and phase.  Setting
$P = A e^{i\phi}$, we observe that only the derivative of the phase
appears in the equations. Therefore, letting $\psi = \phi_\xi$, we
have the system:
\begin{subequations}
  \begin{gather}
    \label{eqn:Amp_PhaseAmp}
    \begin{split}
      A_{\xi\xi}& + \left(\tfrac{1}{4} (a\xi -b)^2 - 1 -
        \psi^2\right)A + \left( \tfrac{1}{2} (a\xi - b) - \tfrac{
          \sigma}{\sigma+1}
        \psi\right)A^{2\sigma +1} \\
      &\qquad + \tfrac{2\sigma+1}{(2\sigma+2)^2}A^{4\sigma+1} = 0,
    \end{split}\\
    \label{eqn:Psi_PhaseAmp}
    \psi_\xi A + 2\psi A_{\xi} = \tfrac{a(\sigma-1)}{2\sigma} A.
  \end{gather}
\end{subequations}
\eqref{eqn:Psi_PhaseAmp} can be written as
\[
\left(A^2 \psi\right)_\xi = \tfrac{a(\sigma-1)}{2\sigma}A^2
\]
leading to an expression of $\psi$ in terms of $A^2$
\begin{align}\label{eqn:Psi(A)}
  \psi(\xi) = \frac{\psi(0)A^2(0)}{ A^2(\xi)} +
  \frac{a(\sigma-1)}{2\sigma A^2(\xi)} \int_{0}^{\xi}
  A^2(\eta)d\eta.
\end{align}
Alternatively, writing \eqref{eqn:Psi_PhaseAmp} as
\[
\frac{\left(A^2 \psi\right)_\xi}{A^2 \psi} =
\frac{a(\sigma-1)}{2\sigma \psi},
\]
we have the relation
\begin{align}\label{eqn:A^2(psi)}
  A^2(\xi) = \frac{C^2}{|\psi|} \exp\set{\frac{a(\sigma-1)}{2\sigma}
  \int_{\xi_0}^{\xi} \frac{d\eta}{\psi(\eta)}}.
\end{align}
$C^2 = A^2(\xi_0) |\psi(\xi_0)|$ is a constant of integration. In both
cases, the unknown constants depend on $\sigma$.  For reference, the
derivatives of the phase of $Q$ and that of $P$ are related as
\begin{equation}
  \label{e:phase_relation}
  \theta_\xi = \psi - \frac{a\xi-b}{2} - \frac{1}{2\sigma+2}|Q|^{2\sigma}
\end{equation}

\section{Numerical simulation of the profile equation}
  
Here, we briefly summarize our approach to solving \eqref{QProfile}
and make some preliminary obsevations on the profiles.

\subsection{Solvability and Boundary Conditions}

To solve for the profile $Q$, and the parameters $a$ and $b$, it is
necessary to impose a sufficient number of boundary conditions and
solvability conditions.  These are as follows : (i) \ Since the
profile equation is invariant under multiplication by a constant
phase, we assume $Q(0) \in \mathbb{R}$; (ii) Proposition
\ref{prop:Asymptotics} gives a far-field, asymptotically linear
approximation of $Q$ which will be used to construct Robin boundary
conditions, eliminating the constants $A_+$ and $A_-$; (iii) The
parameter $b$ can be changed by a translation in $\xi$.  Under the
assumption that the profile has a unimodal amplitude, which is
consistent with numerical observations, we assume that the maximum of
the amplitude occurs at the origin.  We express this internal boundary
condition as $|Q|_\xi (0) = 0$.

Preliminary numerical simulations of \eqref{QProfile} subject to above
boundary conditions were performed in \cite{Liu:2013ej}. It was
observed that the amplitude $A$ of $Q$ is highly asymmetric and that
the parameter $a$ tends rapidly to zero as $\sigma \rightarrow 1$. In
the next section, we improve these numerical results and make
observations that will guide us in the asymptotic analysis near
$\sigma = 1$. In particular, we identify regions of validity of
different approximations and corresponding turning points.

\subsection{Numerical methods}

The blow-up profile is computed for a sequence of values of $\sigma$
approaching one by continuation. We use a second-order finite
difference scheme, together with a Newton solver to solve the system
for a given value of $\sigma$. Each successful computation is used as
a starting guess for the next smaller value of $\sigma$ in the
sequence. We also make use of Richardson extrapolation to improve upon
computed quantities, such as the parameters $a$ and $b$.  We computed
the solution over a range of $\sigma$ from $\sigma=2$ down to $1.044$,
below which our solver struggled.  We report quantities computed from
this interval, $\sigma \in [1.044, 2]$.  Details on
the numerical methods can be found in Appendix \ref{a:numerics}.

\subsection{Numerical Observations}\label{sect:Observations}

Figure \ref{fig:3dLinProfile} shows the amplitude $|Q|$
near the origin for several values of $\sigma $ close to 1,
computed with the above method. As mentioned before, we see that $|Q|$
is highly asymmetric, decaying much faster for $\xi >0$ than for
$\xi <0$.
\begin{figure}[h]
  \centerline{\includegraphics[height = 8cm]{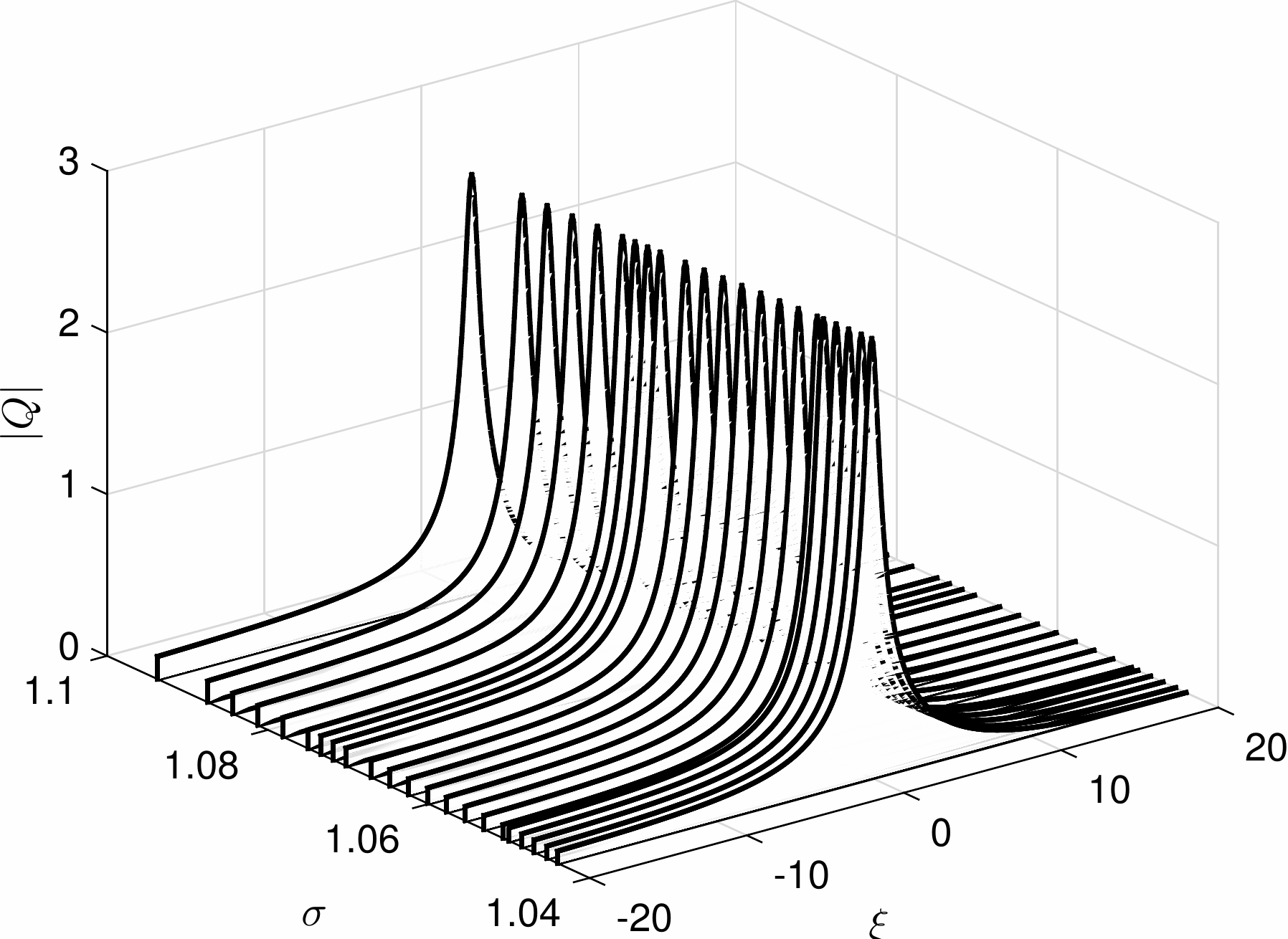}}
  \caption{Amplitude $|Q|$ of the blowup profile for various values of
    $\sigma$ close to 1.}
  \label{fig:3dLinProfile}
\end{figure}

As $\sigma$ decreases, the parameter $a$ decreases rapidly to zero and
the profile $Q$ tends to a soliton solution of the DNLS. The parameter
$b$ increases to a limiting value
$b_0 \equiv\lim\limits_{\sigma \rightarrow 1} b$. Recall that soliton
solutions \eqref{BrightSigma} and \eqref{LumpSigma} to
\eqref{SolitonEqn} are defined for $|b| < 2$ and $b = 2$
respectively. When $\sigma = 1$ and $|b| <2$, the Hamiltonian of
\eqref{e:gdnls_soliton_form} (with $R = B_1$) is
$ H(B_1) = -b \sqrt{4-b^2}$ and its momentum is
$P(B_1) = -2 \sqrt{4-b^2}$, while when $b=2$ (and $R = L_1$) both the
energy and momentum vanish. By construction, the profile $Q$ has a
vanishing Hamiltonian and momentum. We thus make the Ansatz
$\epsilon \equiv 2-b \rightarrow 0$ while $\frac{ \epsilon}{a} \gg 1$.
We will show that this assumption leads to a consistent asymptotic
analysis of all the parameters. The behaviour of $a$ and $\epsilon$
for a range of values of $\sigma$  is illustrated in Figure
\ref{fig:a_eps}.

\begin{figure}
  \centering \subfigure[$a$
  vs. $\sigma$]{\includegraphics[width=6.25cm]{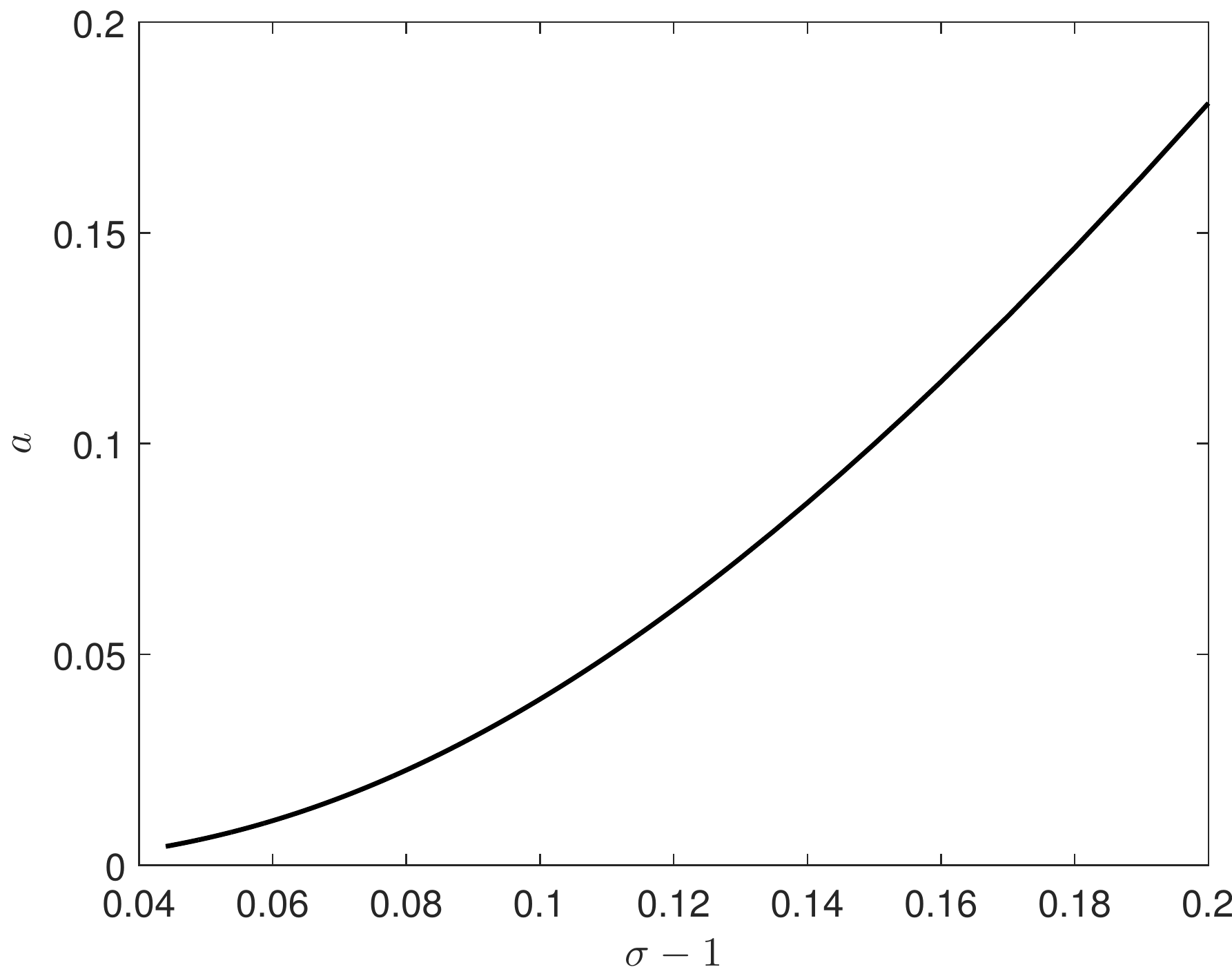}}
  \subfigure[$\epsilon$
  vs. $\sigma$]{\includegraphics[width=6.25cm]{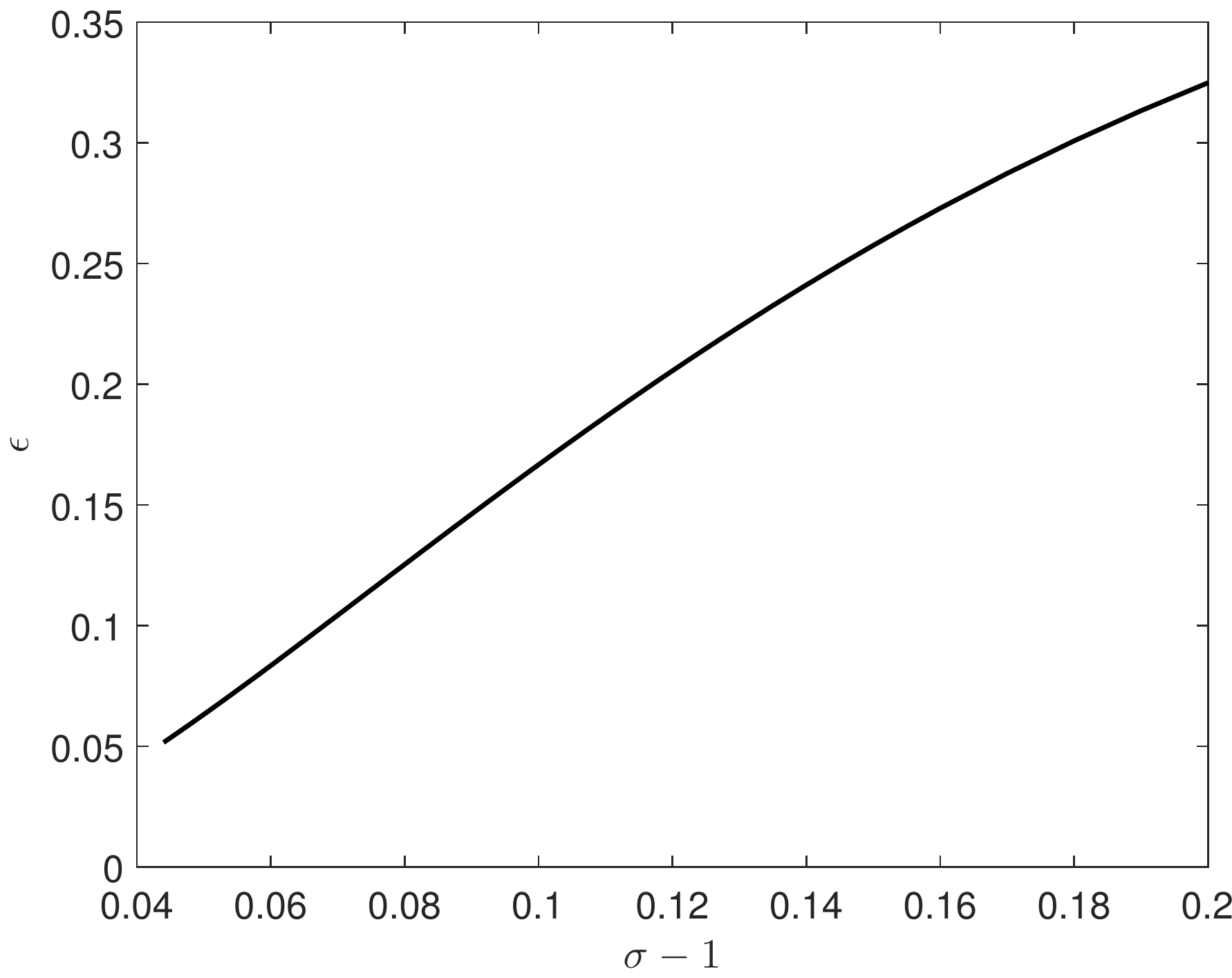}}
  \caption{Numerically computed parameters  $a$ and $\epsilon = 2-b$ for  a range of
    $\sigma \in [1.044, 1.2]$.}
  \label{fig:a_eps}
\end{figure}

Turning to the amplitude and phase equations of $Q$,
\eqref{eqn:Amp_PhaseAmp} and \eqref{eqn:Psi_PhaseAmp}, we see in
Figure \ref{fig:PsiMultiTurning} that $\psi \equiv (\arg P)_\xi$ is
very small in a large region containing the origin and the modified
profile $P$ is essentially real. Rewriting \eqref{eqn:Amp_PhaseAmp} in
terms of $\epsilon$ gives
\begin{equation}\label{eqn:AmpEqnEpsilon}
  \begin{split}
    & A_{\xi\xi} + \left(\tfrac{1}{4} (a\xi + \epsilon)^2 - (a\xi +
      \epsilon) - \psi^2\right)A \cr & \qquad + \left(-1 +
      \tfrac{1}{2} (a\xi + \epsilon) - \tfrac{ \sigma}{\sigma+1}
      \psi\right)A^{2\sigma +1} +
    \tfrac{2\sigma+1}{(2\sigma+2)^2}A^{4\sigma+1} = 0.
  \end{split}
\end{equation}
If $\psi$ is very small, the linear term in this equation reduces to
\begin{align}\label{eqn:linterm}
  \left(\tfrac{1}{4}(a\xi + \epsilon)^2 - (a\xi + \epsilon)\right)A
\end{align}
which is negative if
$\xi \in \left(-\frac{ \epsilon}{a}, \frac{ 4- \epsilon}{a}\right)$.
We thus define the turning points
\begin{equation}
  \label{e:turning}
  \xi_{-} \equiv - \frac{ \epsilon}{a}, \quad \xi_{+} \equiv
  \frac{4-\epsilon}{a}.
\end{equation} 
Figure \ref{fig:PsiMultiTurning} shows how the behaviour of $\psi$
changes near these points for several values of $\sigma$. For large
$|\xi|$, the term \eqref{eqn:linterm} is positive, it must be
compensated by $\psi^2$ to avoid oscillations of the amplitude. As
$|\xi| \rightarrow \infty$, $\psi \approx \frac{a\xi}{2}$ and we
confirm in Figure \ref{fig:PsiMultiTurning} that $\psi$ achieves this
behaviour when $|\xi|$ is much larger than the turning points.

\begin{figure}[h]
  \centerline{\includegraphics[height = 8cm]{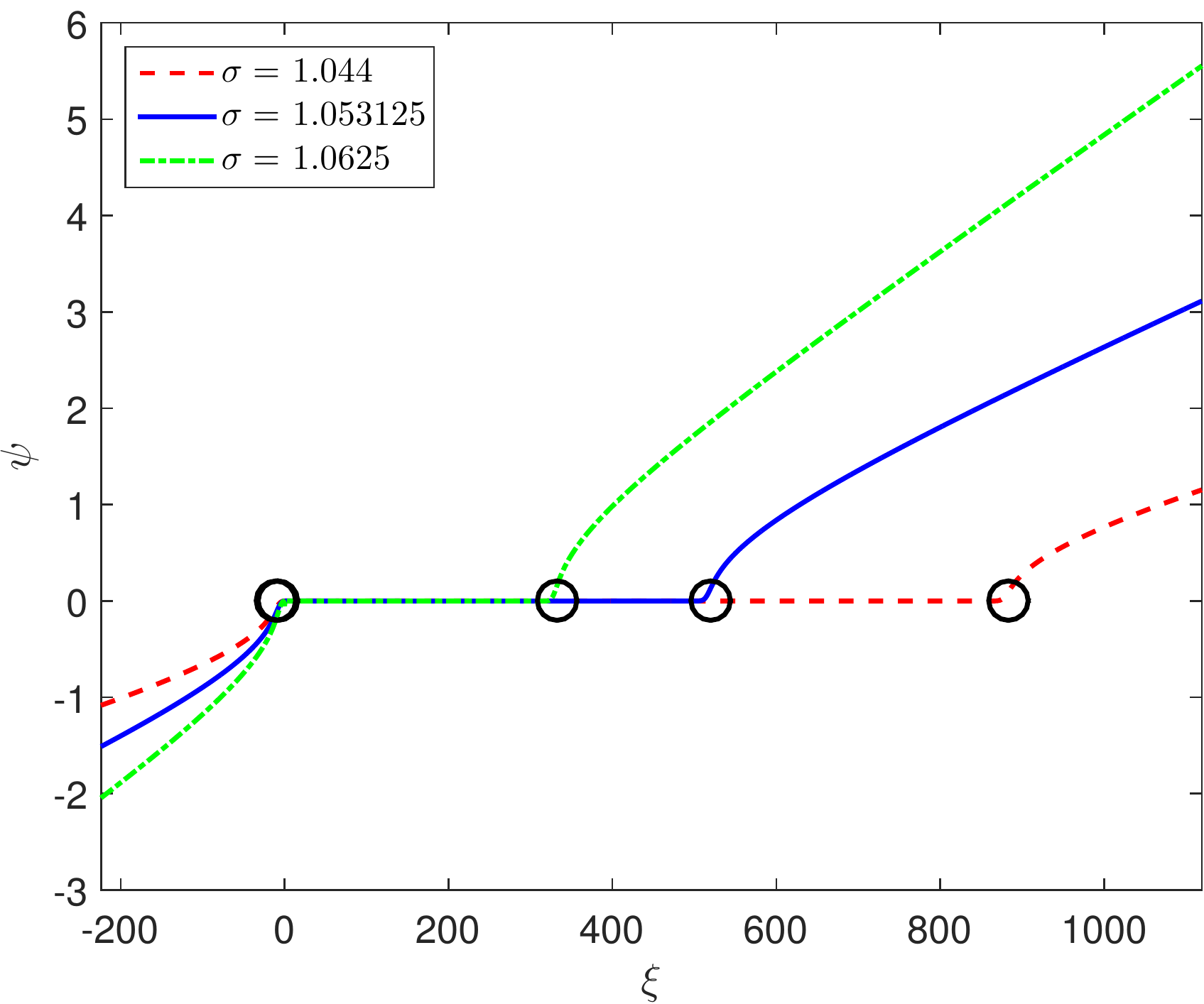}}
  \caption{Phase derivative $\psi$ at several values of $\sigma$. Note
    the change in behaviour near the turning points
    $\xi_{-} = -\frac{ \epsilon}{a}$ and $\xi_{+} = \frac{4}{a}$.}
  \label{fig:PsiMultiTurning}
\end{figure}

\begin{figure}
  
  \subfigure[]{\includegraphics[width=6.25cm]{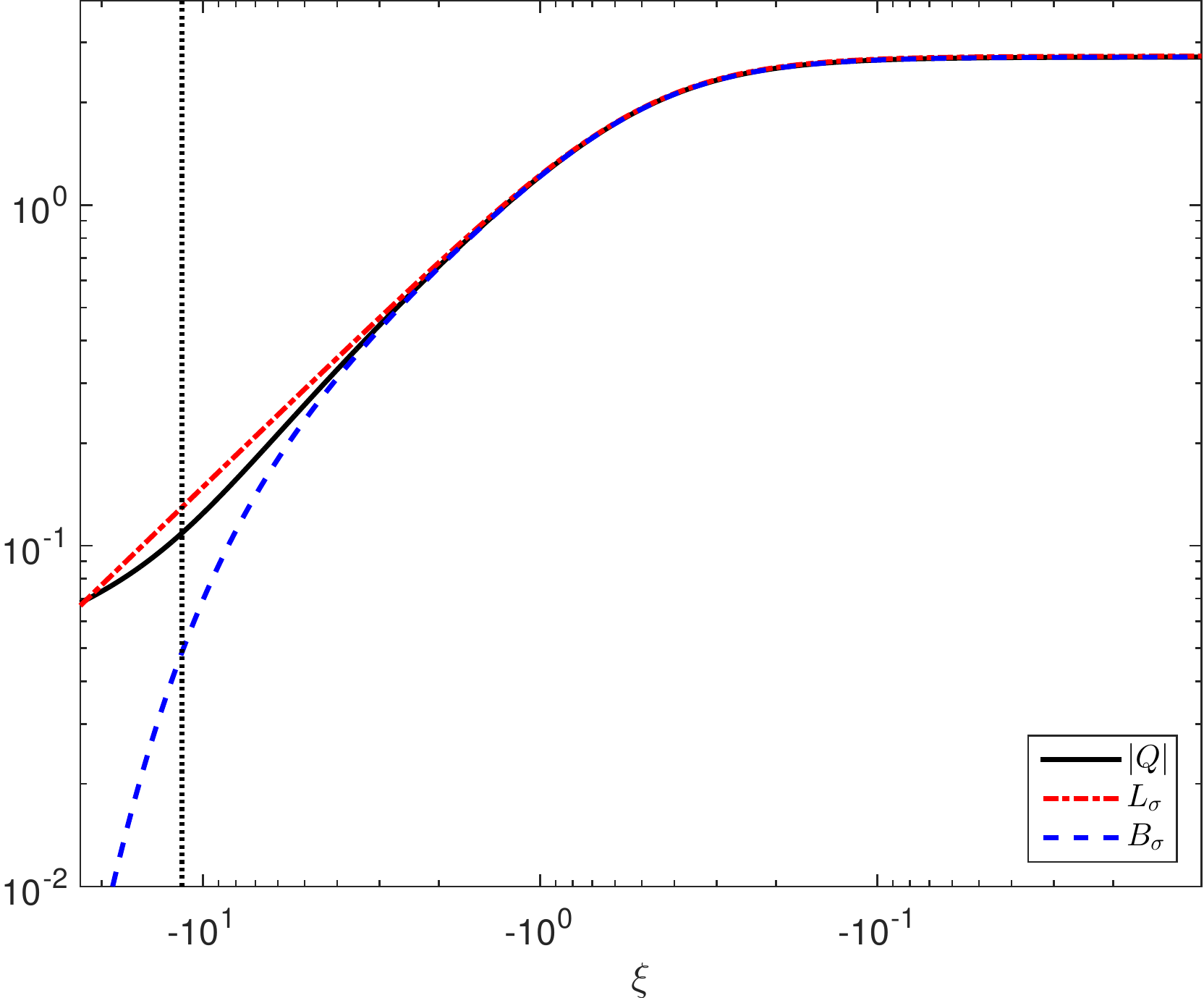}}
  \subfigure[]{\includegraphics[width=6.25cm]{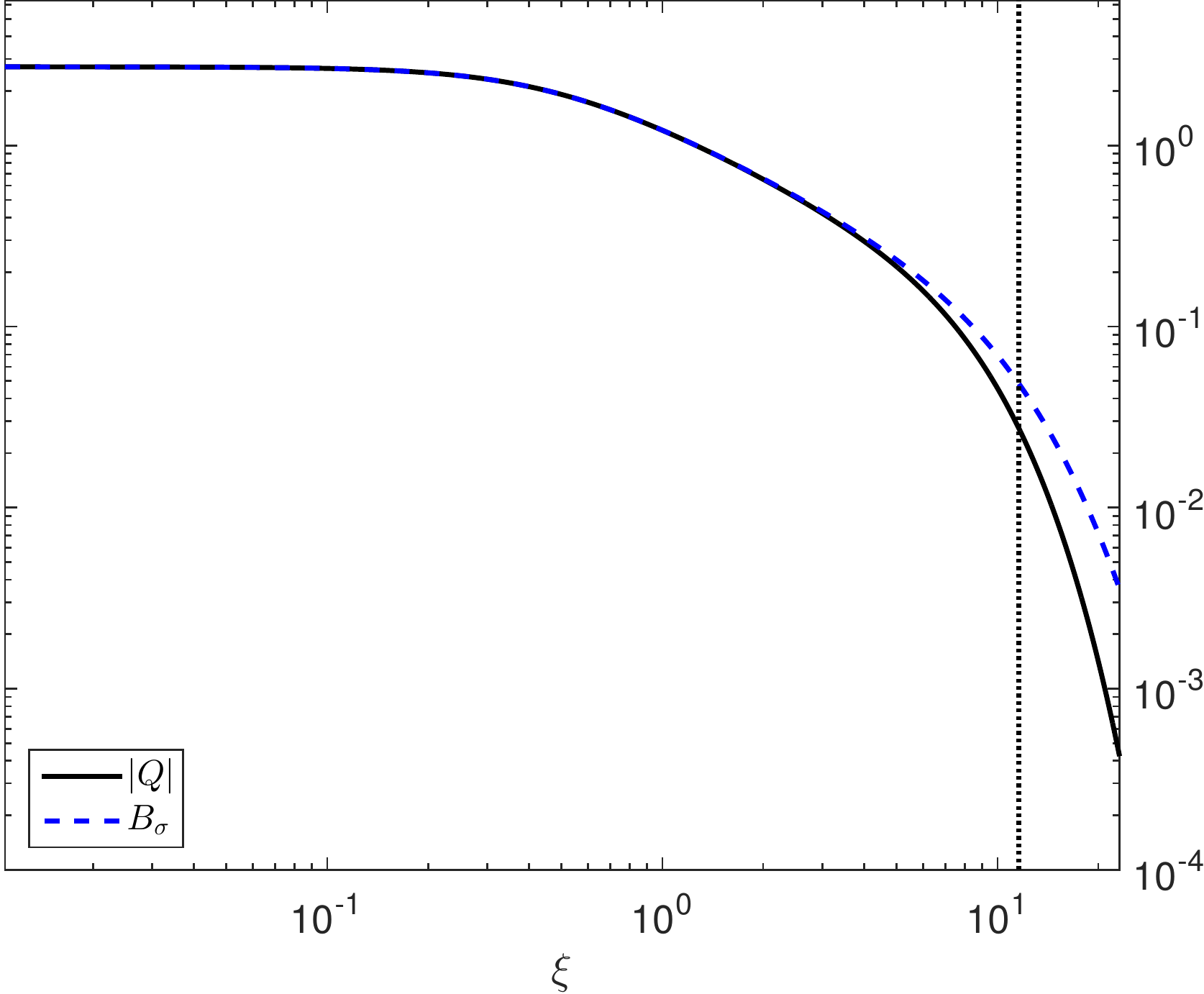}}
  
  \caption{$|Q|$ calculated at $\sigma = 1.044$ and compared to both
    the lump and bright solitons for
    $-\frac{ \epsilon}{a} \leq \xi \leq 0$ (left) and the bright
    soliton for $0 \leq \xi \leq \frac{ \epsilon}{a}$ (right). The
    vertical lines correspond to $|\xi| = \frac{ \eps}{a}$} .
  \label{fig:AmpBrightLump}
\end{figure}

We now consider the amplitude equation \eqref{eqn:AmpEqnEpsilon}. For
$0 \leq \xi \ll \frac{ \epsilon}{a}$,
$a\xi + \epsilon \approx \epsilon$ and \eqref{eqn:AmpEqnEpsilon}
essentially reduces to \eqref{SolitonEqn} satisfied by the bright
soliton \eqref{BrightSigma} with parameter $b = 2- \epsilon$. On the
negative side, for $ \xi \leq 0$, the terms $a\xi$ and $\epsilon$ work
against each other and we find that the lump soliton \eqref{LumpSigma}
better approximates the solution.  However, when $\xi$ approaches
$\xi_- = - \frac{\eps}{a}$, the phase becomes important and the
amplitude deviates from \eqref{LumpSigma} as shown in Figure
\ref{fig:AmpBrightLump} (a). This deviation is a source of difficulty
in the asymptotic analysis.  Figure \ref{fig:AmpBrightLump} (b)
displays $|Q|$ for $\xi > 0 $ compared to the bright soliton
\eqref{BrightSigma}.

\begin{remark}
  For $\xi \ll \epsilon^{-1/2}$, the bright and lump solitons nearly
  coincide.
\end{remark}
We are now in a position to better interpret the asymmetry of the
profile amplitude. The turning point $\xi_+ \approx \frac{4}{a}$ grows
very rapidly. When $1 \ll \xi < \xi_{+}$, the nonlinear terms in
\eqref{eqn:AmpEqnEpsilon} are negligible and the negative linear term
\eqref{eqn:linterm} forces the amplitude to decay very rapidly. Figure
\ref{fig:AmpPlusMultiTurning} shows the amplitude for several values
of $\sigma$ close to $1$ and we clearly see this fast decay up to the
turning point $\xi_{+}$. In this region the WKB method provides a good
approximate solution. Meanwhile, on the negative side,
$|\xi_{-}| = \frac{ \epsilon}{a}$ grows moderately and the linear term
is very small for $\xi \in \left(\xi_{-}, 0\right)$.  We observe
only a moderate decay of the amplitude for negative values of
$\xi$. When $\xi < \xi_-$ away from the turning point, the WKB method
provides a good approximation to the solution.  Finally, when $|\xi|$
is large and far away from the turning points, the amplitude is well
approximated by the leading order asymptotics
$|Q| \approx A_{\pm} |\xi|^{-\frac{ 1}{2\sigma}}$.

In the next section, we derive a formal asymptotic analysis motivated
by these observations and describe the leading order behaviour of the
parameters $a$ and $\epsilon$ as $\sigma \rightarrow 1$.

\begin{figure}[h]
  \centerline{\includegraphics[height = 8cm]{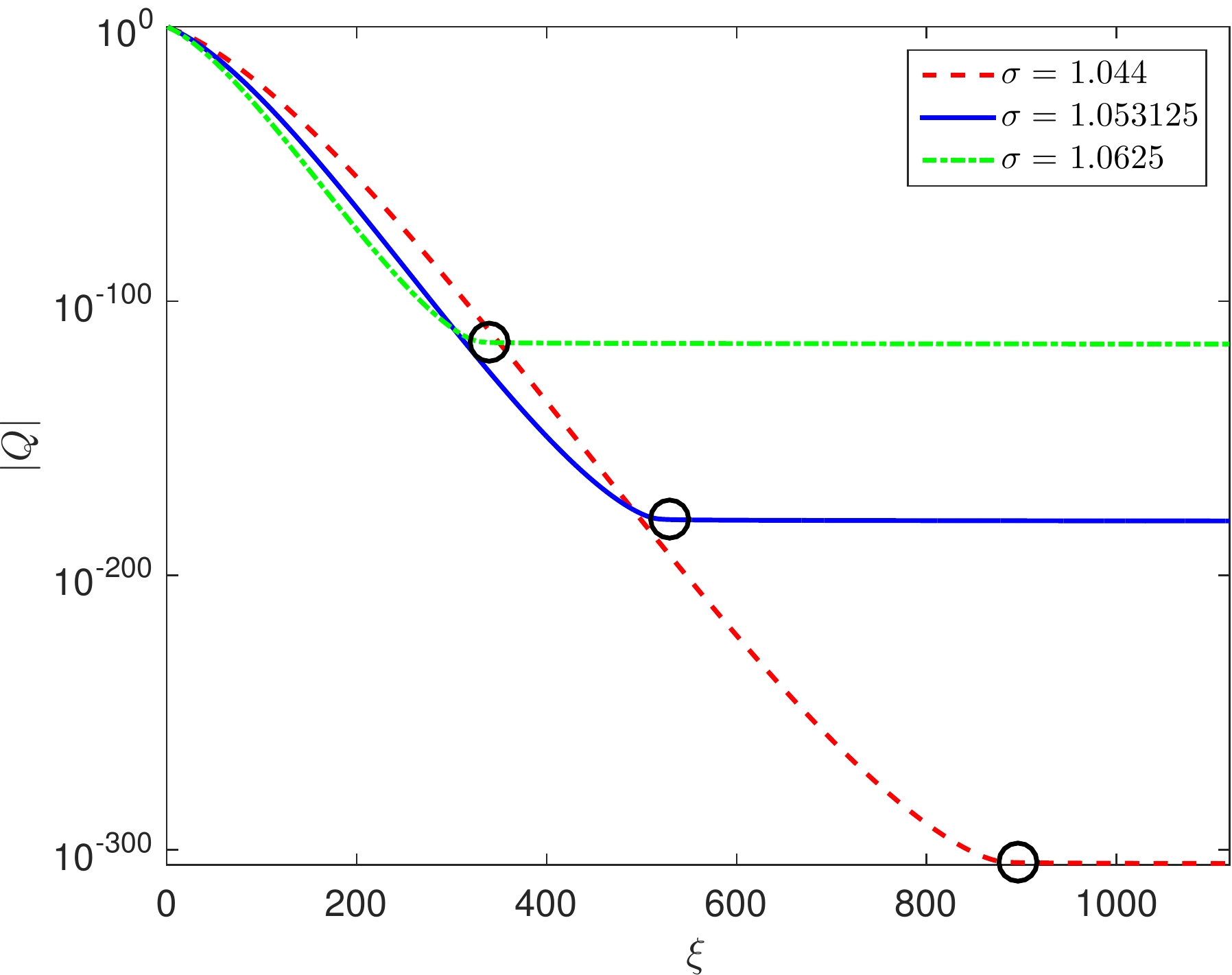}}
  \caption{$|Q|$ computed at several values of $\sigma$. Note the
    rapid decay up to the turning point $\xi_{+} = \frac{4}{a}$ marked
    with $\bigcirc$.}.
  \label{fig:AmpPlusMultiTurning}
\end{figure}

\section{Asymptotic analysis}

The numerics indicate that, as $\sigma \rightarrow 1$, the parameters
$a(\sigma)$, $b(\sigma)$ tend to 0 and 2 respectively, while the
profile $Q$ tends to the lump soliton \eqref{LumpSigma}. In this
section, we investigate the deformation of $Q$ and the parameters
$a(\sigma)$, $b(\sigma)$ using asymptotic methods and analysis of the
numerical data. In the course of the calculation, three additional
parameters come into play, the coefficients $A_{+}$, $A_{-}$ appearing
in the large $|\xi|$ behaviour of $Q$ (see
equation\eqref{eqn:Asymptotics2ndOrder}) and the derivative of the
phase at the origin $\psi(0)$.

Section \ref{sect:xi>0} concentrates on the region $\xi > 0$. We
connect the bright soliton \eqref{BrightSigma} which approximates $P$
(defined in \eqref{e:QP_relation}) close to the origin to the
asymptotic behaviour at large $\xi$ \eqref{eqn:Asymptotics2ndOrder}
using WKB method. We obtain two relations between the above
parameters, given in equations \eqref{eqn:AplusFinalRelation} and
\eqref{eqn:psi0AsymptoticForm}.

In Section \ref{sect:xi<0} we examine the region $\xi < 0$.  Close to
the origin, the lump soliton \eqref{LumpSigma} approximation is valid,
however nonlinear effects become important near the turning point
$\xi_-$.  A precise analytic form of the profile in the (relatively
small) region containing the turning point remains an open problem,
nevertheless we are able to find a relation between the
parameters (Eq \eqref{eqn:AminAymptoticForm}). Lacking a precise
description of the profile in the intermediate region, we carefully analyze our numerical data
and find that the turning point behaves like a power law in
$(\sigma-1)$: $ a/\eps \sim \paren{\sigma-1}^\alpha $ with
$\alpha \approx 1.2$. (Section \ref{sect:TPNumerics}.)

\subsection{Asymptotic analysis of the profile for $\xi > 0$}
\label{sect:xi>0}

\begin{proposition}\label{prop:APlusRelation}
  As $\sigma \rightarrow 1$, the behaviour of the coefficient $A^+$
  defined in \eqref{eqn:Asymptotics2ndOrder} is given by 
  \begin{align}
    \label{eqn:AplusFinalRelation}
    A_{+} \approx 4 \epsilon^{3/4} a^{-1/2} \exp\left\{-\frac{\pi}{a} + \frac{2}{3} \frac{ \epsilon^{3/2}}{a}\right\}.
  \end{align}
\end{proposition}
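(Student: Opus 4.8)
The plan is to track the amplitude $A = |P|$ of the modified profile across the positive half-line $\xi > 0$, matching three regimes: a near-origin soliton regime, an intermediate WKB regime where the linear potential $\tfrac14(a\xi+\epsilon)^2 - (a\xi+\epsilon)$ is negative and the nonlinearity is still active, and a far-field regime governed by Proposition~\ref{prop:Asymptotics}. First I would work in the region $0 \le \xi \ll \epsilon/a$ where, as observed in Section~\ref{sect:Observations}, $a\xi + \epsilon \approx \epsilon$, the phase derivative $\psi$ is negligible, and \eqref{eqn:AmpEqnEpsilon} reduces to the soliton equation \eqref{SolitonEqn} with $b = 2-\epsilon$; hence $A \approx B_\sigma$ with the explicit hyperbolic form \eqref{BrightSigma}. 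I would then read off the exponential decay rate of $B_\sigma$ as $\xi$ leaves this inner region: since $\sigma\sqrt{4-b^2} = \sigma\sqrt{4\epsilon - \epsilon^2} \approx 2\sqrt{\epsilon}$ for small $\epsilon$ (and $\sigma\to1$), $B_\sigma$ decays like a multiple of $\epsilon^{1/2} e^{-\sqrt{\epsilon}\,\xi}$ in its tail, and I would extract both the rate $\sqrt{\epsilon}$ and the amplitude prefactor carefully, keeping the $\epsilon$-powers.

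Next I would set up the WKB connection across the classically-allowed-then-forbidden structure. In the regime $1 \ll \xi < \xi_+ = (4-\epsilon)/a$ the nonlinear terms in \eqref{eqn:AmpEqnEpsilon} are negligible (since $A$ has already decayed), and $A$ satisfies approximately $A_{\xi\xi} = V(\xi) A$ with $V(\xi) = \tfrac14(a\xi+\epsilon)^2 - (a\xi+\epsilon)$, which is negative on $(\xi_-,\xi_+)$ and positive beyond. The WKB solution that matches the decaying soliton tail at the left end picks up an exponential factor $\exp\{-\int \sqrt{-V}\}$ — wait, $V<0$ there so the solution is oscillatory-type; more precisely I would substitute $u = a\xi + \epsilon$ so that $V = \tfrac14 u^2 - u = \tfrac14 u(u-4)$, and the relevant WKB phase integral over the "forbidden" part where the soliton-matched solution must connect through is $\frac{1}{a}\int_{\epsilon}^{?} \sqrt{|V|}\,du$. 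The dominant contribution is $\frac{1}{a}\int_0^4 \sqrt{\tfrac14 u(4-u)}\,du = \frac{1}{a}\cdot \frac{1}{2}\int_0^4\sqrt{u(4-u)}\,du = \frac{1}{a}\cdot\frac{1}{2}\cdot 2\pi = \frac{\pi}{a}$, which produces the $e^{-\pi/a}$ in \eqref{eqn:AplusFinalRelation}; the correction from the true lower endpoint $u=\epsilon$ rather than $u=0$ gives $-\frac{1}{a}\int_0^\epsilon\sqrt{\tfrac14 u(4-u)}\,du \approx -\frac{1}{a}\int_0^\epsilon \sqrt{u}\,du = -\frac{2}{3}\frac{\epsilon^{3/2}}{a}$ — and here I would need to be careful about the sign, which from the stated answer must come out as $+\frac23\epsilon^{3/2}/a$, meaning the soliton tail already carries a piece of this phase that partially cancels, so the bookkeeping of which endpoint contributes to which factor is delicate. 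Then, past the turning point $\xi_+$, $V > 0$ and the solution decays; I would match this to the far-field form $|Q| \approx A_+ |\xi|^{-1/2\sigma}$ from \eqref{eqn:Asymptotics2ndOrder}, noting that the algebraic far-field behaviour is really the large-$\xi$ resolution of the $u^2/4$ term in $V$, i.e. $\sqrt{V}\sim u/2 = (a\xi+\epsilon)/2$ integrates to the quadratic phase already removed in \eqref{e:QP_relation}, leaving the algebraic prefactor; the constant $A_+$ is then the accumulated amplitude, which is the soliton prefactor $\sim\epsilon^{1/2}$ times the WKB amplitude factor $(-V)^{-1/4}$ evaluated appropriately (contributing powers of $\epsilon$ and $a$) times the exponential.

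Collecting the pieces: amplitude prefactor from the soliton $\propto \epsilon^{1/2}$, WKB amplitude factor $\propto (\text{turning-point derivative})^{-1/6}$ or similar giving extra $\epsilon^{1/4} a^{-1/2}$, and the exponential $\exp\{-\pi/a + \tfrac23\epsilon^{3/2}/a\}$, yielding $A_+ \approx 4\,\epsilon^{3/4} a^{-1/2} \exp\{-\pi/a + \tfrac23\epsilon^{3/2}/a\}$. The overall numerical constant $4$ I would pin down by tracking the $B_\sigma$ prefactor $((\sigma+1)(4-b^2)/2)^{1/2\sigma}\to (2\cdot 4\epsilon/2)^{1/2} = 2\sqrt{\epsilon}$ (at $\sigma=1$) together with the Airy/parabolic-cylinder connection constants at the $\xi_+$ turning point. \textbf{The main obstacle} I anticipate is precisely this constant- and $\epsilon$-power bookkeeping through the left turning point $\xi_-$: near $\xi_-$ the phase $\psi$ ceases to be negligible (as emphasized in the discussion of Figure~\ref{fig:AmpBrightLump}), so the clean "real $P$" WKB picture breaks down there, and one must argue either that the $\xi>0$ analysis never actually needs to cross $\xi_-$ (it doesn't — $\xi_-<0$), or more subtly that the contribution of the region near $u=\epsilon$ is correctly captured by the naive endpoint of the phase integral despite the soliton-to-WKB transition happening there; getting the sign and coefficient of the $\tfrac23\epsilon^{3/2}/a$ term right, and not double-counting the phase already extracted in \eqref{e:QP_relation}, is where the real care is required.
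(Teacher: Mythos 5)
Your overall strategy is the same matched-asymptotics route the paper takes (bright-soliton tail near the origin, WKB across the barrier up to $\xi_+=(4-\eps)/a$, Airy connection at $\xi_+$, far-field identification of $A_+$), and you have correctly located the sources of the two exponents: the full barrier integral $\tfrac1a\int_0^4\sqrt{\tfrac14 u(4-u)}\,du=\pi/a$ and the endpoint shift to $u=\eps$ giving $\tfrac23\eps^{3/2}/a$. But there are two genuine gaps. First, your formulation in terms of the real amplitude $A=|P|$ with $\psi$ dropped has the signs scrambled: from \eqref{eqn:AmpEqnEpsilon} the linearized equation is $A_{\xi\xi}=-\paren{\tfrac14(a\xi+\eps)^2-(a\xi+\eps)}A$, so $(\xi_-,\xi_+)$ is the exponential (barrier) region, and beyond $\xi_+$ your truncated equation would wrongly predict oscillations of $A$; there the neglected $\psi^2\to(a\xi/2)^2$ is exactly what must compensate, so the amplitude-only picture cannot carry out the matching past $\xi_+$ that actually defines $A_+$. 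The paper avoids this by working with the complex function $S=Q\exp\{i(a\xi^2/4-b\xi/2)\}$ of \eqref{e:SQ_relation}: beyond $\xi_+$ the WKB solution of \eqref{eqn:S_x} is oscillatory, its phase reproduces the far-field phase in \eqref{eqn:SAsymp} (killing one branch, $D^R=0$), and the amplitude match yields the crucial factor $C^R=\sqrt{a/2}\,A_+$ of \eqref{rel:CR_Aplus}; this step is entirely absent from your outline, yet it supplies the $a^{-1/2}$ in the result.

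Second, the sign of $\tfrac23\eps^{3/2}/a$ and the constant $4$ are not derived in your proposal but read off from the stated answer. Both follow from bookkeeping you could complete: the soliton--WKB matching takes place in $a^{-1/3}\ll\xi\ll\eps/a$ (which requires the a posteriori assumption $\eps\gg a^{2/3}$ that you never state), where the traversed barrier is $\tfrac1a\int_\eps^4\sqrt{\tfrac14u(4-u)}\,du=\tfrac{\pi}{a}-\tfrac23\tfrac{\eps^{3/2}}{a}+\dots$, so a shorter barrier means less suppression and the plus sign is automatic --- no cancellation with a ``phase carried by the soliton tail'' is involved; the $\xi$-dependent part $\sqrt{\eps}\,\xi$ of the WKB exponent simply matches the soliton decay rate. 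For the constant, the tail of \eqref{BrightSigma} is $B_\sigma\approx 2\sqrt{2\eps}\,e^{-\sqrt{\eps}\xi}$ (your parenthetical computation gives $2\sqrt{\eps}$, missing the $\sqrt{2}$ coming from $\cosh z\approx\tfrac12 e^{z}$), the WKB amplitude at the matching point contributes $\eps^{-1/4}$, and the Airy connection gives $C^L=C^R$ because the $a^{-1/6}\sqrt{\pi}$ factors in \eqref{rel:a2_CR} and \eqref{rel:a2_CL} cancel; assembling these with $A_+=\sqrt{2/a}\,C^L$ produces $4\eps^{3/4}a^{-1/2}$. Until those matchings are actually performed, the proposal identifies the right ingredients but does not yet prove \eqref{eqn:AplusFinalRelation}.
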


\begin{proof}
  We use the approach presented in Chapter 8 of \cite{Sulem:1999kx} to
  connect the behaviour of $Q$ as $\xi \to +\infty$ to the bright
  soliton approximation valid for $\xi \ll \frac{ \epsilon}{a}$.
  First, we introduce the function $S$, which relates to $Q$ by
  \begin{equation}
    \label{e:SQ_relation}
    S =Q\exp\left\{i \left(\frac{a\xi^2}{4} - \frac{b\xi}{2}\right)\right\}.
  \end{equation}
  $S$ satisifies
  \begin{align}\label{eqn:SFull}
    S_{\xi\xi} - \left(1- \tfrac{1}{4} (a\xi - b)^2\right) S - i a \tfrac{ \sigma-1}{2\sigma} S + \tfrac{1}{2} (a\xi-b) |S|^{2\sigma} S + i |S|^{2\sigma} S_\xi =0,
  \end{align}
  and as $\xi \rightarrow + \infty$,
  \begin{align}\label{eqn:SAsymp}
    S_{\Asymp} = A_{+} \xi^\frac{ -1}{2\sigma} \exp\left\{i\left(\frac{a\xi^2}{4} - \frac{b\xi}{2} - \frac{1}{a} \ln \xi\right)\right\}.
  \end{align}
  For sufficiently large $\xi \gg 1$, the nonlinear terms in
  \eqref{eqn:SFull} are negligible and we may write
  \begin{align}\label{eqn:S_xigg1}
    S_{\xi\xi} = \left(1- \tfrac{1}{4} (a\xi - b)^2\right)S.
  \end{align}
  Setting $x = \frac{1}{2} ( a\xi - b)$, \eqref{eqn:S_xigg1} becomes
  \begin{align}\label{eqn:S_x}
    \tfrac{a^2}{4} S_{xx} = (1-x^2)S,
  \end{align}
  and the solution can be approximateed by the WKB method. For $x>1$,
  \begin{align}\label{eqn:SWKBR}
    S_{\WKB}^R = \frac{1}{(x^2 - 1)^\frac{1}{4}} \left(C^R e^{i\left(\frac{ \pi}{4} + \int_{1}^{x} \sqrt{s^2 - 1}ds\right)} + D^R e^{i\left(\frac{ \pi}{4} - \int_{1}^{x} \sqrt{s^2 - 1}ds\right)}\right) .
  \end{align}
  When $x \gg 1$,
  \begin{align}
    \frac{2}{a} \int_{1}^{x} \sqrt{s^2 - 1}ds \approx \frac{1}{a} (x^2 - \ln x) \approx \frac{a\xi^2}{4} - \frac{b\xi}{2} - \frac{1}{a} \ln \xi,
  \end{align}
  implying that $D^R = 0$. Matching the amplitudes of
  \eqref{eqn:SWKBR} and \eqref{eqn:SAsymp}, we find
  \begin{align}\label{rel:CR_Aplus}
    C^R = \sqrt{\frac{a}{2}} A_{+}.
  \end{align}
  The right hand side of \eqref{eqn:S_x} vanishes at the turning point
  $x = 1$.  The WKB approximation \eqref{eqn:SWKBR} is valid for
  $x - 1 \gg a^\frac{2}{3}$. If, in addition, $x - 1 \ll 1$,
  \eqref{eqn:SWKBR} can be simplified to
  \begin{align}\label{eqn:SWKBR_xnear1}
    S_{\WKB}^R \approx \frac{C^R}{(2(1-x))^\frac{1}{4}} e^{i \left(\frac{ \pi}{4} + \frac{4 \sqrt{2}}{3a} (x-1)^\frac{3}{2}\right)}.
  \end{align}
  On the other hand, when $|x - 1| \ll 1$, we replace \eqref{eqn:S_x}
  by
  \begin{align}\label{eqn:S_xnear1}
    \tfrac{a^2}{4} S_{xx} = 2(1-x) S.
  \end{align}
  In the variable $t = 2 a^{-\frac{ 2}{3}} (1-x)$,
  \eqref{eqn:S_xnear1} is the Airy equation
  \begin{align}\label{eqn:AiryEqn}
    S_{tt} = t S
  \end{align}
  whose solution is $S_{\Airy} = a_1 \Ai(t) + a_2 \Bi (t)$. In terms
  of the variable $t$, the region $a^\frac{2}{3} \ll x-1 \ll 1$
  corresponds to $(-t) \gg 1$. Using the asymptotics of $\Ai$ and $Bi$
  as $t \rightarrow -\infty$,we obtain
  \begin{align}\label{eqn:SAiry_tnegInf}
    S_{\Airy} \approx \frac{1}{\sqrt{\pi} (-t)^\frac{1}{4}} \left(a_1
    \sin \left(\frac{ \pi}{4} + \frac{2}{3}(-t)^\frac{3}{2}\right) +
    a_2 \cos \left(\frac{ \pi}{4} +
    \frac{2}{3}(-t)^\frac{3}{2}\right)\right). 
  \end{align}
  Matching the phases of \eqref{eqn:SWKBR_xnear1} and
  \eqref{eqn:SAiry_tnegInf} requires $a_1 = i a_2$, and matching the
  amplitudes gives
  \begin{align}\label{rel:a2_CR}
    a_2 = a^{-\frac{ 1}{6}} \sqrt{\pi} C^R .
  \end{align}
  For $x<1$, the region $1 \gg 1-x \gg a^\frac{2}{3}$ corresponds to
  $t \gg 1$. Using the asymptotics of $\Ai$ and $\Bi$ as
  $t \rightarrow + \infty$, we obtain
  \begin{align}\label{eqn:SAiry_tposInf}
    S_{\Airy} \approx \frac{1}{\sqrt{\pi} t^\frac{1}{4}} \left( \frac{a_1}{2} e^{ \frac{ -2}{3} t^\frac{3}{2}} + a_2 e^{\frac{2}{3} t^\frac{3}{2}}\right) \approx \frac{a_2}{\sqrt{\pi} t^\frac{1}{4}} e^{\frac{2}{3} t^\frac{3}{2}}
  \end{align}
  since the term with a negative exponent is negligible. On the other
  hand, solving \eqref{eqn:S_x} for $x<1$ by WKB gives
  \begin{align}\label{eqn:SWKBL}
    S_{\WKB}^L = \frac{1}{(1-x^2)^\frac{1}{4}} \left(C_1^L e^{\frac{2}{a} \int_{1}^{x}\sqrt{1-s^2}ds} + C_2^L e^{- \frac{2}{a} \int_{1}^{x} \sqrt{1-s^2}ds}\right) .
  \end{align}
  Noting that for $(1 - x) \ll 1$
  \begin{align}
    \frac{2}{a} \int_{1}^{x} \sqrt{1-s^2}ds \approx -\frac{4 \sqrt{2}}{3a} (1-x)^\frac{3}{2} ,
  \end{align}
  we have that for $1 \gg 1-x \gg a^\frac{2}{3}$ \eqref{eqn:SWKBL}
  simplifies to
  \begin{align}\label{eqn:SWKBL_near1}
    S_{\WKB}^L \approx \frac{1}{(2(1-x))^\frac{1}{4}} \left(C_1^L e^{- \frac{4 \sqrt{2}}{3a} (1-x)^\frac{3}{2}} + C_2^L e^{\frac{4 \sqrt{2}}{3a} (1-x)^\frac{3}{2}}\right) . 
  \end{align}
  Finally, matching \eqref{eqn:SWKBL_near1} to
  \eqref{eqn:SAiry_tposInf} and solving for $C^L \equiv C_2^L$ gives
  \begin{align}\label{rel:a2_CL}
    a_2 = a^{-\frac{1}{6}} \sqrt{\pi} C^L,
  \end{align}
  where we have again ignored the term with a negative exponent. The
  approximation \eqref{eqn:SWKBL} for $S$ is real valued and we
  connect it to the bright soliton approximation valid for
  $\xi \ll \frac{\epsilon}{a}$ as described in Section
  \ref{sect:Observations}.

  We assume here that $\epsilon \gg a^\frac{2}{3}$. This Ansatz will
  be checked a posteriori.  The WKB approximation thus remains valid
  in some region included in $\xi < \frac{ \epsilon}{a}$. Indeed, we
  work within the region
  $a^{-\frac{ 1}{3} }\ll \xi \ll \frac{ \epsilon}{a}$, equivalently
  $a^\frac{2}{3} \ll x+1 \ll \epsilon$. This condition also ensures
  that $\epsilon^{-\frac{ 1}{2}} \ll a^{-\frac{ 1}{3}}$ and the bright
  soliton can be approximated as
  \begin{align}\label{eqn:BrightLargeXi}
    B_\sigma(\xi) \approx 2\sqrt{2 \epsilon} e^{ -\sqrt{\epsilon}\xi} .
  \end{align}
  In this region $1 - x^2 \approx 2(1+x) = a\xi + \epsilon$, so we
  approximate
  \begin{align}
    \int_{1}^{x} \sqrt{1-s^2}ds \approx -\frac{ \pi}{2} + \frac{2\sqrt{2}}{3} (1+x)^\frac{3}{2} \approx -\frac{ \pi}{2} + \frac{1}{3} \epsilon^\frac{3}{2} + \frac{1}{2} \sqrt{\epsilon} a\xi
  \end{align}
  and the WKB approximation \eqref{eqn:SWKBL} can be written as
  \begin{align}\label{eqn:SWKBL_xnear-1}
    S_{WKB}^L \approx C^L \epsilon^{-\frac{ 1}{4}} \exp\left\{\frac{ \pi}{a} - \frac{2}{3} \frac{ \epsilon^\frac{3}{2}}{a}\right\} e^{-\sqrt{\epsilon} \xi} .
  \end{align}
  Matching \eqref{eqn:BrightLargeXi} and \eqref{eqn:SWKBL_xnear-1}
  gives
  \begin{align}\label{rel:CL_Bright}
    C^L = 2\sqrt{2} \epsilon^\frac{3}{4} \exp\left\{-\frac{ \pi}{a} + \frac{2}{3} \frac{ \epsilon^\frac{3}{2}}{a}\right\} .
  \end{align}
  Finally, combining relations \eqref{rel:CR_Aplus},
  \eqref{rel:a2_CR}, \eqref{rel:a2_CL} and \eqref{rel:CL_Bright}, we
  obtain relation \eqref{eqn:AplusFinalRelation} between $A_{+}$, $a$,
  and $\epsilon$.

\end{proof}

\begin{figure}[h]
  \centerline{\includegraphics[height=8cm]{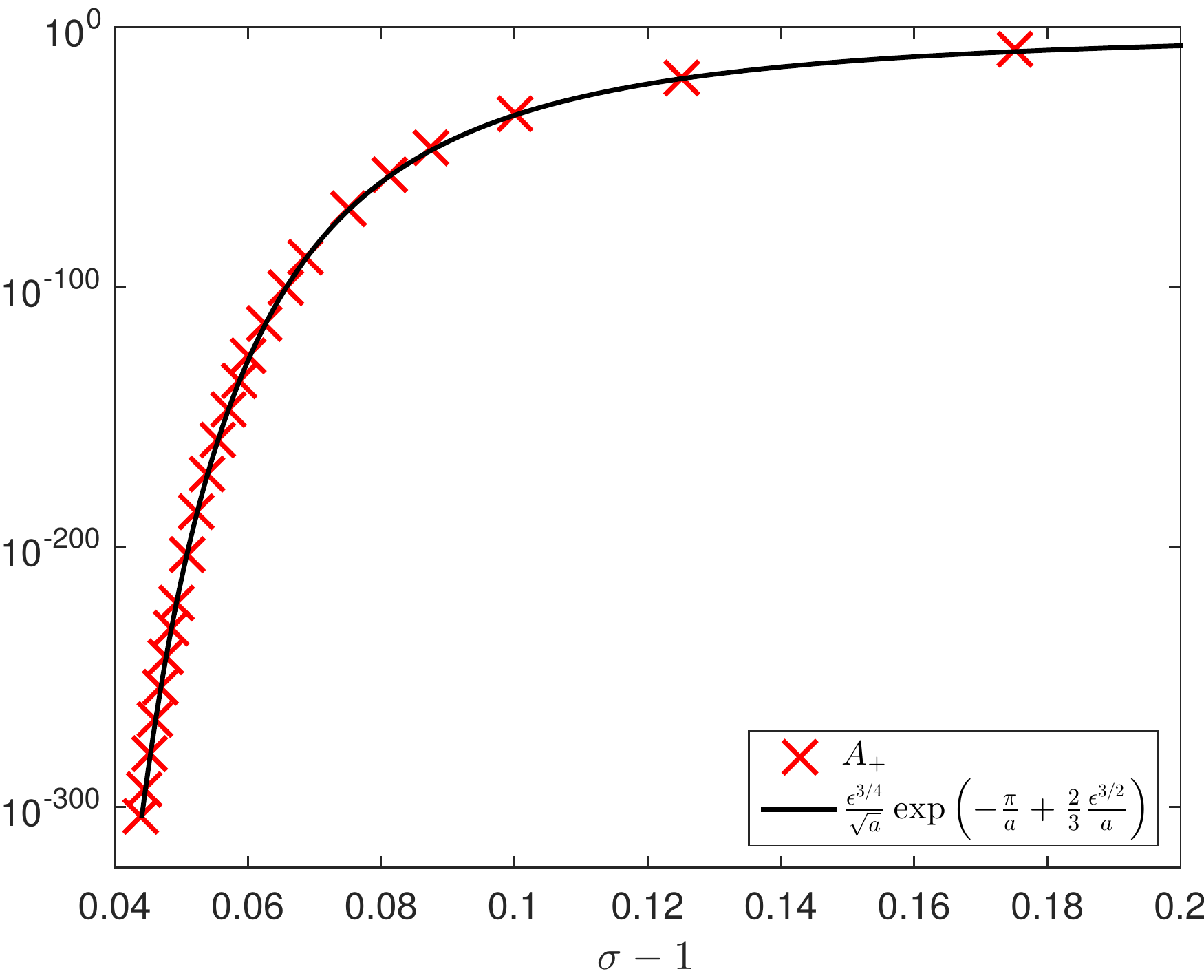}}
  \caption{Numerical verification of \eqref{eqn:AplusFinalRelation}
    relating the coefficient $A_{+}$ to $a$ and $\epsilon$.}
  \label{fig:AplusRelation}
\end{figure}
In Figure \ref{fig:AplusRelation} we verify the relation
\eqref{eqn:AplusFinalRelation} against the value of $A_+$ extracted
from the numerical integration of the boundary value problem and find
an excellent agreement for a large range of values $\sigma$ from
$\sigma = 1.2$ up to the limit of our computation at $\sigma = 1.044$.

\begin{proposition}\label{prop:psi(0)}
  To leading order in $\sigma$ as $\sigma \rightarrow 1$, the
  derivative of the phase at the origin, $\psi(0)$, is given by
  \begin{align}\label{eqn:psi0AsymptoticForm}
    \psi(0) \approx - \tfrac{ \pi a }{8} \left(\sigma - 1\right) .
  \end{align}
\end{proposition}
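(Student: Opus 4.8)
The plan is to read off $\psi(0)$ from the exact first integral of the phase equation \eqref{eqn:Psi_PhaseAmp}: multiplying \eqref{eqn:Psi(A)} through by $A^2(\xi)$ gives
\[
A^2(\xi)\,\psi(\xi) \;=\; A^2(0)\,\psi(0) \;+\; \frac{a(\sigma-1)}{2\sigma}\int_0^\xi A^2(\eta)\,d\eta ,
\]
which I would evaluate for $\xi$ in the overlap region $\epsilon^{-1/2}\ll\xi\ll\epsilon/a$, nonempty precisely under the standing Ansatz $\epsilon\gg a^{2/3}$. In that region two facts are available as $\sigma\to 1$: the modified profile $P$ is essentially real (Figure \ref{fig:PsiMultiTurning}, and the reduction of \eqref{eqn:AmpEqnEpsilon} to the soliton equation \eqref{SolitonEqn} explained in Section \ref{sect:Observations}), so $A^2(\xi)\psi(\xi)$ is negligible next to the remaining terms; and the amplitude is well approximated by the cubic lump \eqref{LumpSigma} with $\sigma=1$, i.e. $A(\xi)\approx L_1(\xi)=\sqrt{8/(1+4\xi^2)}$, with $B_\sigma\to L_1$ in this range.

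Granting this, I would first pass to the limit in the integral: since $A\to L_1$ on the lump region and $L_1^2$ is integrable at infinity, $\int_0^\xi A^2\to\int_0^\infty L_1^2 = \tfrac12\|L_1\|_{L^2}^2 = 2\pi$ (consistent with the value $\sqrt{4\pi}$ for the lump's $L^2$-norm recorded in the introduction). Dropping the left-hand side then leaves $0\approx A^2(0)\,\psi(0)+\tfrac{a(\sigma-1)}{2\sigma}\cdot 2\pi$, and since $A^2(0)\to L_1(0)^2=8$ while $\sigma\to1$ in the prefactor, this yields $\psi(0)\approx -\tfrac{a(\sigma-1)}{2}\cdot\tfrac{2\pi}{8}=-\tfrac{\pi a}{8}(\sigma-1)$, which is \eqref{eqn:psi0AsymptoticForm}.

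The main obstacle is justifying that $A^2(\xi)\psi(\xi)$ is genuinely $o\!\big(a(\sigma-1)\big)$, and not of the same order, at the chosen $\xi$. A way to see this without circularity is instead to take $\xi\to+\infty$ in the displayed first integral and invoke Proposition \ref{prop:Asymptotics} with the phase relation \eqref{e:phase_relation}, which give $A^2(\xi)=A_+^2\xi^{-1/\sigma}(1+O(\xi^{-1}))$ and $\psi(\xi)=\tfrac{a\xi}{2}-\tfrac{b}{2}-\tfrac1{a\xi}+O(\xi^{-2})$; then both divergent quantities $A^2(\xi)\psi(\xi)$ and $\tfrac{a(\sigma-1)}{2\sigma}\int_0^\xi A^2$ have the same leading behaviour $\tfrac a2 A_+^2\xi^{1-1/\sigma}$ — a cancellation forced by the identity, so that the fine structure of the subleading asymptotics is not needed — and the surviving $\xi$-independent remainder is exactly $-\tfrac{a(\sigma-1)}{2\sigma}C_0$, where $C_0=\int_0^\infty\big(A^2-\text{(algebraic tail)}\big)$ is a convergent integral. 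As $\sigma\to1$ the tail correction is controlled by $A_+$, which is exponentially small by Proposition \ref{prop:APlusRelation}, so $C_0\to\int_0^\infty L_1^2=2\pi$ and one recovers the same conclusion. In either route the step that remains formal, and is the crux, is the quantitative convergence $Q\to L_1$ on the lump region; this we take from the matched-asymptotics picture of Section \ref{sect:Observations} and corroborate numerically, a fully rigorous version requiring error estimates on that approximation.
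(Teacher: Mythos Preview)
Your proposal is correct, and your second route---evaluating the first integral $A^2(\xi)\psi(\xi)=A^2(0)\psi(0)+\tfrac{a(\sigma-1)}{2\sigma}\int_0^\xi A^2$ at large $\xi$, using the asymptotics $A\sim A_+\xi^{-1/2\sigma}$ and $\psi\sim a\xi/2$, and then invoking the exponential smallness of $A_+$ from Proposition~\ref{prop:APlusRelation} to obtain \eqref{eqn:psi0Form_with_k} with $k\to 2\pi$ and $A^2(0)\to 8$---is exactly the paper's proof. Your first route (evaluating at intermediate $\xi$) is, as you yourself flag, circular without the second route to back it up, and the paper does not attempt it.
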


\begin{proof}
  We turn to the relation \eqref{eqn:Psi(A)} between the phase
  derivative $\psi$ and the amplitude $A$. Take $\xi_0 > \frac{4}{a}$
  sufficiently large so that
  $|Q (\xi_0)| \approx A_{+} \xi^\frac{ -1}{2\sigma}$,
  $\psi(\xi_0) \approx \frac{a \xi_0}{2}$, and denote
  $k \equiv \int_{0}^{\xi_0} A^2$. For $\xi > \xi_0$ we approximate
  \eqref{eqn:Psi(A)} by
  \begin{equation*}
    \begin{split}
      \psi(\xi) &\approx \frac{ \psi(0) A^2(0)}{A_{+}^2} \xi^\frac{1}{\sigma} + \frac{a(\sigma-1)k}{2\sigma A_{+}^2} \xi^\frac{1}{\sigma} + \frac{a (\sigma-1)}{2\sigma A_{+}^2} \xi^\frac{1}{\sigma} \int_{\xi_0}^{\xi} A_{+}^2 \eta^\frac{-1}{\sigma}d\eta \\
      &= \left(\frac{1}{A_{+}^{2}}\left(\psi(0) A^2(0) +
          \frac{a(\sigma-1)k}{2\sigma}\right) - \frac{a}{2}
        \xi_0^{1-\frac{1}{\sigma}}\right) + \frac{a\xi}{2}.
    \end{split}
  \end{equation*}
  Since $A_{+}$ decays exponentially fast, we have
  \begin{align}\label{eqn:psi0Form_with_k}
    \psi(0) \approx - \frac{a(\sigma-1) k }{2\sigma A^2(0)} .
  \end{align}
  The main contribution to the integral $k$ comes from the region
  where the amplitude is approximated by the soliton, therefore to
  leading order as $\sigma \rightarrow 1 $ we have
  $k \approx \int_{0}^{\infty} B_\sigma^2 \approx 2\pi$ and
  $A^2 (0) \approx L_1 (0) = 8$.
\end{proof}
\begin{figure}[h]
  \centerline{\includegraphics[height=8cm]{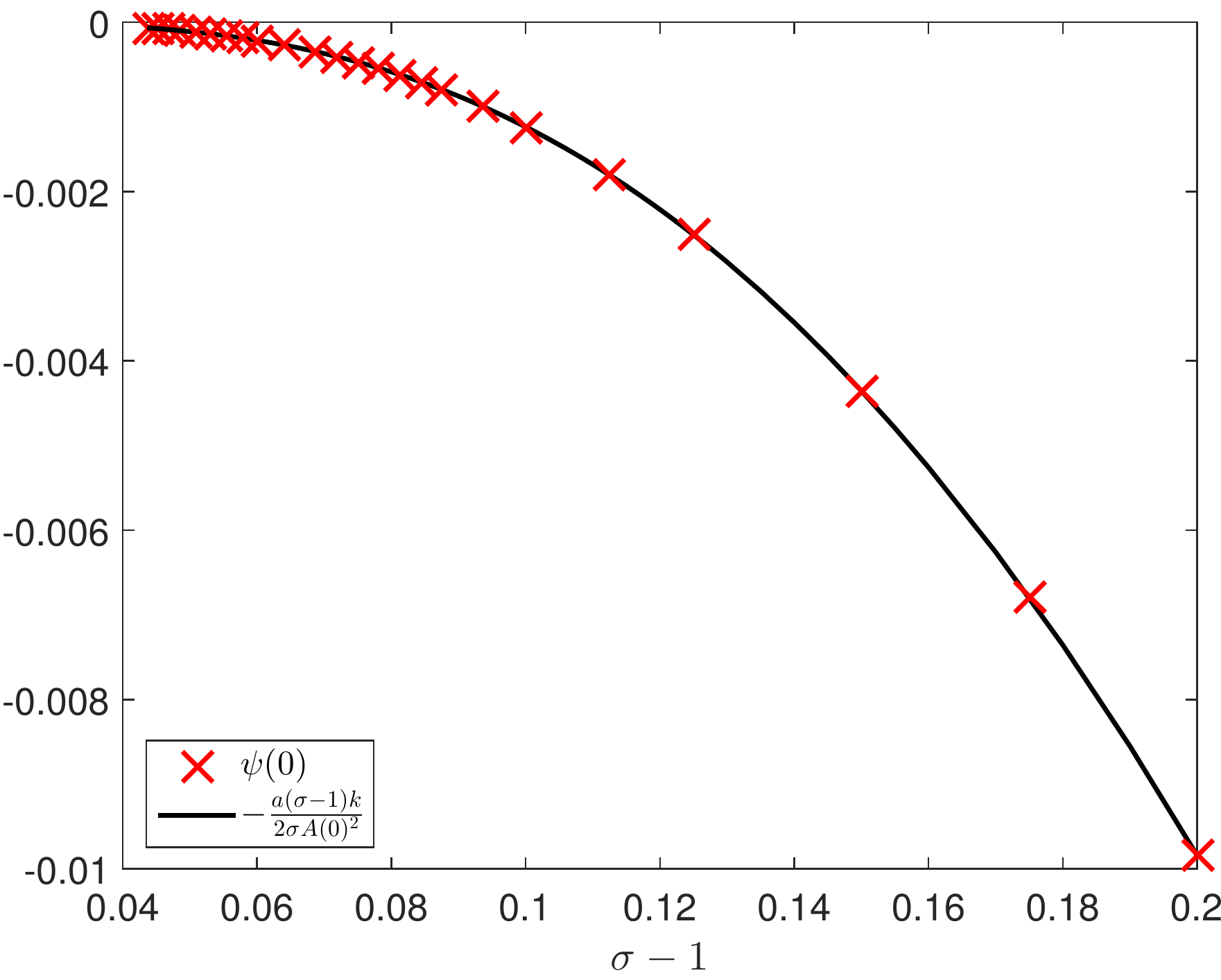}}
  \caption{Numerical verification of relation
    \eqref{eqn:psi0Form_with_k} for $\psi(0)$ for a range of values of
    $\sigma$. }
  \label{fig:psi0Relation}
\end{figure}
Figure \ref{fig:psi0Relation} confirms the relation
\eqref{eqn:psi0Form_with_k} against the numerical simulations, again
finding excellent agreement. We check relation
\eqref{eqn:psi0Form_with_k} rather than \eqref{eqn:psi0AsymptoticForm}
because the values of $\sigma$ at which we compute are insufficiently
close to one for the constant $k$ to have reach its limiting value.

\subsection{Asymptotic analysis of the profile for  $\xi < 0$}\label{sect:xi<0}

\subsubsection{Asymptotics of the parameter $A_-$}

\begin{proposition}\label{prop:Aneg}
  To leading order in $\sigma$ as $\sigma \rightarrow 1$, the
  coefficient $A_{-}$ defined in \eqref{eqn:Asymptotics2ndOrder} is
  given by
  \begin{align}\label{eqn:AminAymptoticForm}
    A_{-} \approx \sqrt{4\pi \left(\sigma-1\right)} .
  \end{align}
\end{proposition}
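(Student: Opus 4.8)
The plan is to reprise the far-field argument of Proposition~\ref{prop:psi(0)}, but on the negative half-line $\xi\to-\infty$ and carried one order further, so that the coefficient $A_{-}$ appears. The starting point is the exact first integral \eqref{eqn:Psi(A)}, which I rewrite as $A^{2}\psi=A^{2}(0)\psi(0)+\tfrac{a(\sigma-1)}{2\sigma}\int_{0}^{\xi}A^{2}$. I would evaluate both sides as $\xi\to-\infty$: beyond the turning point $\xi_{-}=-\epsilon/a$ of \eqref{e:turning}, the leading asymptotics \eqref{eqn:Asymptotics2ndOrder} give $A^{2}(\xi)\approx A_{-}^{2}|\xi|^{-1/\sigma}$ and $\psi(\xi)\approx a\xi/2$ (Section~\ref{sect:Observations}), so the left side behaves like $-\tfrac12 a\,A_{-}^{2}|\xi|^{(\sigma-1)/\sigma}$. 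Solving for the integral and substituting $\psi(0)$ from \eqref{eqn:psi0Form_with_k} — which collapses the $\psi(0)$-term exactly to $k=\int_{0}^{\xi_{0}}A^{2}\approx2\pi$ — one obtains, to leading order as $\sigma\to1$,
\[
  \int_{0}^{\xi}A^{2}(\eta)\,d\eta\;\approx\;k-\frac{A_{-}^{2}}{\sigma-1},
\]
valid for $\xi$ a bounded multiple of $\xi_{-}$ beyond the turning point; here I have used that $|\xi_{-}|=\epsilon/a$ grows only polynomially in $(\sigma-1)^{-1}$ (the observed power laws for $a$ and $\epsilon$), so that $|\xi|^{(\sigma-1)/\sigma}=1+o(1)$ and the slowly decaying tail on $(\xi,\xi_{-})$ contributes only $O(\sigma-1)$.

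The second ingredient is a direct evaluation of the same integral. On $(\xi_{-},0)$ the amplitude is well approximated by the lump \eqref{LumpSigma}, which at $\sigma=1$ is $L_{1}(\xi)=\sqrt{8/(1+4\xi^{2})}$, with $\int_{0}^{-\infty}L_{1}^{2}=-2\pi$; the narrow transition layer around $\xi_{-}$ and the algebraic tail contribute only lower-order corrections. Equating $-2\pi\approx k-A_{-}^{2}/(\sigma-1)$ and inserting $k\approx2\pi$ yields $A_{-}^{2}\approx 4\pi(\sigma-1)$, i.e. \eqref{eqn:AminAymptoticForm}. Heuristically, $A_{-}^{2}/(\sigma-1)$ comes out equal to $\int_{0}^{\infty}B_{\sigma}^{2}+\int_{-\infty}^{0}L_{1}^{2}=2\pi+2\pi$, the full $L^{2}$-mass of the lump.

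The main obstacle is the step that replaces $\int_{0}^{\xi}A^{2}$ by the lump value: one must argue that the transition region around $\xi_{-}$, where the phase switches on and the amplitude departs from $L_{1}$ (the feature the paper flags as an open problem), together with the slowly-decaying algebraic tail, contribute $o(1)$ relative to $2\pi$ — and, crucially, that the answer is insensitive to the precise cut-off $\xi$, because the explicit $\tfrac{\sigma}{\sigma-1}A_{-}^{2}|\xi|^{(\sigma-1)/\sigma}$ term above exactly tracks $\int_{\xi_{-}}^{\xi}A_{-}^{2}|\eta|^{-1/\sigma}\,d\eta$, so the two $\xi$-dependences cancel in the final balance. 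A secondary, bookkeeping point is to confirm that the constant-order pieces discarded in passing to $\psi\approx a\xi/2$ (for instance the $-b/2$ offset in $\psi-a\xi/2$) are genuinely subleading once one knows $A_{-}^{2}=O(\sigma-1)$, exactly as in the proof of Proposition~\ref{prop:psi(0)}.
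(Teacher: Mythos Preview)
Your approach is correct and essentially the same as the paper's: both evaluate the first integral \eqref{eqn:Psi(A)} at large negative $\xi$, substitute $\psi(0)$ from Proposition~\ref{prop:psi(0)}, split $\int_0^\xi A^2$ into the soliton region $(\xi_-,0)$ contributing $-l\approx-2\pi$ and a WKB tail, and balance to obtain $A_-^2\approx(\sigma-1)(k+l)\approx 4\pi(\sigma-1)$. The only minor difference is that the paper uses the refined WKB amplitude \eqref{eqn:A_y<1_approximation} for the tail and sends $\xi\to-\infty$ (setting the coefficient of $|\xi|^{1/\sigma}$ to zero) to first obtain the numerically checkable intermediate relation $A_-^2 = a^{1-1/\sigma}(k+l)(1-\tfrac{1}{\sigma})$, whereas you work at a finite multiple of $\xi_-$ and invoke $|\xi|^{(\sigma-1)/\sigma}\to 1$ directly.
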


\begin{proof}
  For sufficiently large $|\xi|$, $\xi<0$, the function $S$ satisfies
  \eqref{eqn:S_xigg1} or, equivalently, defining
  $y = -\frac{ 1}{2}(a\xi - b)$,
  \begin{align}\label{eqn:S_y}
    \tfrac{a^2}{4} S_{yy} = (1-y^2) S .
  \end{align}
  Using WKB, we have for $y-1 \gg a^\frac{2}{3}$ (equivalently
  $|\xi - \frac{ \eps}{a}| \gg a^\frac{ -1}{3}$)
  \begin{align*}
    \arg (S) \approx \frac{ \pi}{4} + \frac{2}{a} \int_{1}^{y} \sqrt{s^2 - 1} ds
  \end{align*}
  from which it follows
  \begin{align}\label{eqn:psiNegSqrt}
    \psi \approx - \sqrt{y^2 - 1}, \text{  and  } A \approx \frac{ \sqrt{\frac{a}{2}} A_{-}}{(y^2-1)^\frac{1}{4}}  .
  \end{align}
  We improve the approximation of the amplitude by using
  \eqref{eqn:A^2(psi)}, giving us
  \begin{align}\label{eqn:A_y<1_approximation}
    A \approx \frac{C_{-}}{(y^2-1)^\frac{1}{4}} \left(y + \sqrt{y^2 -1}\right)^\frac{ \sigma - 1}{2\sigma}.
  \end{align}
  When $\xi \rightarrow - \infty$, $y \rightarrow + \infty$ and
  \eqref{eqn:A_y<1_approximation} becomes
  $A \approx \sqrt{2} C_{-} a^{-\frac{ 1}{2\sigma} } (-\xi)^{-\frac{
      1}{2\sigma}}$.
  Using Proposition \ref{prop:Asymptotics}, we have
  $A \approx A_{-} (-\xi)^{-\frac{ 1}{2\sigma}}$ when
  $\xi \to -\infty$.  Thus the constants $C_{-}$ and $A_{-}$ are
  related by
  \begin{align*}
    C_{-} = \frac{ a^\frac{1}{2\sigma}}{\sqrt{2}} A_{-} .
  \end{align*}
  Returning to equation \eqref{eqn:Psi(A)} for large negative
  $|\xi| \gg \frac{1}{a}$ and approximating $A$ by its
  asymptotic behaviour we write
  \begin{equation}\label{eqn:psi_neg_integrals}
    \begin{split}
      \psi(\xi) \approx &\frac{ \psi(0) A^2(0)}{2\sigma A_{-}^2}
      |\xi|^\frac{1}{\sigma} + \frac{a(\sigma - 1) }{2\sigma A_{-}^2}
      \left(\int_{0}^{-\frac{\epsilon}{a} - a^{-1/3}}
        A^2\right)|\xi|^\frac{1}{\sigma} \cr & - \left(1-
        \frac{1}{\sigma}\right) \frac{C_{-}^2}{A_{-}^2}
      |\xi|^\frac{1}{\sigma} \int_{1 + a^{2/3}}^{\infty} \frac{
        \left(y^\prime + \sqrt{y^{\prime^2} -
            1}\right)}{\sqrt{y^{\prime^2} - 1}} dy^\prime .
    \end{split}
  \end{equation}
  We do not have a precise behaviour of the profile in the relatively
  small region between $-\frac{ \eps}{a}$ and
  $-\frac{\eps}{a} - a^{-\frac{1}{3}}$, but we have numerically
  verified that the contribution of this small region to the above
  integrals is negligible compared to the contribution of the interval
  $(0, -\frac{ \eps}{a})$. Denoting
  $l = \int_{-\frac{\epsilon}{a}}^{0} A^2$ and using the expression
  \eqref{eqn:psi0Form_with_k} for $\psi(0)$, we write
  \begin{equation}
    \begin{split}
      \psi(\xi) &\approx - \frac{a (\sigma-1)(k+l)}{2\sigma A_{-}^2} |\xi|^\frac{1}{\sigma} - \left(1-\tfrac{1}{\sigma}\right) \frac{C^2}{A_{-}^2} \int_{1}^{y} \frac{ \left(y^\prime + \sqrt{y^{\prime^2} - 1}\right)}{\sqrt{y^{\prime^2} - 1}} dy^\prime  \\
      &=\frac{1}{2} \left(a^\frac{1}{\sigma} - \frac{a(k+l)}{A_{-}^2}
        \left(1- \tfrac{1}{\sigma}\right)\right)
      |\xi|^\frac{1}{\sigma} + \frac{a\xi}{2}.
    \end{split}
  \end{equation}
  Since $\psi(\xi) \sim \frac{a \xi}{2}$ as
  $\xi \rightarrow - \infty$, the coefficient of
  $|\xi|^\frac{1}{\sigma}$ vanishes and
  \begin{align}\label{eqn:AminFormWith_k_and_l}
    A_{-}^2 = a^{1- \frac{1}{\sigma}} (k+l) \left(1- \tfrac{1}{\sigma}\right) .
  \end{align}

  \begin{figure}[h]
    \includegraphics[height=8cm]{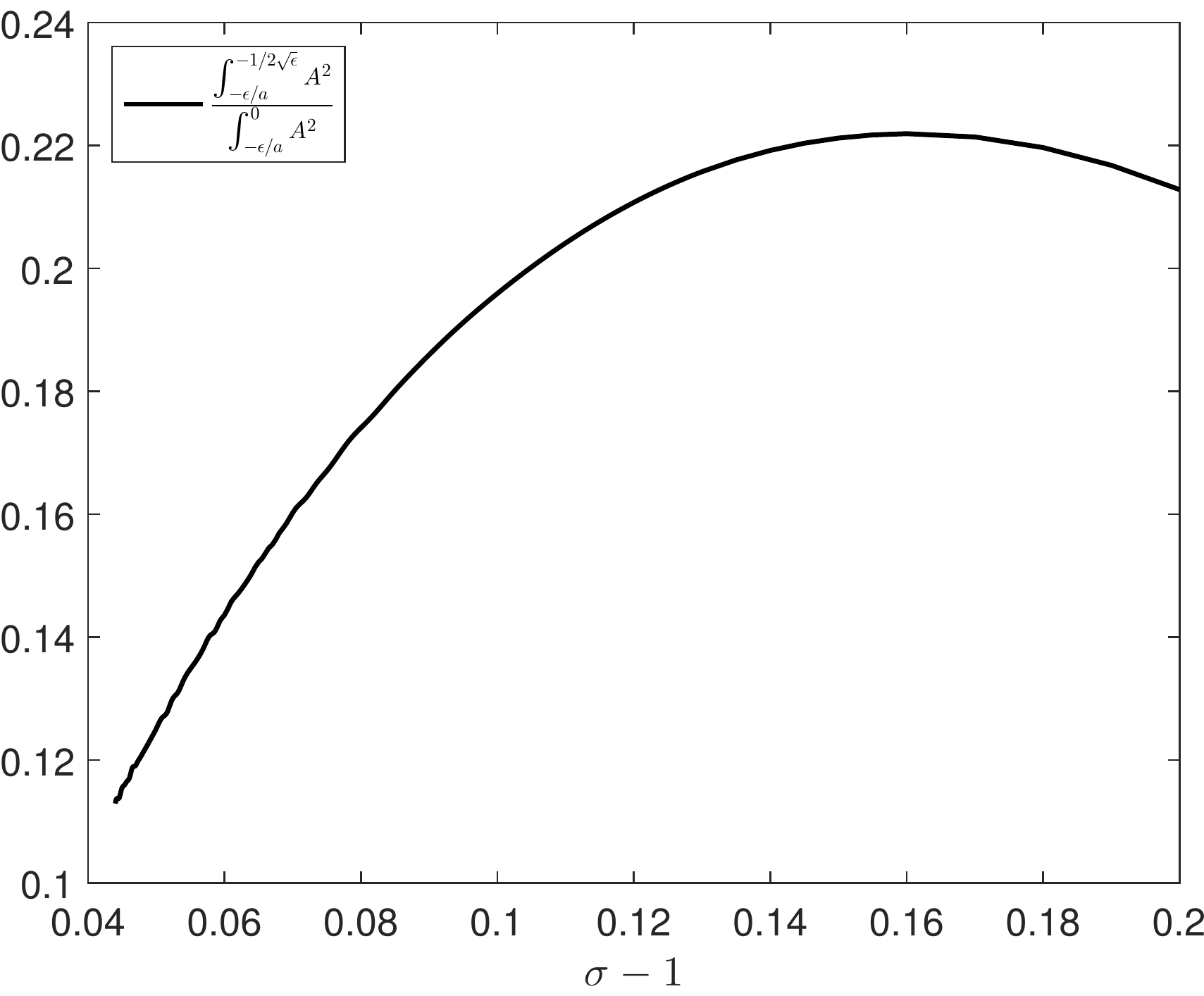}
    \caption{An estimate of the relative contribution of the region
      $\xi \in \bracket{-\eps/a, -1/{2\sqrt{\eps}}}$ (where the
      precise behaviour of the profile remains unknown) to the
      integral $l=\int_{-\eps/a}^{0} A^2$ appearing in
      \eqref{eqn:AminFormWith_k_and_l}}
    \label{fig:Amin_Integral_Relative_Soliton}
  \end{figure}

  In the limit $\sigma \rightarrow 1$, $k \rightarrow 2\pi$ (see
  Proposition \ref{prop:psi(0)}) while the main contribution to the
  integral $l = \int_{\eps/a}^0 A^2$ comes from the region
  $\paren{- \frac{1}{2\sqrt{\eps}}, 0}$ where the DNLS soliton
  \eqref{BrightSigma} approximates $A$ (see Figure
  \ref{fig:Amin_Integral_Relative_Soliton}). Therefore
  $l \approx \int_{-\infty}^{0} L_1^2 = 2\pi$ and the relation
  \eqref{eqn:AminAymptoticForm} follows.  In Figure
  \ref{fig:AminRelation}, we observe an excellent agreement of the
  numerical simulation with the formula
  \eqref{eqn:AminFormWith_k_and_l}. Similar to our result for
  $\psi(0)$, we check \eqref{eqn:AminFormWith_k_and_l} rather than
  \eqref{eqn:AminAymptoticForm} since for the values of $\sigma$ we
  computed, the integrals $k$ and $l$ have not reached their limiting
  values.
  \begin{figure}[h]
    \includegraphics[height=8cm]{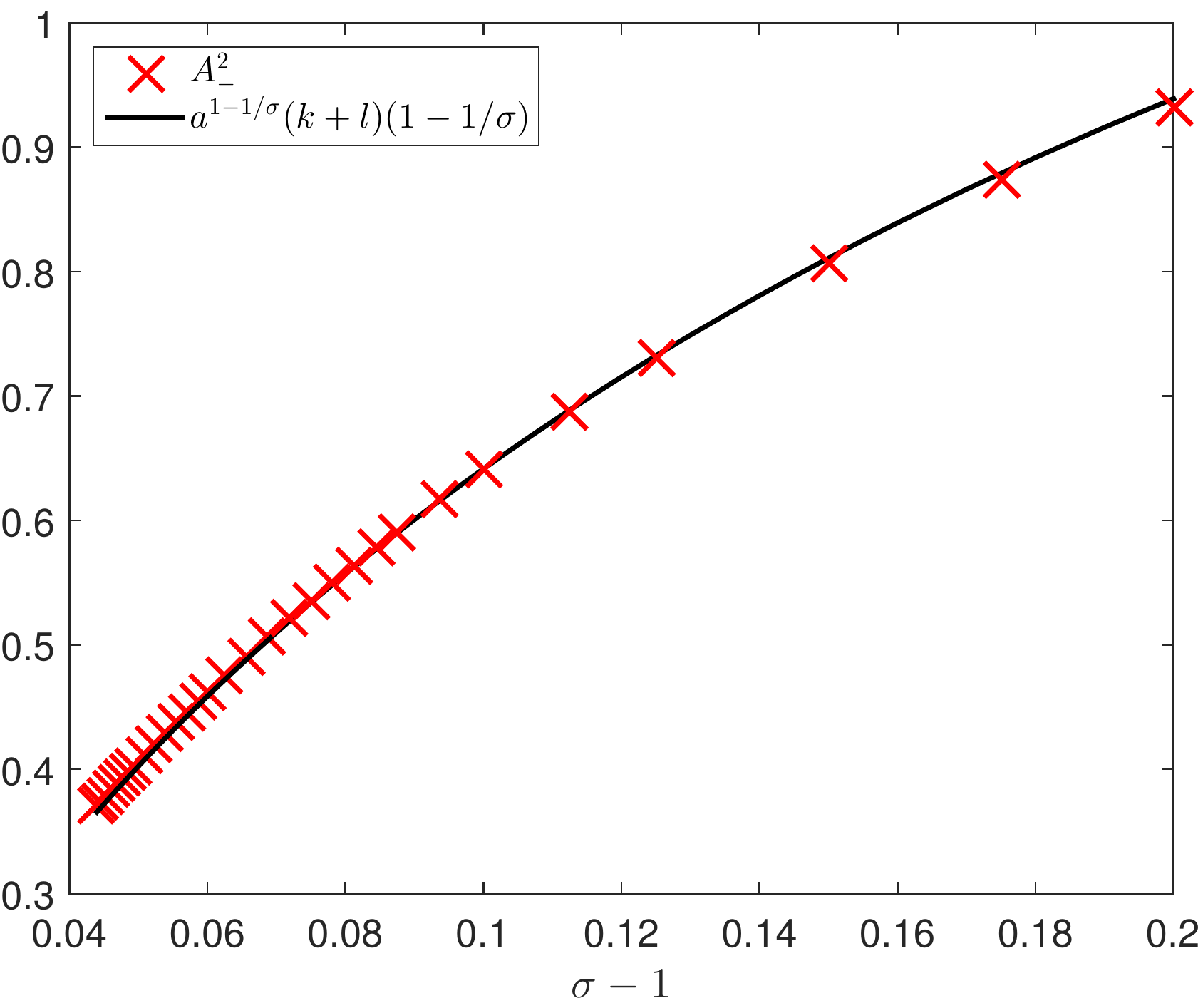}
    \caption{Numerical verification of
      \eqref{eqn:AminFormWith_k_and_l} for  $A_{-}$
      describing the asymptotic behaviour of $Q$ as
      $\xi \rightarrow - \infty$. }
    \label{fig:AminRelation}
  \end{figure}

\end{proof}

\subsubsection{Variation of turning point $\xi_-$ in terms of
  $\sigma$. } \label{sect:TPNumerics}

In the last section, we obtained the function $Q$ for negative values
of $\xi$ that satisfy conditions of validity for the WKB method,
namely $\xi < - \frac{\eps}{a}$ and
$|\xi + \frac{\eps}{a}| > a^{-1/3}$. We also know that for $\xi < 0$
with $|\xi| \ll \frac{\eps}{a}$, the amplitude is well approximated by
the DNLS soliton while the phase derivative $\psi$ remains small. In
order to match these behaviours we need to approximate $Q$ in the
intermediate region near $\xi \sim - \frac{\eps}{a}$.  Unlike nearby
the positive turning point $\xi_+ = \frac{4}{a}$, the problem here is
fully nonlinear. The equation satisfied by $P$ reduces to
\begin{align}\label{e:TPNum_LNLBalance}
  P_{\xi\xi} \approx \paren{a\xi + \eps} P + |P|^2 P
\end{align}
where both of the terms on the right hand side must be taken into
account. This equation can be transformed to one resembling a type II
Painlev\'e equation by setting $t = a^{-2/3} (a\xi + \eps)$ and
$u = a^{-1/3} 2^{-1/2}P$:
\[
u_{tt} = tu + 2 |u|^2 u.
\]
The nonlinearity however is of the form $|u|^2u$ rather than the
Painlev\'e $u^3$ and known results about approximate solutions to Painlev\'e do not apply.

Instead, we turn to our numerical data and examine the behaviour of
the turning point as a function of $\sigma$. We will show in Section
\ref{sect:Hamiltonian} that the parameter $\eps$ behaves as a power
law in $\paren{\sigma-1}$ in the limit $\sigma \to 1$ and therefore
make the Ansatz for $\xi_-$:
\begin{align}\label{e:TP_Model}
  \frac{a}{\eps} \approx C \paren{\sigma-1}^{\alpha}, \quad \sigma \to 1.
\end{align}
We use a standard least squares algorithm to compute $C$ and
$\alpha$ and find $C \approx 4$ while $\alpha \approx 1.2$.  Figure
\ref{fig:TP_GoF.} illustrates the goodness of the fit for
$\sigma \in \bracket{1.044, 1.1}$ with $C$ and $\alpha$ obtained from a Richardson extrapolation of the values of $a$ and $\eps$ from
simulations with $N = 2.56 \times 10^6$ and $N = 5.12 \times 10^6$
mesh points and $\sigma \in \bracket{1.044, 1.1}$. To check the
validity of this Ansatz, we change the range of $\sigma$ values
considered for the least square computation by restricting $\sigma$ to
$\bracket{1.044, \sigma_{max}}$ and varying $\sigma_{max}$. We do this
for data obtained from simulations performed at several different
resolutions and report the values obtained in Table
\ref{tab:TP_param}. In the worst case, we observe relative differences
in the values of $C$ and $\alpha$ at the order of $0.1 \%$.
  
\begin{figure}[h]
  \includegraphics[height=8cm]{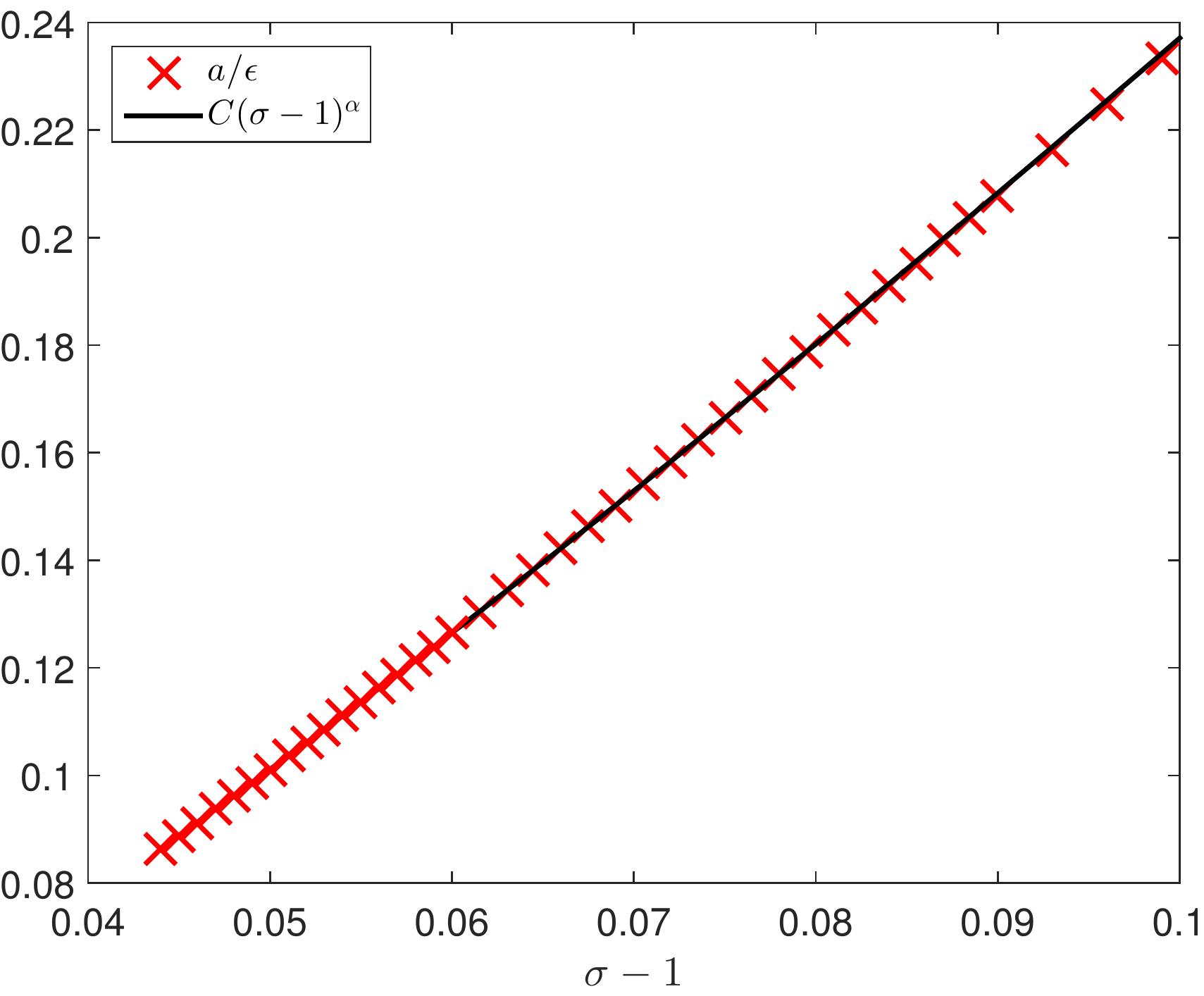}
  \caption{A numerical test of model \eqref{e:TP_Model} over a range
    of $\sigma$ values.  The values of $C$ and $\alpha$ were computed
    using a least square analysis Within this range of $\sigma$, we
    find $C \approx 4.03$ and $\alpha \approx 1.23$.  }
  \label{fig:TP_GoF.}
\end{figure}

\begin{table}[h]
  \caption{Computed values of parameters $\alpha$, $C$  in
    \eqref{e:TP_Model}. Left: using simulations with $N  = 5.12 \times
    10^6$ and $N = 2.56 \times 10^6$ mesh points. Right: using
    simulations with $N = 1.28 \times 10^6$ and $N = 2.56 \times 10^6$
    mesh points. 	 
  } 
  \label{tab:TP_param}

  \centering
  \pgfplotstableset{% global config, for example in the preamble
    % these columns/<colname>/.style={<options>} things define a style
    % which applies to <colname> only.
    every head row/.style={before row=\toprule,after row=\midrule},
    every last row/.style={after row=\bottomrule}
  }
  \pgfplotstabletypeset[% local config, applies only for this table
  1000 sep={\,},
  columns/smax/.style={
    fixed,fixed zerofill,precision=3,
    column type=r, column name = {$\sigma_{max}$}
  },
  columns/alpha/.style={
    fixed,fixed zerofill,precision=4,
    column type=r, column name = {$\alpha$}
  },
  columns/c/.style={
    fixed,fixed zerofill,precision=4,
    column type=r, column name = {$C$}
  }
  ]
  {figs/TP_Param_Smax.txt} \hspace{0.5cm} \pgfplotstabletypeset[% local config, applies only for this table
  1000 sep={\,},
  columns/smax/.style={
    fixed,fixed zerofill,precision=3,
    column type=r, column name = {$\sigma_{max}$}
  },
  columns/alpha/.style={
    fixed,fixed zerofill,precision=4,
    column type=r, column name = {$\alpha$}
  },
  columns/c/.style={
    fixed,fixed zerofill,precision=4,
    column type=r, column name = {$C$}
  }
  ]
  {figs/TP_Param_Smax_128_256.txt}

  \vspace{0.3cm} 

\end{table}

\section{ Vanishing momentum condition}\label{sect:Hamiltonian}

In this Section, we use the zero momentum condition
\eqref{ZeroMomentumQ} to obtain an additional relation between $\eps$
and $\sigma$ in the limit $\sigma \to 1$.  Combined with
\eqref{e:TP_Model}, it gives the main conclusion of this study as
stated in equations \eqref{e:QSimLumpConclusion} and
\eqref{e:relations}.

\begin{proposition}\label{prop:Hamiltonian}
  As $\sigma \rightarrow 1$, the parameter $\epsilon$ satisfies, at
  leading order,
  \begin{align}\label{e:sqrtepsresult}
    \sqrt{\epsilon} \sim C \pi \left(\sigma - 1\right)
  \end{align}
  for some constant $2 < C < \frac{24}{7}$.
\end{proposition}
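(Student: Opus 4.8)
The plan is to start from the vanishing momentum identity $I(Q) = \Im\int_{\R}\bar Q Q_\xi\,d\xi = 0$ of Proposition \ref{prop:ZeroEnergy} and to evaluate it asymptotically as $\sigma\to 1$ by splitting the real line at the turning points. In terms of the amplitude--phase variables, $\Im(\bar Q Q_\xi) = |Q|^2\,\theta_\xi$, and by \eqref{e:phase_relation} we have $\theta_\xi = \psi - \tfrac{1}{2}(a\xi - b) - \tfrac{1}{2\sigma+2}|Q|^{2\sigma}$. Thus
\[
0 = \int_{\R}|Q|^2\Big(\psi - \tfrac{1}{2}(a\xi-b) - \tfrac{1}{2\sigma+2}|Q|^{2\sigma}\Big)d\xi.
\]
I would treat the three contributions separately. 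The $-\tfrac12(a\xi-b)|Q|^2 = \tfrac12(a\xi+\epsilon)|Q|^2$ term is where the parameters $a$ and $\epsilon$ enter; the $|Q|^{2\sigma+2}$ term converges, as $\sigma\to 1$, to $\int L_1^{4}\,d\xi$ computed from the cubic lump $L(\xi)=\sqrt{8/(1+4\xi^2)}$; and the $\psi|Q|^2$ term is controlled using the smallness of $\psi$ (it is $O(a(\sigma-1))$ on the bulk by \eqref{eqn:Psi(A)} and Proposition \ref{prop:psi(0)}) together with the decay estimates from Propositions \ref{prop:APlusRelation}--\ref{prop:Aneg}. The dominant balance should come from pitting the linear-in-$\xi$ term against the sextic term, since $\int\tfrac{b}{2}|Q|^2$ and $\int\tfrac{1}{2\sigma+2}|Q|^{2\sigma+2}$ are both $O(1)$ while $a\int\xi|Q|^2$ is small.

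Concretely, the region $\xi\in(-\epsilon/(2\sqrt\epsilon),\epsilon/a)$ — roughly $|\xi|\lesssim\epsilon^{-1/2}$ — is where $|Q|$ is well approximated by the common bright/lump soliton, contributing the $O(1)$ bulk; on $(-\epsilon/a,0)$ the lump approximation $|Q|\approx (4\cdot 2/(1+4\xi^2))^{1/2}$ holds, and one integrates $\tfrac12(a\xi+\epsilon)L_1^2$ against it. Since $\int_{-\epsilon/a}^{0}\tfrac12(a\xi+\epsilon)\cdot\tfrac{8}{1+4\xi^2}\,d\xi$ picks up a term proportional to $\epsilon\int_{-\infty}^{0}\tfrac{8\,d\xi}{1+4\xi^2} = \epsilon\cdot\pi$ at leading order (the $a\xi$ part being lower order on this interval since $a/\epsilon\to 0$), and the right turning point region $(0,\xi_+)$ contributes negligibly because $|Q|$ decays super-exponentially there (the WKB estimate behind Proposition \ref{prop:APlusRelation}), the momentum identity reduces at leading order to an equation of the form $\pi\epsilon \approx (\text{const})\cdot\big(\int L_1^4 - \tfrac12\int L_1^2\big)\,(\sigma-1)$... — here I would be careful: the factor $(\sigma-1)$ must arise from the mismatch between the $|Q|^2$ and $|Q|^{2\sigma}$ normalizations of the lump (recall $L_\sigma = (4(\sigma+1)/(1+4\sigma^2\xi^2))^{1/2\sigma}$, which differs from $L_1$ at $O(\sigma-1)$) and from the $\tfrac{1}{2\sigma+2}$ vs $\tfrac14$ prefactor, both of which produce $O(\sigma-1)$ corrections to the otherwise-cancelling $O(1)$ pieces. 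Tracking this cancellation carefully is the crux: at $\sigma=1$ the lump has exactly zero momentum, so the $O(1)$ terms must cancel identically, and $\sqrt\epsilon\sim C\pi(\sigma-1)$ emerges from the first-order Taylor expansion in $(\sigma-1)$ of the full momentum functional evaluated on the deformed profile. The constant $C$ is pinned between $2$ and $24/7$ by bounding the first-order coefficient using the two profile approximations (lump on the left, bright on the right) as upper and lower envelopes of the true $|Q|$ on the transition region near $\xi_-$, where the exact profile is unknown.

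The main obstacle is exactly this intermediate region near $\xi_-=-\epsilon/a$, discussed around \eqref{e:TPNum_LNLBalance}: there is no closed-form profile, only the Painlevé-type balance $u_{tt}=tu+2|u|^2u$, so the contribution of $\xi\in(-\epsilon/a,\,-\tfrac12\epsilon^{-1/2})$ to $\int\tfrac12(a\xi+\epsilon)|Q|^2$ and to $\int|Q|^{2\sigma+2}$ cannot be evaluated exactly. I would handle it by trapping $|Q|$ between the lump and bright soliton values on this window — they nearly coincide for $\xi\ll\epsilon^{-1/2}$ by the Remark, and the numerical evidence in Figure \ref{fig:Amin_Integral_Relative_Soliton} shows the region's relative contribution is small — and by showing the resulting interval of possible values for the leading coefficient is $(2,\tfrac{24}{7})$ after multiplying through by $\pi$. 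The inequality $C>2$ should come from the lump lower bound and $C<\tfrac{24}{7}$ from the bright upper bound (or vice versa), each reducing to an elementary rational integral; I would not grind these out here but would flag that the endpoints $2$ and $24/7 = 2 + 2/7$ are precisely $\int_{-\infty}^0 L_1^2/(\pi)$-type normalizations against the two envelopes. Combining the resulting $\sqrt\epsilon\sim C\pi(\sigma-1)$ with the turning-point Ansatz \eqref{e:TP_Model} then yields $a\sim C'(\sigma-1)^{\alpha+2}$ with $\alpha\approx 1.2$, i.e. $\gamma_a\approx 3.2$ and $\gamma_b=2$ as claimed in \eqref{e:relations}.
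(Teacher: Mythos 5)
Your starting point (the vanishing momentum $I(Q)=0$, splitting the line at the turning points, soliton approximations in the bulk, and a lump/bright sandwich on the window near $\xi_-$) is the same strategy as the paper's, but two steps as you have set them up would fail. First, you cannot split $\theta_\xi=\psi-\tfrac12(a\xi-b)-\tfrac{1}{2\sigma+2}|Q|^{2\sigma}$ and integrate the three pieces against $|Q|^2$ separately over $\mathbb{R}$: since $|Q|\sim A_{\pm}|\xi|^{-1/2\sigma}$ we have $Q\notin L^2$ (indeed $M(Q)=\infty$), so $\int\tfrac{b}{2}|Q|^2$, $\int\tfrac{a\xi}{2}|Q|^2$ and $\int\psi|Q|^2$ each diverge; only the combined density $\theta_\xi A^2$, which decays like $|\xi|^{-1-1/\sigma}$, is integrable, so it must be kept intact region by region. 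Relatedly, $\psi$ is \emph{not} small for $\xi<\xi_-$ (there $\psi\approx-\sqrt{y^2-1}$ with $y=-\tfrac12(a\xi-b)$), and that far region is not negligible: it contributes $I_1\approx A_{-}^2\approx 4\pi(\sigma-1)$ by Proposition \ref{prop:Aneg}, i.e.\ half of the total coefficient $8\pi(\sigma-1)$ in the momentum balance; your sketch drops this contribution entirely.

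Second, your dominant balance is at the wrong order in $\epsilon$. The term you single out, $\tfrac12\epsilon\int L_1^2\approx\pi\epsilon$, is subleading; the leading $\epsilon$-dependence of the momentum is $O(\sqrt{\epsilon})$, coming from the square root in the bright-soliton momentum, $\int_0^{\infty}\bigl(\tfrac{b}{2}-\tfrac14 B_1^2\bigr)B_1^2\,d\xi=-\sqrt{4-b^2}\approx-2\sqrt{\epsilon}$, and from the region $|\xi|\gtrsim\epsilon^{-1/2}$ where lump and bright separate (for instance the truncated lump integral $\int_{-1/(2\sqrt{\epsilon})}^{0}\bigl(1-\tfrac14 L_1^2\bigr)L_1^2\,d\xi\approx-4\sqrt{\epsilon}$, even though the full half-line integral vanishes). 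Balancing $\pi\epsilon$ against $(\sigma-1)$ as you propose would give $\epsilon\propto(\sigma-1)$, i.e.\ $\gamma_b=1$, contradicting \eqref{e:sqrtepsresult}; you state the correct final form but the mechanism producing $\sqrt{\epsilon}$ is never derived. The paper's computation gives $I_1\approx4\pi(\sigma-1)$, $I_2\approx-2\sqrt{\epsilon}+2\pi(\sigma-1)$ and $I_3\approx-\mu\sqrt{\epsilon}+2\pi(\sigma-1)$ with $\tfrac13<\mu<2$, the last obtained by sandwiching the unknown window $(-\epsilon/a,-\tfrac{1}{2\sqrt{\epsilon}})$ between the lump value $4\sqrt{\epsilon}$ and the bright value $\tfrac73\sqrt{\epsilon}$; imposing $8\pi(\sigma-1)-(\mu+2)\sqrt{\epsilon}=0$ then yields $C=8/(\mu+2)\in\bigl(2,\tfrac{24}{7}\bigr)$. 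So your bracketing idea near $\xi_-$ is the right one, but the endpoints $2$ and $\tfrac{24}{7}$ come from these specific region-by-region numbers (the $8\pi$, the $-2\sqrt{\epsilon}$, the $-\tfrac{13}{3}\sqrt{\epsilon}$ over $(-\tfrac{1}{2\sqrt{\epsilon}},0)$, and the two window bounds), not from an $\int L_1^2/\pi$ normalization heuristic.
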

\begin{proof}
	
  In terms of phase and amplitude, $Q = A e^{i \theta}$, the property
  $I(Q) =0$ has the form
  \begin{align}\label{eqn:zeroenergy_phaseamp}
    I(Q) \equiv \int_{-\infty}^{\infty}\Id(Q)d\xi = \int_{-\infty}^{\infty}  \theta_\xi A^2 d\xi =0.
  \end{align}
  We separate the domain into three regions: {\it (i)}
  $- \infty < \xi \lessapprox \frac{ -\eps}{a}$; {\it (ii)} $\xi >0$;
  {\it (iii)} $\frac{ -\eps}{a} \lessapprox \xi < 0$ and denote $I_1$,
  $I_2$ and $I_3$ the corresponding contributions to $I$. In each
  region, we approximate the phase and amplitude of $Q$ using the
  analysis of the previous sections.
	 
  {\it Region 1:} When $-\infty < \xi \leq -\frac{ \epsilon}{a}$, we
  change variables to $y = -\frac{ 1}{2} (a\xi - b)$ and write
  \begin{equation}\label{eqn:H_1_definition}
    I_1 = \int_{-\infty}^{-\frac{\eps}{a}} \theta_\xi A^2 d\xi = \frac{2}{a} \int_1^\infty \theta_\xi A^2 dy 
  \end{equation}
	 
  For $y - 1 \gg a^\frac{2}{3}$, $A$ and $\theta_\xi$ are well
  approximated by \eqref{eqn:psiNegSqrt}. Since
  $\theta_\xi \approx \psi- \frac{1}{2}(a\xi -b) = \psi+ y$, we have
  \begin{align}\label{R1_approximations}
    A \approx \frac{a^\frac{1}{2\sigma} A_{-}}{\sqrt{2}(y^2 - 1)^\frac{1}{4}} \left(y+\sqrt{y^2-1}\right)^\frac{\sigma-1}{2\sigma} ; \quad 
    \theta_{\xi} \approx y - \sqrt{y^2 - 1} .
  \end{align}
  The contribution of the region
  $ 1 \leq y-1 \leq a^\frac{2}{3}$, equivalently
  $-\frac{\eps}{a} - a^{-\frac{1}{3}} \leq \xi \leq -\frac{\eps}{a}$
  (where the WKB analysis leading to \eqref{R1_approximations} is no
  longer valid) to $I$ is negligible compared to that of the region
  $y>1+a^\frac{2}{3}$. In Figure \ref{fig:R1_data_approx} we compare
  the values of $I_1$ obtained from the numerical integration of the
  solution to the boundary value problem (BVP) to those obtained by
  inserting \eqref{R1_approximations} into
  \eqref{eqn:H_1_definition}. We see a good agreement with a relative
  error of less than 1\%.  The leading order contribution to $I_1$ is
  therefore
  \begin{align}
    \frac{2}{a} \int_{1+a^\frac{2}{3}}^\infty \frac{\frac{a}{2} A_{-}^2 \left( y- \sqrt{y^2-1}\right)}{\sqrt{y^2-1}} dy \approx A_{-}^2. \nonumber
  \end{align}
  Using Propositon \ref{prop:Aneg} we have
  \begin{align}\label{R1_final}
    I_1 \approx 4\pi (\sigma- 1 ), \quad \sigma \to 1.
  \end{align}
  
  {\it Remark. } Our numerical integration of the BVP did not reach
  values of $\sigma$ sufficiently close to $1$ to allow a direct check
  of this relation.
  
  \begin{figure}[h]
    \centerline{\includegraphics[height=8cm]{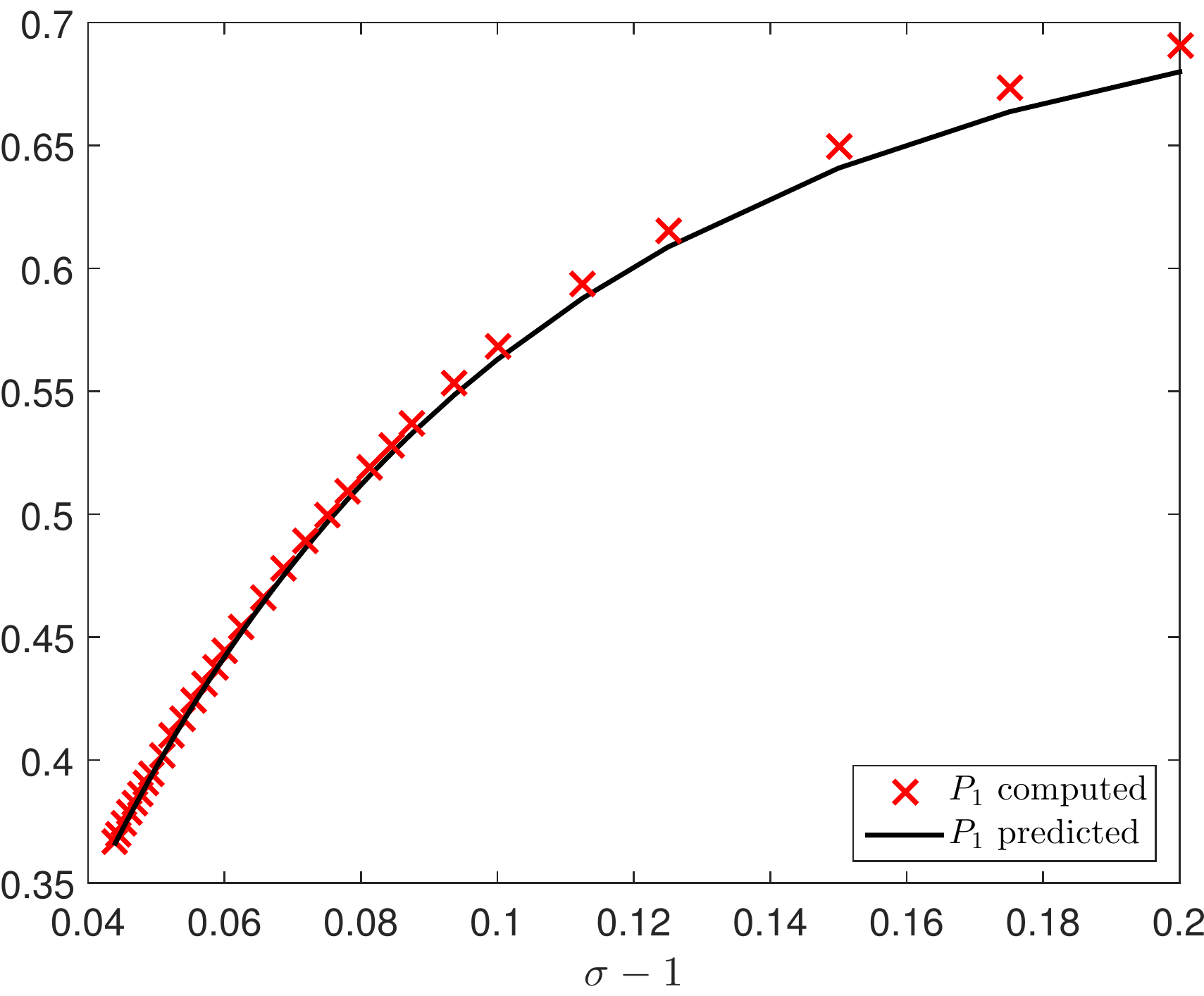}}
    \caption{Comparison of the values of $P_1$ obtained from the
      numerical simulation with those obtained from
      \eqref{R1_approximations}.}
    \label{fig:R1_data_approx}
  \end{figure}

  {\it Region 2:} When $\xi>0$, our simulations tell us that the
  amplitude $A$ is well approximated by the bright soliton
  \eqref{BrightSigma} as long as $\xi \ll \frac{ \eps}{a}$ with its
  region of validity extending at least to $\xi = a^\frac{ -1}{3}$ (
  see Figure \ref{fig:AmpBrightLump}).  For $\xi > \xi_+$, we can
  approximate $A$ using WKB, with an amplitude that is exponentially
  small as $a\to 0$ (see Proposition \ref{prop:APlusRelation}).
  Consequently, the contribution to $I_2$ of the entire region
  $\xi> a^{-1/3}$ is exponentially small; for
  $\xi \in (a^{-1/3}, \xi_+)$, it is small because
  \[
  A(\xi) \le B_\sigma(a^\frac{ -1}{3}) \approx \sqrt{\eps} \exp
  \left(-\sqrt{\eps} a^\frac{ -1}{3}\right),
  \]
  while for $\xi > \xi_+$, it will be small exponentially small
  because $A_+$ is small (Proposition \ref{prop:APlusRelation}).  In
  both regimes, we can make use of \eqref{e:TP_Model}, relating $a$ to
  $\eps$ and $\sigma-1$.  The consequence of this analysis is that the
  leading order contribution to $I_2$ is in $\xi < a^{-1/3}$, where
  the amplitude and phase derivative can be approximated by
  \begin{equation}\label{R3_approximations}
    A \approx B_\sigma = \paren{\frac{ (\sigma+1)(4-b^2)}{2 \left(\cosh \sigma \sqrt{4 - b^2}\xi - \frac{b}{2}\right)}}^\frac{1}{2\sigma}; \quad 
    \theta_{\xi} \approx \frac{b}{2} - \frac{1}{2\sigma+2} B_\sigma^{2\sigma}.
  \end{equation}
  We have omitted the $a\xi/2$ term from the phase derivative because
  it's contribution to the integral for $\xi <a^{-1/3}$ is
  $\bigo(a^{1/3})\ll \sqrt{\eps}$.  Under the assumption that
  $\sqrt{\eps} \propto \sigma-1$, which will be our conclusion,
  $\bigo(a^{1/3})\ll \sigma-1$, so the term will be small relative to
  the main contributions to the integral, which are
  $\bigo(\sqrt{\eps})$ and $\big(\sigma-1)$.  Thus, the contribution
  of this region to the momentum is approximated as
  \begin{equation}
    \begin{split}
      I_2\approx \int_0^{a^{-1/3}} \Id &\approx
      \int_0^{a^{-1/3}} \paren{\frac{b}{2} -
        \frac{1}{2\sigma+2} B_\sigma^{2\sigma}} B_\sigma^{2\sigma} \\
      &\approx \int_0^{\infty} \paren{\frac{b}{2} -
        \frac{1}{2\sigma+2} B_\sigma^{2\sigma}} B_\sigma^{2\sigma}
    \end{split}
  \end{equation}
  The final approximation is due to the integral over
  $(a^{-1/3}, \infty)$ of the approximate density being
  $\ll \sqrt{\eps}$.  Thus we include it for the convenience of
  analytical integration.

  Using these approximations, we expand $I_2$ as
  \begin{equation*}
    I_2 \approx  I_2(\sigma, \eps)\left.\right|_{\sigma=1} + (\sigma-1) \frac{\partial I_2}{\partial \sigma} (\sigma, \eps)  \left.\right|_{\sigma=1} 
  \end{equation*}
  By direct integration,
  \begin{equation}\label{e:H3_1term}
    I_2(\sigma, \eps)\left. \right|_{\sigma = 1} = \int\limits_{0}^{\infty} \left( \frac{b}{2} - \frac{B_1^2}{4} \right) B_1^2 d\xi = -\sqrt{4-b^2} \approx -2 \sqrt{\eps}.
  \end{equation}
  The second term in the expansion is given by 
    \begin{align}
      \left. \frac{\partial I_2}{\partial \sigma} \right|_{\sigma=1} \approx \int_0^\infty \left. \frac{\partial}{\partial \sigma} \bracket{\paren{\frac{b}{2} - \frac{B_\sigma^{2\sigma}}{2\sigma+2}} B_\sigma^2} \right|_{\sigma=1} d\xi.
    \end{align}
    To approximate this integral, we first claim that the main
    contribution comes from
    $\xi < \frac{1}{\sqrt{4-b^2}} \approx \frac{1}{2\sqrt{\eps}}$. To
    see this, we note that the bright soliton $B_\sigma$ is a function
    of $u = \sigma \sqrt{4-b^2} \xi \approx 2\sqrt{\eps}\xi$ and take
    great care when differentiating with respect to $\sigma$ under the
    integral sign. To wit, we split the integral into 2 parts at
    $\xi_0 = \frac{1}{\sigma \sqrt{4-b^2}}$ and write
    \begin{equation}
      \begin{split}
        \left. \frac{\partial I_2}{\partial \sigma} \right|_{\sigma=1}
        &= \underbrace{\int_0^{\xi_0} \left. \frac{\partial}{\partial
              \sigma} \bracket{\paren{\frac{b}{2} -
                \frac{B_\sigma^{2\sigma}}{2\sigma+2}} B_\sigma^2}
          \right|_{\sigma=1}d\xi }_{\equiv I_{2,1} }\\
        &\quad + \underbrace{\xi_0^{-1} \int_1^\infty
          \left. \frac{\partial}{\partial \sigma}
            \bracket{\paren{\frac{b}{2} -
                \frac{B_\sigma^{2\sigma}}{2\sigma+2}} B_\sigma^2}
          \right|_{\sigma=1} du }_{\equiv I_{2,2}}.
      \end{split}
    \end{equation}
    We now observe that the second integral, $I_{2,2}$, tends to zero
    as $\sigma \to 1$ while the first tends to a finite value. Indeed,
    via direct computation, we obtain
    \begin{equation}
      \label{e:InterOrderTerm}
      \begin{split}
        I_{2,2} &\approx \frac{1}{2\sqrt{\eps}} \int_1^\infty \paren{\frac{b}{2} - \frac{B_1^2}{4}} \paren{\frac{1}{2} B_1^2 - 2 B_1^2 \log B_1} du \\
        &\approx \frac{1}{\sqrt{\eps}} \paren{c_1 \eps \log \eps + c_2 \eps + O(\eps^2)} \\
        &\approx c_1 \sqrt{\eps} \log \eps + c_2 \sqrt{\eps}
      \end{split}
    \end{equation}
    where $B_1$ is the bright soliton with $\sigma = 1$ and
    $c_1 \approx -2.33$ and $c_2 \approx -0.66$ are constant
    values. Under the Ansatz that $\eps$ behaves as a power law in
    $\paren{\sigma-1}$, $I_{2,2}$ tends slowly to zero. For $I_{2,1}$,
    the bright soliton nearly coincides with the lump and we have by
    direct computation
    \begin{equation}
      \label{e:H3_2term}
      \begin{split}
        I_{2,1} &\approx \int_0^{1/{2\sqrt{\eps}}} \left\{\frac{B_1^6 \xi \sinh\sqrt{4-b^2}\xi}{4\sqrt{4-b^2}} \right. \\
        &\quad\quad \left. + \paren{\frac{b}{2} -
            \frac{B_1^2}{4}} \paren{-2 B_1^2 \ln B_1 + \frac{B_1^2}{2}
            -\frac{B_1^4 \xi \sinh \sqrt{4-b^2} \xi}{\sqrt{4-b^2}}}
        \right\}d\xi.  \\
        &\approx \int_0^\infty \set{ \frac{\xi^2 L_1^6}{4} + \left(1- \frac{L_1^2}{4}\right) \left(-2 L_1^2 \ln L_1 + \frac{L_1^2}{2} - \xi^2 L_1^4\right)} d\xi \\
        &= 2\pi,
      \end{split}
    \end{equation}
    where we take the limit $\eps \to 0$ in the penultimate step.
   Using \eqref{e:H3_1term} and \eqref{e:H3_2term}, we conclude that
  \begin{equation}\label{R3_final}
    I_2 \approx -2 \sqrt{\eps} + 2\pi (\sigma -1), \quad \sigma \to 1.
  \end{equation}

  {\it Region 3:} Near the origin, for $\xi < 0$, the amplitude is
  well approximated by the bright soliton.  However, as we approach
  $\xi_-$, the linear term $(a\xi + \eps) P$ in \eqref{PEquation}
  becomes less relevant and the lump soliton becomes a better
  approximation . We thus subdivide the integral $I_3$ into the
  regions $-\frac{1}{2\sqrt{\eps}} < \xi < 0$ and
  $-\frac{\eps}{a} < \xi < -\frac{1}{2\sqrt{\eps}}$.

\begin{figure}[h]
  \centerline{\includegraphics[height=8cm]{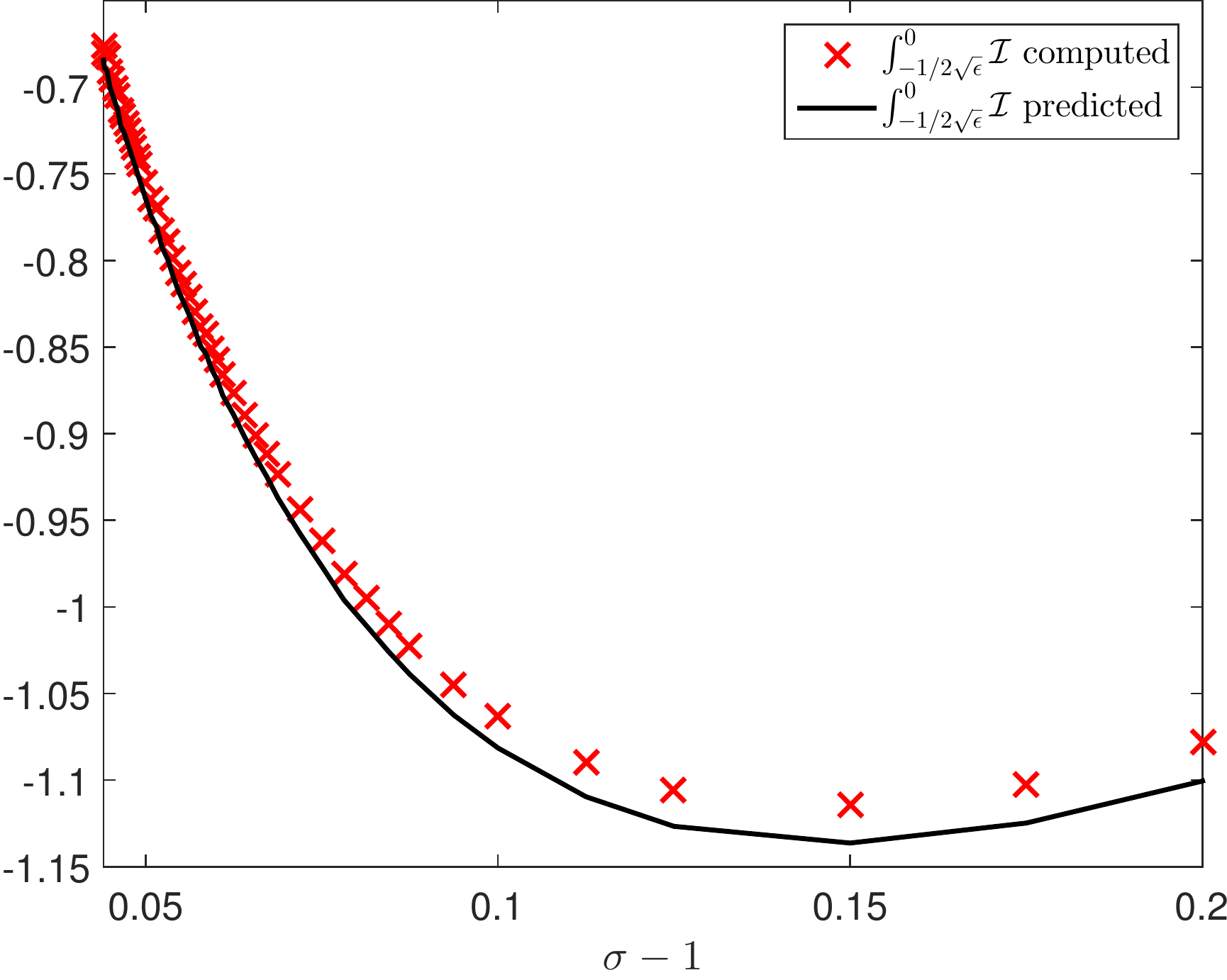}}
  \caption{A comparison of the contribution to the momentum $P$ of the
    region $[-\frac{1}{2\sqrt{\eps}},0]$ from the numerical solution
    to that predicted by the bright soliton approximation.}
  \label{fig:Bright_neg_validity}
\end{figure}

Figure \ref{fig:Bright_neg_validity} compares the contribution to the
momentum, over the interval $-\frac{1}{2\sqrt{\eps}} < \eps <0$,
between the numerical solution and approximation
\eqref{R3_approximations}, using the bright soliton. We find that they
are in good agreement over a range of values of $\sigma$ with a
relative error of less than 2\%. Therefore we approximate the
contribution of this interval by calculating
$\int_{-\frac{1}{2\sqrt{\eps}}}^{0} \Id(B_\sigma)$ to leading
order. When $-\frac{1}{2\sqrt{\eps}} < \xi <0$, we expand the bright
soliton near $\sigma =1,\eps =0$ as

\begin{align}\label{B_expansion_R2}
  B_\sigma (\xi) = L_1(\xi) + (\sigma-1)f_1(\xi) + \eps f_2(\xi)
\end{align}
where $L_1$ is the lump soliton ($\sigma=1$) and $f_1$, $f_2$ are
given by
\begin{equation}
  \label{A1_A2}
  \begin{split}
    f_1(\xi) &= -\frac{1}{\sqrt{2}}\left(\frac{1}{4 \xi^2+1}\right)^{3/2} \bracket{12 \xi^2+\left(8 \xi^2+2\right) \log \left(\frac{8}{4 \xi^2+1}\right)-1}, \\
    f_2(\xi) &= -\frac{16 \xi^4+3}{6 \sqrt{2} \left(4
        \xi^2+1\right)^{3/2}}.
  \end{split}
\end{equation}
The contribution of this interval to $I_3$ becomes
\[
\int\limits_{-\frac{1}{2\sqrt{\eps}}}^{0} \Id(B_\sigma) \approx
\int\limits_{-\frac{1}{2\sqrt{\eps}}}^{0}\Id_0 + (\sigma-1)
\int\limits_{-\frac{1}{2\sqrt{\eps}}}^{0} \Id_1 +
\eps\int\limits_{-\frac{1}{2\sqrt{\eps}}}^{0} \Id_2
\]
where the integrands are computed at leading order using
\eqref{B_expansion_R2}:
\begin{align*}
  \Id_0 &= \left(1-\frac{L_1^2}{4}\right) L_1^2 \\
  \Id_1 &=2 L_1 f_1 \left(1-\frac{L_1^2}{4}\right) - \frac{1}{2} \left(L_1 f_1 + L_1^2 \ln L_1 - \frac{L_1^2}{4}\right) L_1^2 \\
  \Id_2 &= 2 f_2 L_1 \left(1- \frac{L_1^2}{4}\right) - \frac{1}{2} \left(f_2 L_1 +1\right) L_1^2. 
\end{align*}
Using Mathematica, we find:
\begin{align}
  \int_{-\frac{1}{2\sqrt{\eps}}}^{0}\Id_0  \approx -4\sqrt{\eps}, \quad \int_{-\frac{1}{2\sqrt{\eps}}}^{0}\Id_1 \approx 2\pi, \quad \int_{-\frac{1}{2\sqrt{\eps}}}^{0}\Id_2 \approx -\frac{1}{3\sqrt{\eps}}.
\end{align}  
To summarize, we have
\[
\int_{-\frac{1}{2\sqrt{\eps}}}^0 \Id(B_\sigma) \approx -
\frac{13}{3}\sqrt{\eps} + 2\pi(\sigma-1).
\]

\begin{figure}[h]
  \centerline{\includegraphics[height=8cm]{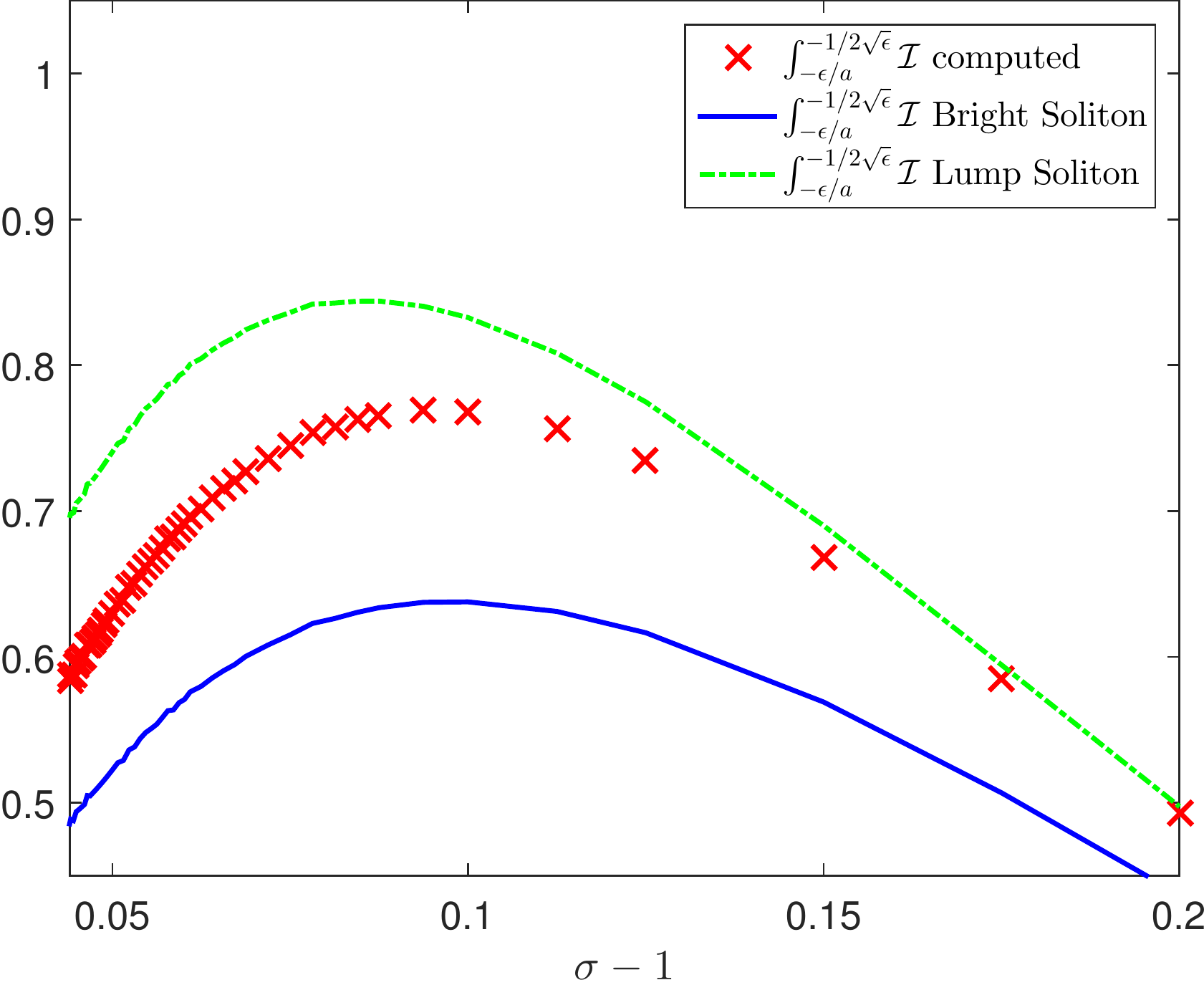}}
  \caption{Comparison of the contribution to the momentum of the
    region $[-\frac{\eps}{a}, -\frac{1}{2\sqrt{\eps}}]$ obtained in
    the simulation with the same quantity using the bright soliton and
    using the lump soliton approximations.}
  \label{fig:R2_Lump_Bright}
\end{figure}

For $-\frac{\eps}{a} < \xi < -\frac{1}{2\sqrt{\eps}}$, the bright
soliton is not a valid approximation of the amplitude. Although we do
not have a precise analytic expression of $Q$ throughout this region,
we observe, as illustrated in Figure \ref{fig:R2_Lump_Bright}, that
\[
\int_{-\frac{\eps}{a}}^{-\frac{1}{2\sqrt{\eps}}} \Id(L_\sigma) >
\int_{-\frac{\eps}{a}}^{-\frac{1}{2\sqrt{\eps}}}\Id >
\int_{-\frac{\eps}{a}}^{-\frac{1}{2\sqrt{\eps}}} \Id(B_\sigma).
\]
When $\sigma \to 1$, $L_\sigma \approx L_1 + (\sigma -1)f_1$ with
$f_1$ as in \eqref{A1_A2} and
\[
\int_{-\frac{\eps}{a}}^{-\frac{1}{2\sqrt{\eps}}} \Id(L_\sigma) \approx
4\sqrt{\eps}.
\]

On the other hand,
$\int_{-\frac{\eps}{a}}^{-\frac{1}{2\sqrt{\eps}}} \Id(B_\sigma)$ can
be estimated by the same methods as in Region 2.  Indeed, recognizing
that the density $\Id(B_\sigma)$ is even in $\xi$,
\[
\int_{-\frac{\eps}{a}}^0 \Id(B_\sigma) = \int_0^{\frac{\eps}{a}}
\Id(B_\sigma) \approx \int_0^{\infty} \Id(B_\sigma)\approx 2\pi
(\sigma-1) - 2\sqrt{\eps}.
\]
Combining the bounds and estimates for the two pieces of Region 3,
\[
\int_{-\frac{\eps}{a}}^{-\frac{1}{2\sqrt{\eps}}} \Id(B_\sigma)\approx
\frac{7\sqrt{\eps}}{3},
\]
which gives us
\begin{align}\label{R2_final}
  I_2 \sim 2\pi(\sigma-1) - \mu  \sqrt{\eps}
\end{align}
where $\mu$ is some constant satisfying $\frac{1}{3} < \mu < 2$.

Combining the 3 regions, namely \eqref{R1_final}, and \eqref{R3_final}
and \eqref{R2_final}, we find that
\[
I(Q) \sim 8\pi(\sigma-1) - (\mu +2) \sqrt{\eps}.
\]
We now impose the vanishing momentum condition to get the main result
of this section:
\begin{align}\label{e:MMMainresult}
  \sqrt{\eps} \sim \frac{8 \pi}{\mu + 2} (\sigma-1), \quad 1/3 < \mu < 2.
\end{align}
We are unable to numerically check the leading order behaviour of
$\eps$ in Eq. \eqref{e:MMMainresult} directly because we have not
reached values of $\sigma$ sufficiently close to $1$ to ignore higher
order corrective terms on the right hand side.  Indeed, as
  exhibited in \eqref{e:InterOrderTerm}, the corrections may be of
  intermediate order, and they significantly affect the numerics. For
  this reason, we include a term of the form
  $f(\sigma) \sim \paren{\sigma-1}^2 \log \paren{ \sigma-1 }$ in the
  right hand side of \eqref{e:MMMainresult} and make the Ansatz 
\begin{align}\label{e:NLLSQ_sqrteps}
  \sqrt{\eps} = \paren{C_0 + C_1 \paren{\sigma-1} \log \paren{\sigma-1}} \paren{\sigma-1}^\alpha.
\end{align}
We then use a nonlinear least squares algorithm to calculate
$C_0$, $C_1$ and $\alpha$.  We find that $C_0 \approx 8$,
$C_1 \approx 15$ and $\alpha \approx 1$ (the latter being the result
derived in \eqref{e:MMMainresult} analytically). The value
$C_0 \approx 8$ corresponds to $\mu \approx 1.1$ in
\eqref{e:MMMainresult}, well within the predicted range. Figure
\ref{fig:SqrtEps_GoF} illustrates the goodness of the fit of
\eqref{e:MMMainresult} for $\sigma \in \bracket{1.035, 1.1}$ and
values of $C_0$, $C_1$, and $\alpha$ obtained by a least square
analysis of Richardson extrapolation of $\eps$ values from
computations using $N = 2.56 \times 10^6$ and $N = 5.12 \times 10^6$
mesh points. To check the validity of the obtained values we proceed
as for the model \eqref{e:TP_Model}: we restrict the values of
$\sigma$ considered in the least square analysis to
$\sigma \in \bracket{1.044, \sigma_{max}}$ and vary $\sigma_{max}$.
We also use results from computations performed at different
resolutions and report the values obtained in Table
\ref{tab:sqrteps_params}. In the worst case, we observe a relative
difference between the obtained values on the order of $4\%$.

\begin{figure}[h]
  \includegraphics[height=8cm]{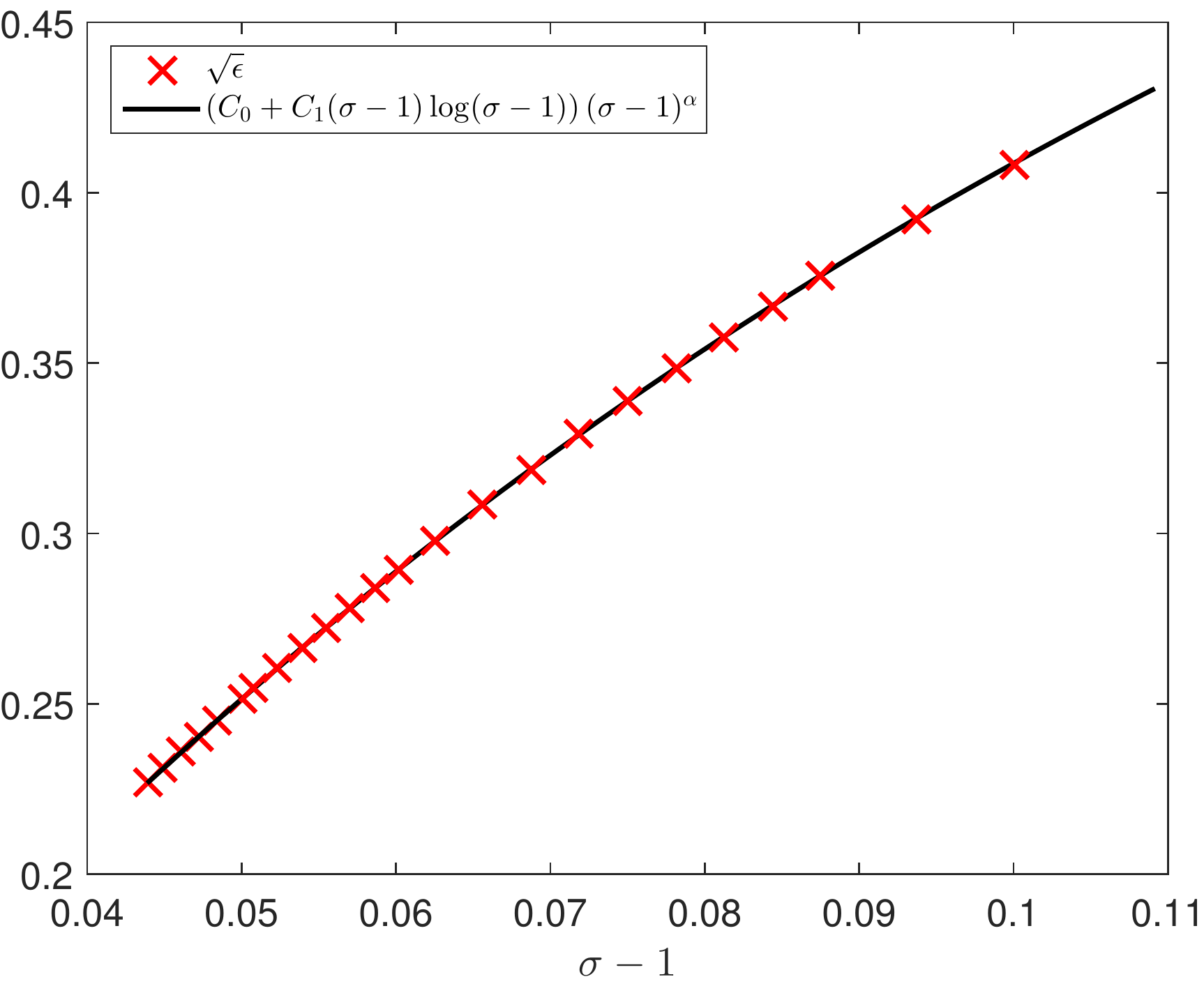}
  \caption{Numerical verification of model \eqref{e:NLLSQ_sqrteps}
    using a least square computation to find the parameters
    $C_0$,
    $C_1$,
    and $\alpha$.
    We find that $C_0
    \approx 7.915$, $C_1 \approx 14.859$ and $\alpha \approx 1.041$. }
  \label{fig:SqrtEps_GoF}
\end{figure}

\begin{table}[h]
  \caption{A table of computed values for the parameters $\alpha$, $C_0$ and $C_1$ in \eqref{e:NLLSQ_sqrteps}. Left: using simulations with $N = 5.12 \times 10^6$ and $N = 2.56 \times 10^6$ mesh points. Right: using simulations with $N = 1.28 \times 10^6$ and $N = 2.56 \times 10^6$ mesh points. 
  }
  \label{tab:sqrteps_params}

  \centering
  \pgfplotstableset{% global config, for example in the preamble
    % these columns/<colname>/.style={<options>} things define a style
    % which applies to <colname> only.
    every head row/.style={before row=\toprule,after row=\midrule},
    every last row/.style={after row=\bottomrule}
  }
  \pgfplotstabletypeset[% local config, applies only for this table
  1000 sep={\,},
  columns/smax/.style={
    fixed,fixed zerofill,precision=3,
    column type=r, column name = {$\sigma_{max}$}
  },
  columns/alpha/.style={
    fixed,fixed zerofill,precision=3,
    column type=r, column name = {$\alpha$}
  },
  columns/c1/.style={
    fixed,fixed zerofill,precision=3,
    column type=r, column name = {$C_0$}
  },columns/c2/.style={
    fixed,fixed zerofill,precision=3,
    column type=r, column name = {$C_1$}
  }
  ]
  {figs/RootEpsParam.txt} \hspace{0.5cm} \pgfplotstabletypeset[% local config, applies only for this table
  1000 sep={\,},
  columns/smax/.style={
    fixed,fixed zerofill,precision=3,
    column type=r, column name = {$\sigma_{max}$}
  },
  columns/alpha/.style={
    fixed,fixed zerofill,precision=3,
    column type=r, column name = {$\alpha$}
  },
  columns/c1/.style={
    fixed,fixed zerofill,precision=3,
    column type=r, column name = {$C_0$}
  },columns/c2/.style={
    fixed,fixed zerofill,precision=3,
    column type=r, column name = {$C_1$}
  }
  ]
  {figs/RootEpsParam128_256.txt}

  \vspace{0.3cm} 

\end{table}

\end{proof}

{\it Remark:} It is possible to prove Proposition
\ref{prop:Hamiltonian} by considering the condition $H(Q) = 0$ (see
\eqref{ZeroEnergyQ} in place of $P(Q) = 0$. Using the same analysis and separation of the
domain, we obtain $H_1 \sim 2\pi (\sigma-1)$,
$H_2 \sim 3\pi(\sigma-1) - 2 \sqrt{\eps}$ and
$H_3 \sim 3\pi (\sigma-1) - \mu \sqrt{\eps}$. We omit the details of
this calculation as the Hamiltonian density is a more complicated
object while the final result \eqref{e:sqrtepsresult} is unchanged.

\section{Discussion and further remarks}

Combining Proposition \ref{prop:Hamiltonian} and the numerical fit
\eqref{e:TP_Model} gives us the central result of this study: that in the limit
$\sigma \to 1$, $a(\sigma)$ behaves as a power law with respect to the
distance to criticality $(\sigma-1)$ (that is
$a \sim \paren{\sigma-1}^\alpha$ with $\alpha \approx 3.2$), while the
amplitude of the blow-up profile tends to the lump soliton of DNLS. In
the course of the analysis, we have made several assumptions on the
relative behaviours of $a$ and $\epsilon$ that we now check a
posteriori. We have assumed that $a^{2/3} \ll \epsilon$,
$\frac{a}{\epsilon} \ll \sigma - 1$ and $\epsilon \ll (\sigma -1)$,
all consistent with our final result.

In some cases, we did not check the asymptotic relations directly
against our numerical simulation because although we reach values of
$\sigma$ as low as $\sigma = 1.044$, we cannot ignore some of the higher
order corrections.  For example, in Propositions
\ref{prop:APlusRelation}, \ref{prop:psi(0)} and \ref{prop:Aneg}, we
derived the form of the parameters $A_+$, $\psi(0)$ and $A_-$ (through
equations \eqref{eqn:AplusFinalRelation}, \eqref{eqn:psi0Form_with_k}
and \eqref{eqn:AminFormWith_k_and_l}) that have more than leading
order precision and we find excellent agreement with the numerical
simulations. In Section \ref{sect:TPNumerics} however, we were
restricted to a heuristic discussion in a neighborhood of the turning
point $\xi_- = -\frac{\eps}{a}$. The behaviour of the profile here is
a result of a delicate balance of linear and non-linear terms in
\eqref{e:TPNum_LNLBalance} and its precise analytic description
remains an open problem. Finally, in Proposition
\ref{prop:Hamiltonian}, we estimated the integral constraint
$P = \int \theta_\xi A^2 =0$ using the approximations of the preceding
sections. We use the DNLS solitons \eqref{LumpSigma} and
\eqref{BrightSigma} to bound explicitly above and below the integral
over the neighborhood of $\xi_-$ where nonlinear effects are
important. We obtain relation \eqref{e:sqrtepsresult}; however, the
constant of proportionality is not known precisely.

\appendix

\section{Details of the Asymptotic Expansion}
\label{App:Asymptotics}
This Appendix contains the proof of Proposition \ref{prop:Asymptotics}
which is a slight extension of Proposition 4.1 in \cite{Liu:2013ej}.
We decompose the blowup profile as $Q = XZ$, where $X$ is a phase term
chosen to remove linear terms in $Z_\xi$.  Let
\begin{equation}
  X(\xi)  = \exp\set{-i \left(\frac{a\xi^2}{4} - \frac{b\xi}{2} + \frac{1}{2}
      \int_{0}^{\xi} |Z(\xi')|^{2\sigma} d\xi'\right)},
\end{equation}
$Z$ satisfies
\begin{equation}
  \label{eqn:ZEqn}
  Z_{\xi\xi} + \left(\frac{1}{4} \left(a\xi - b \right)^2 +
    \frac{1}{2}(a\xi -b)|Z|^{2\sigma} + \frac{|Z|^{4\sigma}}{4}-1 - i
    \frac{a(\sigma-1)}{2\sigma} -
    \frac{i}{2}\left(|Z|^{2\sigma}\right)_\xi\right)Z = 0 .
\end{equation}
Decomposing $Z$ into phase and amplitude, $Z=A e^{i\phi}$, gives
\begin{gather}
  \label{eqn:ZAmpEqn}
  \frac{A_{\xi\xi}}{A} - \phi_\xi^2 - 1 + \frac{1}{4}(a\xi - b)^2 +
  \frac{1}{2}(a\xi - b)A^{2\sigma} + \frac{A^{4\sigma}}{4} =0\\
  \label{eqn:ZPhaseEqn}
  \phi_{\xi\xi} + 2 \frac{A_{\xi}}{A} \phi_\xi -
  \frac{a(\sigma-1)}{2\sigma} A - \frac{1}{2}
  \left(A^{2\sigma}\right)_\xi = 0 .
\end{gather}
Let $\theta \equiv \phi_\xi$.  Following \cite{Liu:2013ej}, we now
assume that, as $\xi\to \pm \infty$,
\begin{align}
  \label{e:theta_approx}
  \theta(\xi) &= \frac{a\xi-b}{2} - \frac{1}{a\xi} -
                \frac{b^2}{a^2\xi^2}
                + \frac{1}{2}A^{2\sigma} + \gamma(\xi), \quad \gamma(\xi) = \bigo(\xi^{-3}),\\
  \label{e:A_approx}
  A(\xi) & = A_\pm \abs{\xi}^{-\frac{1}{2\sigma}} \paren{1 +
           \frac{b}{2a \sigma \xi} + \nu(\xi)}, \quad \nu(\xi) =
           \bigo(\xi^{-2}).
\end{align}
$\gamma(\xi)$ and $\nu(\xi)$ are corrections to the terms explicitly
written.  While we have made an assumption as to their order as
$\xi \to \pm \infty$, they remain undetermined at this point.  We will
also assume that they are smooth, and their derivatives obey
\[
\gamma^{(n)}(\xi) =\bigo(\xi^{-3-n}), \quad \nu^{(n)}(\xi) =
\bigo(\xi^{-2-n}).
\]

We substitute \eqref{e:theta_approx} and \eqref{e:A_approx} into
\eqref{eqn:ZAmpEqn} and \eqref{eqn:ZPhaseEqn}. We must show that the
corrections, with the assumed orders, are consistent; there must be
other terms in the equations which can balance them.  Then, in
principle, we could successively solve for the next correction.  One
subtlety is that in \eqref{e:theta_approx} and in the terms
$A^{2\sigma}$ and $A^{4\sigma}$ in \eqref{eqn:ZAmpEqn}, we will not
immediately make use of \eqref{e:A_approx}.  The reason for this is
that a number of the terms cancel exactly, leading to simpler
equations.  For the amplitude equation, \eqref{eqn:ZAmpEqn}, we obtain
\begin{equation} \label{A5} \underbrace{\frac{A''}{A} -
    \frac{b^2}{a^2\xi^2}- \frac{1}{a^2\xi^2} + \frac{1}{a\xi}
    A^{2\sigma}}_{\bigo(\xi^{-2})} - a\xi \gamma(\xi)=
  \bigo(\xi^{-3}).
\end{equation}
One can check that the indicated terms are of order $\xi^{-2}$.  Since
we have assumed that $\gamma(\xi) = \bigo(\xi^{-3})$,
$a\xi \gamma(\xi)$ will be $\bigo(\xi^{-2})$, and thus it is
consistent.  We could obtain the leading order $\xi^{-3}$ term in
$\gamma$, but we do not pursue this.  The right-hand-side of \eqref{A5}
contains a number of terms that can be checked to be of order at least
$\xi^{-3}$.

Turning to \eqref{eqn:ZPhaseEqn}, we will explicitly retain all terms
of order $\xi^{-2}$, and verify that $\nu(\xi)$ appears at the correct
order.  We first expand $A_\xi/A$ using \eqref{e:A_approx}, to obtain
\begin{equation*}
  \frac{A_\xi}{A} = -\frac{1}{2\sigma \xi} - \frac{b}{2a\sigma \xi^2} +
  \frac{b^2}{4 a^2 \sigma^2 \xi^3} + \nu_\xi(\xi) + \bigo(\xi^{-4}).
\end{equation*}
Under our assumption on $\nu$, $\nu_\xi$ is of order $\xi^{-3}$. Then,
substituting in the above expression into \eqref{eqn:ZPhaseEqn}
\begin{equation}\label{A6}
  \underbrace{\frac{1}{a\xi^2} + \frac{b^2}{4 a \sigma^2 \xi^2} + \frac{1}{a\sigma
      \xi^2} + \frac{b^2}{2a \sigma \xi^2} - \frac{1}{2\sigma\xi
    }A^{2\sigma}}_{\bigo(\xi^{-2})}  + a\xi \nu_\xi(\xi) = \bigo(\xi^{-3}).
\end{equation}
The indicated terms on the left-hand-side of \eqref{A6} are all of
order $\xi^{-2}$.  Under the assumption on $\nu$ and its derivatives,
$a\xi \nu_\xi(\xi)$ is also $\bigo(\xi^{-2})$.  Thus, we have a
consistent expansion, and the leading order $\xi^{-2}$ term in
$\nu(\xi)$ could be obtained if needed.  Again, one can check that the
omitted terms in the expansion are all $\bigo(\xi^{-3})$, and have
been put on the right-hand-side of this last equation.  Returning to
the $Q$ variable, we have \eqref{eqn:Asymptotics2ndOrder}:
\begin{equation*}
  Q \approx A_\pm |\xi|^{\frac{ -1}{2\sigma}} \left(1 +
    \frac{b}{2\sigma a \xi}\right) \exp\set{\frac{ -i}{a}\ln |\xi| +
    \frac{i b }{a^2\xi} },
\end{equation*}
and the corrections in the phase and amplitude are at
$\bigo(\xi^{-2})$.

\section{Details of the Numerical Methods}
\label{a:numerics}

Here, we report details of our numerical scheme for solving
\eqref{QProfile}.

\subsection{Far Field Boundary Conditions}

To numerically solve \eqref{QProfile}, we restrict to the domain
$[-\ximax,\ximax]$, and impose approximate boundary conditions at
$\pm \ximax$.  For $\ximax$ large enough, $Q$ is approximated by
\eqref{eqn:Asymptotics2ndOrder} (see Proposition
\ref{prop:Asymptotics}).  This allows us to write linear Robin
conditions.  Writing $Q$ in terms of its amplitude and phase,
$Q = A e^{i \phi}$, and also in terms of its real and imaginary parts
$Q = u + i v$.  Then
\begin{equation} \label{e:phixi} \phi_\xi = \frac{-v u_\xi + u
    v_\xi}{u^2 + v^2}, \; \; A_\xi = \frac{u u_\xi + v
    v_\xi}{\sqrt{u^2+v^2}}.
\end{equation}
Using \eqref{eqn:Asymptotics2ndOrder}, we have that, as
$\xi \to \pm\infty$,
\begin{equation}
  \phi_\xi  \approx -\frac{1}{a\xi} - \frac{b}{ a^2 \xi^2}, \; \;
  \frac{A_\xi}{A}  \approx -\frac{1}{2\sigma \xi}-
  \frac{b}{2a\sigma \xi^2}.
\end{equation}
Defining
$\alpha(\xi) \equiv \frac{1}{2\sigma \xi}+ \frac{b}{2a\sigma
  \xi^2},\;\; \beta(\xi) \equiv \frac{1}{a\xi} + \frac{b}{ a^2 \xi^2}$
and substituting in \eqref{e:phixi} we obtain for large $|\xi|$,
\begin{equation}
  \label{e:farfield_bc}
  u_\xi + \alpha(\xi) u - \beta(\xi) v \approx 0, \;\;
  v_\xi +\beta(\xi) u + \alpha(\xi) v\approx 0,
\end{equation}
and thus the boundary conditions at $\pm \ximax$.

\subsection{Rescaling of the domain}

As seen in Section \ref{sect:Observations}, the turning points of
\eqref{QProfile} are located at $\xi_-=-\eps/a$ and
$\xi_+=(4-\eps)/a$.  In order to be in the asymptotically linear
regime where \eqref{e:farfield_bc} is valid, we need $\ximax$ to
exceed $\abs{\xi_\pm}$.  This presents a problem numerically, since
$\xi_\pm \to \pm \infty$, as $\sigma\to 1$.  We thus rescale the
domain, so that the turning points, in the rescaled coordinate system
remain in a fixed domain.  Setting $x= a\xi$, \eqref{QProfile} becomes
\begin{equation}
  \label{e:Qrescaled}
  a^2 Q_{xx} - Q + ia \paren{\tfrac{1}{2\sigma} Q + xQ_x} - i a b Q_x +
  i a \abs{Q}^{2\sigma} Q_x=0
\end{equation}
and boundary conditions \eqref{e:farfield_bc}, evaluated at $\xmax$,
are
\begin{equation} \label{e:Qfarfield_rescaled} 0 = u_x + \alpha(x) u -
  \beta(x) v,\,\; 0 = v_x + \beta(x) u +\alpha(x) v,
\end{equation}
with
$ \alpha(x) \equiv\frac{1}{2\sigma x} + \frac{b}{2\sigma x^2}, \;
\beta(x) \equiv\frac{1}{ax} + \frac{b}{ax^2}$.
In these coordinates, the turning points are at $x_- = -\eps$ and
$x_+=4-\eps$. Eq. \eqref{e:Qrescaled} is singular as $\sigma\to 1$
since $a\to 0$.  However, we find this to be more effective, as it
allows us to compute on a domain of fixed size for all values of
$\sigma$.

\subsection{Numerical Implementation of the Boundary Value Problem}
\label{SectSetupBVP}
We solve for $Q$ using the default Newton solver in
\cite{petsc-efficient,petsc-web-page}, along with a
sparse direct linear solver.  Due to the
condition that the maximum of the profile occurs at the origin, an
interior point of $(-\xmax, \xmax)$, we introduce the variable
$W(x) = Q(-x)$, and study $Q$ and $W$ on $(0,\xmax)$, with $Q$ and
$W$ coupled by a continuity condition at the origin.  $W$ then solves
the equation
\begin{equation}
  \label{e:Wrescaled2}
  a^2 W_{xx} - W + ia \paren{\tfrac{1}{2\sigma} W + xW_x} + i a b W_x -i a \abs{W}^{2\sigma} W_x=0.
\end{equation}
Setting $W = f + i g$, the boundary conditions
\eqref{e:Qfarfield_rescaled} are
\begin{align*}
  0 & = -f_x + \alpha(-\xmax) f - \beta(-\xmax) g,\\
  0 & = -g_x + \beta(-\xmax) f +\alpha(-\xmax) g.
\end{align*}
We now solve these equations on a uniform mesh of $N+1$ mesh points on
$[0, \xmax]$ to obtain $(u_j, v_j, f_j, g_j)_{j=0}^{N}$ along with $a$
and $b$. First and
second derivatives are approximated by second order centered finite
differences.  For instance, the real part of \eqref{e:Qrescaled} becomes
\begin{equation}
\label{e:Qdxre}
\begin{split}
&\tfrac{a^2}{\Delta x^2} \left( u_{j+1} - 2 u_j  + u_{j-1}\right) -
u_j  \\
&\quad +a\left( \tfrac{1}{2\sigma} v_j + (x_j-b + (u_j^2 + v_j^2)^{\sigma}) \tfrac{1}{2\Delta
    x}(v_{j+1} - v_{j-1}) \right) = 0
\end{split}
\end{equation}
After an analogous discretization, the farfield boundary conditions provide the needed values of
$(u_{N+1}, v_{N+1}, f_{N+1}, g_{N+1})$ for evaluating equations like
\eqref{e:Qdxre} at $j = N$.  
 
In addition to the farfield conditions, we impose symmetry and
anti-symmetry conditions at the origin,
\begin{equation*}
u_{-1} = u_1, \quad v_{-1} =g_1, \quad f_{-1} = f_{1}, \quad g_{-1} = v_{1}
\end{equation*}
the auxiliary continuity conditions at the origin,
\begin{equation*}
u_0 = f_0, \quad v_0 = g_0,
\end{equation*}
and the zero phase condition, $v_0=0$.

This system of $4 \times (N+1) + 2$ unknowns  is then solved with
$\xmax = 25$.  Solutions with $N = 1.28\times 10^6, 2.56\times
10^6, 5.12 \times 10^6$,  were obtained. As a convergence criterion,
we sought to ensure that we had good pointwise relative error,
terminating when
\begin{gather}
\abs{u_0-f_0}\leq {\rm TOL}, \quad \abs{v_0-g_0}\leq {\rm TOL},
\frac{\abs{\text{\eqref{e:Qdxre}}} }{\abs{u_j+i v_j}}\leq {\rm TOL},
\end{gather}
and an analogous equation for the imaginary counterpart to
\eqref{e:Qdxre}, along with the $f_j$ and $g_j$ equations.  We solved
with ${\rm TOL}=10^{-6}$. We compared our results
against those obtained using {\tt BVP\_SOLVER-2}, a successor to {\tt
  BVP\_SOLVER}, \cite{Shampline:2006aa, Boisvert:2012aa}.  They were
found to be in agreement, but we found that {\tt BVP\_SOLVER-2} was
unable to solve for values of $\sigma$ below 1.07, motivating us to
switch algorithms.

\subsection{Continuation method} 
As is the case for all Newton solvers, it is essential to   provide a good initial guess.  We
use the solution obtained from the time integration of gDNLS
$\sigma = 2$, and perform a continuation in $\sigma$, to solve for $Q$
at smaller values.  The initial guess for $\sigma=2$ was constructed
using the dynamic rescaling, \cite{Liu:2013ej}.  Next, we construct a
decreasing sequence of values of $\sigma$,
$ 2=\sigma_0 > \sigma_1> \ldots >\sigma_j>\ldots$,  using the
solution at $\sigma_{j-1}$ as the starting guess for solving the
solution at $\sigma_j$.  Starting with $\Delta \sigma = 0.2$, we reduce
$\sigma$ by $\Delta \sigma$,  halving the size of $\Delta \sigma$ when
the Newton solver fails.   Below $\sigma = 1.1$, the largest value of
$\Delta \sigma = 0.0125$, and for values close
to the smallest resported value of $\sigma=1.044$, $\Delta \sigma =
0.00078125$.  

\subsection{Richardson Extrapolation}

Since this discretization is second order, we contend that
quantities such as $a$ and $b$ should be $\bigo(\Delta x^2)$.  Thus,
we improve upon our results at different resolutions via Richardson
extrapolation, {\it i.e.},
%\begin{equation}
$a^{{\rm Rich.}}(\sigma) = \tfrac{4}{3}a^{\Delta x/2}(\sigma) -\tfrac{1}{3}a^{\Delta x}(\sigma).$
%\end{equation}
This requires having values of the desired quantities available at the
same values of $\sigma$.  We use cubic spline interpolation to obtain
values on common $\sigma$ values.

\subsection{Limitations}

One limitation we found to our  numerical calculations is due to the singular nature
of the equation.  Recall that we expect $a\to 0$ as $\sigma
\to 1$.  Since $a$ corresponds to a length scale in \eqref{e:Qrescaled}, we should have $\Delta x \ll
a$.  Thus, as $\sigma$ tends to 1 and $a$ tends to zero, a
consistent numerical discretization requires ever smaller values of
$\Delta x$.  This limited us to values of $\sigma$ near 1.04.  Also,
as shown in Figure \ref{fig:AmpPlusMultiTurning}, computed values of
$Q$, in the tail, are reaching the limit of double precision
floating point, as the values are smaller than $10^{-300}$.

\bibliographystyle{plain}

\bibliography{dnls_refs}

\end{document}